\documentclass[12pt,a4paper,oneside]{article}
\usepackage{fancyhdr}
\usepackage[latin1]{inputenc}
\usepackage{amsmath,amsfonts,amssymb,amsthm}
\usepackage{mathrsfs} 
\usepackage{commath} 
\usepackage[left=2cm,right=2cm,top=1.5cm,bottom=1cm,includeheadfoot]{geometry}
\usepackage{bm}
\usepackage[numbers]{natbib}
\usepackage[colorlinks=true, linkcolor=blue,citecolor=green,urlcolor=red]{hyperref}
\usepackage{enumitem}
\newcommand{\R}{\mathbb{R}}
\newcommand{\N}{\mathbb{N}}	
\allowdisplaybreaks
\newtheorem{defi}{Definition}
\newtheorem{thm}[defi]{Theorem}
\newtheorem*{thm*}{Theorem}
\newtheorem{lem}[defi]{Lemma}
\newtheorem{rem}[defi]{Remark}

\newtheorem{prop}[defi]{Proposition}

\begin{document}
\title{On the well-posedness of a class of McKean Feynman-Kac equations}
\author{
{\sc Jonas LIEBER}
\thanks{ENSTA Paris, Institut Polytechnique de Paris,
Unit\'e de Math\'ematiques Appliqu\'ees (UMA) 
and Humboldt-Universit\"at zu Berlin.
 E-mail:{ \tt lieber@uchicago.edu}} 
{\sc,}\ {\sc Nadia OUDJANE}
\thanks{EDF R\&D,   and FiME (Laboratoire de Finance des March\'es de l'Energie
(Dauphine, CREST,  EDF R\&D) www.fime-lab.org). 
E-mail:{\tt  
nadia.oudjane@edf.fr}}
\ {\sc and}\ {\sc Francesco RUSSO} 
\thanks{ENSTA Paris, Institut Polytechnique de Paris,
Unit\'e de Math\'ematiques Appliqu\'ees (UMA). 
 E-mail: {\tt  francesco.russo@ensta-paris.fr}}.
}

\date{October 9th 2019}

\maketitle
\begin{abstract}
\noindent We analyze the well-posedness of a so called McKean
  Feynman-Kac Equation (MFKE), which is a McKean type equation with
a Feynman-Kac perturbation.
We provide  in particular  weak and strong existence conditions as well as 
pathwise uniqueness conditions without  strong regularity assumptions on the
 coefficients.
One major tool to establish this result is a representation theorem relating
 the solutions of MFKE to  the solutions of
a nonconservative semilinear parabolic Partial Differential Equation (PDE).
\end{abstract}

\medskip\noindent {\bf Key words and phrases.}  
McKean Stochastic Differental Equations; Semilinear Partial Differential Equations; McKean Feynman-Kac equation; Probabilistic representation of
 PDEs.

\medskip\noindent  {\bf 2010  AMS-classification}: 60H10; 60H30; 60J60; 
35K58; 35C99.

\oddsidemargin -0.5cm 
\textwidth 6.5in 
\textheight 23cm
\topmargin -2cm 

\maketitle

\section{Introduction}

We discuss in this paper a mean-field type equation of the form

\begin{equation} 
\label{eq:McKean_representation}
\begin{cases}
Y_t=Y_0+\int \limits_0^t \Phi(s,Y_s) \dif W_s+\int\limits_0^t [b_0(s,Y_s)+ b(s,Y_s,u(s,Y_s)) ]\dif s  \quad \forall t \in [0,T],  \\
Y_0 \sim \mathbf{u}_0, \ \textrm{a Borel probability measure on}\ \R^d \\ 
\int\limits_{\mathbb{R}^d} \varphi (x) u(t,x)\dif x = \mathbb{E}\left[\varphi (Y_t) 
\exp\left(\int\limits_0^t \Lambda(s,Y_s,u(s,Y_s))\dif s \right)\right] \quad \forall \varphi \in C_b(\mathbb{R}^d), \forall t \in [0,T], 
\end{cases}
\end{equation}
which we will call \textit{McKean-Feynman-Kac Equation} (MFKE). A solution is given by a couple $(Y_s,u(s,\cdot))_{0\leq s\leq T}$.
 We refer to the first line of \eqref{eq:McKean_representation} as 
{\it Stochastic Differential Equation (SDE)} and to the third line as {\it linking equation}. The denomination McKean is due to the dependence of the drift coefficient in the SDE not only on time and the position of the 
process $Y$ but also on 
a function $u$ which, via the linking equation, is related to the distribution of the process.
In particular, when $\Lambda=0$ in the above equation,  $u(t,\cdot)$ 
coincides with the density of the marginal distribution $\mathcal{L}(Y_t)$. 
In this case, MFKE reduces to a  McKean Stochastic Differential Equation (MSDE), which is in general an SDE whose coefficients depend, not only on time
and position but also on $\mathcal{L}(Y_t)$. 
However, in this paper, we emphasize that the drift exhibits a \textit{pointwise} dependence on $u$;
in particular this dependence is not continuous with respect to the Wasserstein metric, as opposed to the {\it traditional setting} considered in most of the contributions in the literature. 
 Moreover the drift $b$ is possibly irregular in $x$.
\\ \\
An interesting feature of MSDEs is that the law of the  process $Y$ can often be characterized as the limiting empirical distribution of a large number  of interacting particles, whose dynamics are described by a coupled system of
classical SDEs. When the number 
of particles grows to infinity, the 
given particles behave closely to a system of 
independent copies of
$Y$. This constitutes the so called {\it propagation of chaos} phenomenon, already observed in the literature for the case of Lipschitz dependence, 
with respect to the Wasserstein metric,
 see e.g. \cite{kac, Mckeana,  Mckean, sznitman, MeleaRoel}.
\\ \\
A second important property of many MSDEs is their close relation 
 to nonlinear PDEs. 
In the present paper we propose to relate MFKE~\eqref{eq:McKean_representation} to a semilinear PDE of the form
\begin{equation}
\label{eq:PDE_original}
\left \{
\begin{array}{l}
\partial_t u = L_t^* u 
- \text{div} \left(b(t,x,u) u \right) +\Lambda(t,x,u) u\ , \quad \textrm{for any}\  t\in [0,T]\ ,\\
u(0,dx) = \mathbf{u}_0(dx),
\end{array}
\right .
\end{equation}
where $\mathbf{u_0}$ is a Borel probability measure and $L^*$
is the non-degenerate second-order linear partial differential operator such that $a=\Phi\Phi'$ and for all $t\in [0,T]$ and all $\varphi \in C_0^{\infty}(\mathbb{R}^d)$,
\begin{align} \label{eq:formal_adjoint_star}
L^*_t(\varphi)(x)=
 \frac{1}{2} \sum\limits_{i,j=1}^d \partial_{ij}^2 (a_{ij}(t,\cdot) \varphi (\cdot))(x) - \sum\limits_{j=1}^d \partial_j (b_{0,j}(t,\cdot)\varphi(\cdot))(x).
\end{align}
When $\Lambda=0$,  the above equation is a {\it non-linear} Fokker-Planck
equation, it is conservative and it is known that,
 under mild assumptions, it describes the dynamics of the marginal probability densities, $u(t,\cdot)$, of the process $Y$. 
This correspondence, between the marginal laws of a diffusion process and a
 Fokker-Planck type PDE, constitutes a representation property according 
to which a stochastic object characterizes a deterministic one and vice versa. Such representation results have extensive interesting applications. In physics, biology
or economics, it is a way to relate a microscopic model involving interacting particles to a macroscopic model involving the dynamics of some representative quantities.
 Numerically, this correspondence motivates Monte Carlo approximation schemes for PDEs. 
In particular, \cite{bossytalay1} has  contributed to develop stochastic
 particle methods in the
 spirit of McKean to provide original numerical schemes approaching a PDE
 related to 
Burgers equation providing also the rate of convergence. \\ \\
The idea of generalizing MSDEs to MFKEs was originally introduced in the sequence of papers~\cite{LOR1,LOR2,LOR4}, 
with an earlier contribution in \cite{BRR3}, where
$\Lambda(t,x,u) = \xi_t(x),$ $\xi$ being the sample
of a Gaussian noise random field, white in time and regular in space. 
\cite{LOR1} and \cite{LOR2} studied a mollified version of
\eqref{eq:PDE_original}, whose probabilistic representation 
 falls into the Wasserstein continuous
  traditional setting mentioned above. 
The underlying motivation consisted precisely in extending, to 
fairly general non-conservative PDEs, the probabilistic representation of nonlinear Fokker-Planck equations which appears when $\Lambda = 0$.
An interesting aspect of this strategy is that it is potentially
 able to represent an extended class of second order nonlinear PDEs.
 Allowing $\Lambda\neq 0$ encompasses the case of Burgers-Huxley or Burgers-Fisher equations which are of great importance to represent nonlinear phenomena in various fields such as biology~\cite{aronsonb,murray}, physiology~\cite{keener} and physics~\cite{wang}. 
These equations have the particular interest to describe the interaction between the reaction mechanisms, convection effect, and diffusion transport. \\ \\
To highlight the contribution of this paper, it is important to consider carefully the two major 
features differentiating the
MFKE~\eqref{eq:McKean_representation} from the traditional setting of MSDEs. \\
To recover the traditional setting one has to do the following.
\begin{enumerate}
\item
First, one has to put $\Lambda=0$ in the third line equation of~\eqref{eq:McKean_representation}. Then $u(t,\cdot)$ is explicitly given by the third line equation 
of~\eqref{eq:McKean_representation} and reduces to the density of 
the marginal distribution, $\mathcal{L}(Y_t)$. 
When $\Lambda \neq 0$, the relation between $u(t,\cdot)$ and the process
$Y$ is more complex.  
Indeed, not only does $\Lambda$ embed an additional nonlinearity with respect to $u$, but it also involves the whole past trajectory $(Y_s)_{0\leq s\leq t}$ of the process $Y$.  
\item Secondly, one has to replace
the pointwise dependence $b(s,Y_s,u(s,Y_s))$ in equation~\eqref{eq:McKean_representation} with a mollified dependence $b(s,Y_s,\int_{\R^d} K(Y_s,y)u(s,y)dy)$, where the dependence with respect to $u(s, \cdot)$ 
 is Wasserstein  continuous.
\end{enumerate}
Technically, in the case $\Lambda = 0$, to prove well-posedness of~\eqref{eq:McKean_representation} 
in the traditional setting, one may rely on a fixed point argument in the 
space of trajectories under the Wasserstein metric. Following the spirit of~\cite{sznitman}, this approach was carried out  in the general case in which
 $\Phi$ also shows a Wasserstein continuous dependence on $u$ in~\cite{LOR1}.
As already mentioned, the case where the coefficients depend pointwisely on $u$ is far more singular since  the dependence of the coefficients on the  law of $Y$ is no  more continuous 
with respect to the Wasserstein metrics. In this context, well-posedness results rely on analytical methods and require in general specific 
smoothness assumptions on both the coefficients and the initial condition.
 One important contribution in this direction is reported in~\cite{JourMeleard}, where strong existence and pathwise uniqueness are established when  
the diffusion coefficient $\Phi$  and the drift $g=b+b_0$ exhibit pointwise dependence on $u$ but are assumed to satisfy strong smoothness assumptions.
In this case, the solution $u$ is a classical solution of the PDE.
The specific case where the drift vanishes and the diffusion coefficient $\Phi(u(t,Y_t))$ has a pointwise dependence on the law density $u(t,\cdot)$ of $Y_t$
  has been more particularly studied in~\cite{Ben_Vallois} for classical 
porous media type equations and
\cite{BRR1,BRR2,BCR2,BCR3} who obtain well-posedness results for 
measurable and possibly singular  functions $\Phi$.
In that case the solution  $u$ of the associated PDE \eqref{eq:PDE_original},
is understood in the sense of distributions.
\\
\\
Our analysis of the well-posedness of~\eqref{eq:McKean_representation} is based on a
 different  approach. We rely on the notion of mild 
  solutions to
 Partial Differential Equations (PDEs) involving a reference semi-group
 on which the solution is built.
This is here possible since $\Phi$ does not depend on $u$, so we can allow
less regularity on the drift $g = b + b_0 $ at least with respect to time and space.
 To the best our
 knowledge, the first attempt to use this approach to analyze  McKean type SDEs 
well-posedness is reported 
in~\cite{LOR3}. 
The authors considered the case of a classical SDE
 ($b=0$), with a more singular dependence of $\Lambda$ with respect to
 $u$, since $\Lambda$ could also depend on $\nabla u$. 
In the present paper, due to the presence of 
$ \text{div}(b(t,y,u)u)$ in the PDE, 
the semi-group approach needs to be adapted. 
  An integration by parts technique takes advantage of the regularity
 of the semi-group while allowing to relax the regularity assumptions on $b$. 
Consequently, we establish the well-posedness of
 \eqref{eq:McKean_representation},
when $\Lambda$ and $b$ are only required to be bounded measurable in time and in space and Lipschitz with respect to the third variable.
We introduce in particular the cases where strong (resp.) weak
solutions appear.\\ \\
\noindent The paper is organized as follows. After this introduction,
 we clarify in Section \ref{SPreliminaries}
the notations and assumptions  
under which we work and the basic notions of weak and mild solutions
of \eqref{eq:PDE_original} in Section \ref{S3}. 
In Section \ref{SMain}, we state the main results with related proofs.
Theorem \ref{thm:representation} provides the equivalence between
solutions of the MFKE \eqref{eq:McKean_representation} 
and the PDE  \eqref{eq:PDE_original}.
Theorem \ref{thm:final} provides sufficient conditions 
for existence and uniqueness in law,
as well as strong existence and pathwise uniqueness,
 for the 
MFKE \eqref{eq:McKean_representation}.
The rest of the paper is devoted to more technical results used
to prove Theorems \ref{thm:representation} and \ref{thm:final}.
In Section \ref{S3bis} we show the equivalence between 
the notions of weak and mild solutions for \eqref{eq:PDE_original}.
  Theorem \ref{thm:e_and_u} is the key result of Section \ref{S4} 
and states existence
and uniqueness of mild solutions for \eqref{eq:PDE_original}.
Finally Proposition \ref{prop:uniqueness_measure_mild} 
 in Section \ref{subsec_equiv_3}
concerns the uniqueness of the measure-mild solution of the linear
PDE \eqref{eq:linearized_PDE}.
It is indeed the crucial tool for proving the existence of a solution 
to \eqref{eq:McKean_representation}.

\section{Basic assumptions}
\label{SPreliminaries}
\subsection{Notations}

 For a matrix $A$,
 $A^t$ denotes its transpose. 
$M_f$ is the space of signed finite measures on the Borel algebra $\mathcal{B}^d$ of $\R^d$. We equip $M_f$ with the total variation norm $\left\Vert \cdot \right\Vert_{TV}$. 
For $d\in \N^*$ and a function $f=f(t,x,\ldots)$ with $t\in [0,T]$ and $x\in \R^d$, we write
 $\partial_t f=\frac{\partial f}{\partial t}$ and 
$\partial_{x,k} f:=\frac{\partial f}{\partial x_k}$. If there is no ambiguity about $x$, we sometimes simply write $\partial_k f:=\partial_{x,k}f$. Unless we explicitly require regularity of $f$, we will interpret derivatives as distributional derivatives. 
\\ \\
Let $E$ be either $\R^d$ or $[0,T]\times \R^d$, $\vert \cdot \vert$ 
denotes the Euclidean norm of $E$. $C_0(E)$ is the set of real-valued continuous functions with compact support on $E$. $C_b(E)$  is the set of real-valued continuous bounded functions on $E$. $C_0^{\infty}(E)$ is the set of real-valued smooth functions with compact support on $E$. For $p\in [1,\infty]$, we write $L^p(E)$ for measurable real-valued functions on $E$ with finite $\left\Vert \cdot \right\Vert_{L^p}$ norm.
and $L_{\text{loc}}^p(E)$ for the locally integrable real-valued functions on $E$.
$\mathscr{M}_f(E)$ denotes the space of finite Borel signed measure in $E$.
 If $E=\R^d$, we write $C_0$ instead of $C_0(\R^d)$ and similarly $C_b$, $C_0^{\infty}$ 
and $L^p$ for $C_b(\R^d)$, $C_0^{\infty}(\R^d)$,
 and $L^p(\R^d)$ respectively.  For $k\in \N^*$, we also set $C_b^k$ to be the set of all bounded functions from $\R^d$ to $\R$ which are bounded and have continuous and bounded partial derivatives up to the $k$-th order. We then define
$C_b^{0,k}([0,T] \times \mathbb{R}^d):=\left\{ f\in C_b([0,T]\times \R^d) \ \middle| \ f(t,\cdot)\in C_b^k \ \forall  t\in [0,T] \right\}$. \\ \\ 
Finally, we denote the set of symmetric and positive semi-definite matrices in $\R^{d \times d}$ by $\mathcal{S}^d$. 
\subsection{Assumptions}   

In the whole paper we consider a matrix 
$a=(a_{ij})_{i,j=1}^d:[0,T]\times \R^d \rightarrow \mathcal{S}^d$ 
such that $a = \Phi \Phi^t$ with
 $\Phi:[0,T]\times \mathbb{R}^d \rightarrow \R^{d\times d}$ as given.
We base our discussion on the following assumptions. 
\begin{enumerate}[label=\textbf{A.\arabic*}]
\item \label{A1} The matrix $a=(a_{ij})_{i,j=1}^d:[0,T]\times \R^d \rightarrow \mathcal{S}^d$ is bounded and measurable. $b_0:[0,T]\times \mathbb{R}^d \rightarrow \R$ is bounded and measurable.
\item \label{A2} The matrix $a=(a_{ij})_{i,j=1}^d$ 
is uniformly non-degenerate, i.e. there exists a constant $\mu >0$ such that for all $(t,x)\in [0,T]\times \mathbb{R}^d$ and all $\xi \in \mathbb{R}^d$ we have 
\begin{align}\label{eq:non_degenerate}
\xi^t a(t,x) \xi = \sum\limits_{i,j=1}^d a_{ij}(t,x) \xi_i \xi_j \geq \mu \left\vert \xi \right\vert^2.
\end{align}
\end{enumerate}

\begin{enumerate}[label=\textbf{A.\arabic*}]\setcounter{enumi}{2}
\item \label{A3} For all $x\in \R^d$ 
\begin{equation} \label{conda}
\lim_{y\rightarrow x} \sup_{0\leq s\leq T} |a(s,y)-a(s,x)|=0.
\end{equation}
 \end{enumerate}
\begin{rem} \label{R128}
If $a$ is continuous, then \eqref{conda} is verified.
\end{rem}
These assumptions will come into play in Section \ref{subsec_equiv_2} when we discuss the (weak) existence and uniqueness in law of the SDE in the first line of \eqref{eq:McKean_representation} for a fixed $u$.
\noindent This will allow to get weak existence and uniqueness in law for
our MFKE. If we substitute 
assumptions \ref{A3} by the following assumption\footnote{The assumption cited here is a simplified version
 of a weaker set of assumptions. In fact, we rely here only on the assumptions of Theorem 1 in \cite{veretennikov1981strong}.}, we will get strong existence and pathwise uniqueness for MFKE. 
\begin{enumerate}[label=\textbf{B.\arabic*}]\setcounter{enumi}{2}
\item \label{B3} Assume that $a:[0,T]\times \R^d \rightarrow \mathcal{S}^d$ is continuous and that 
  $\Phi:[0,T]\times \mathbb{R}^d \rightarrow \R^{d\times d}$ which is  Lipschitz 
continuous in space, uniformly in time i.e. there exists a $L_{\Phi}>0$ such that for all $t\in [0,T]$ and 
$x,y\in \R^d$ we have 
\begin{align*}
\left\vert \Phi(t,x)-\Phi(t,y) \right\vert \leq L_{\Phi} \left\vert x-y \right\vert.
\end{align*}

\end{enumerate}
We recall that $L_t^*$ was defined in
\eqref{eq:formal_adjoint_star}.
We define
\begin{align} \label{eq:formal_adjoint}
L_t(\varphi)(x)=\frac{1}{2} \sum\limits_{i,j=1}^d  a_{ij}(t,x) \partial_{ij}^2 \varphi (x) + \sum\limits_{j=1}^d b_{0,j}(t,x) \partial_j  \varphi(x),
\end{align} 
for all $t\in [0,T]$ and all $\varphi \in C_0^{\infty}(\mathbb{R}^d)$. Now we can introduce the  Fokker-Planck equation, i.e. for $0 \le s < T$,
\begin{align}\label{eq:FokkerPlanck}
\begin{cases} \partial_t \nu(t,x) = &L_t^* \nu(t,x)   \quad \forall (t,x) \in (s,T]\times \R^d \\
\nu(s,\cdot)= & \nu_0,
\end{cases}
\end{align}
where $\nu_0$ is a probability measure on $\mathcal{B}^d$.  
We introduce the notion of fundamental solution, for simplicity 
under Assumption \ref{A1}.
\begin{defi} \label{D3} 
A Borel function $p:[0,T] \times \mathbb{R}^d \times [0,T]\times \mathbb{R}^d \rightarrow \R_+$ is a \textbf{fundamental solution} to \eqref{eq:FokkerPlanck},
with the convention that
$ p(s,\cdot,t, \cdot) = 0$ a.e., if $s > t$,
if  the following holds.
\begin{enumerate}
\item For every $s, x_0, t$ such that $0 \le s < t \le T,$
\begin{equation} \label{Intequal1}
 \int\limits_{\mathbb{R}^d} p(s,x_0,t,x) dx = 1.
\end{equation}
\item
 For every probability distribution $\nu_0$ on $\R^d$,  the function
\begin{align}\label{fundamentalsolution}
\nu_s(t,x):=\int\limits_{\mathbb{R}^d} p(s,x_0,t,x)\nu_0(\dif x_0)
\end{align}
is a  solution in the sense of distributions to \eqref{eq:FokkerPlanck}
i.e., for all $\varphi \in C_0^{\infty},$
\begin{align}\label{eq:rather_implicit}
  \int\limits_{\mathbb{R}^d} \varphi (x)\nu_s(t,x)\dif x -\int\limits_{\mathbb{R}^d} \varphi (x)\nu_0(\dif x)=\int\limits_s^t \int\limits_{\mathbb{R}^d} L_r\varphi (x)
 \nu_s(r,x)\dif x \dif r.  
\end{align}
\end{enumerate}
\end{defi}
\begin{rem}
\begin{enumerate}
\item
Proposition \ref{PAppReg} provides sufficient conditions for 
the existence of a fundamental solution.
\item In many examples, a fundamental solution 
in the sense of Definition \ref{D3}
is also a fundamental solution 
of $L_t^*u - \partial_t u = 0$ 
in the terminology of Friedman, see Definition in sect. 1, p.3 of \cite{friedmanEDP},
for instance under the conditions of Proposition \ref{PAppReg}. 
The details are provided in the proof.
\item 
By the validity of \eqref{Intequal1}, the expressions 
\eqref{eq:rather_implicit}
 and \eqref{fundamentalsolution} 
make sense. We also say that $p$ is a {\it Markov fundamental solution}.
\end{enumerate}
\end{rem}
\noindent Now, we are in the position to impose our assumptions on the fundamental solution.
\begin{enumerate}[label=\textbf{A.\arabic*}]
\setcounter{enumi}{3}
\item \label{A4} 
There exists a fundamental solution
$p$ to \eqref{eq:FokkerPlanck} with the following properties.
\begin{enumerate}
\item The first order partial derivatives of the map $x_0 \mapsto p(s,x_0,t,x)$Let us consider the family of Markov 
transition functions $P(s,x_0,t, \cdot)$
associated with $(L_t)$, see \cite{LOR3}. exist in the distributional sense.
\item For almost all $0 \leq s < t \leq T$ and $x_0,x \in \R^d$ there are constants $C_u,c_u>0$ 
such that 
\begin{align}
\label{eq:1011a}
 p(s,x_0,t,x) \leq C_u
q(s,x_0,t,x)
\end{align}
and
\begin{align}
\label{eq:1011}
\left\vert \partial_{x_0} p(s,x_0,t,x) \right\vert \leq C_u
 \frac{1}{\sqrt{t-s}} q(s,x_0,t,x)\ ,
\end{align}
where
$q(s,x_0,t,x):=\left(\frac{c_u(t-s)}{\pi}\right)^{\frac{d}{2}} e^{-c_u \frac{\vert x-x_0 \vert^2}{t-s}}$ 
is a Gaussian probability density.
\item The following {\it Chapman-Kolmogorov type equality} holds:
 for all $0\leq s \leq t\leq r$ and for almost all $x_0,y \in \R^d$ we have
\begin{equation} \label{MarkovFund}
p(s,x_0,r,y)=\int\limits_{\R^d} p(s,x_0,t,x) p(t,x,r,y )\dif x.
\end{equation}
\end{enumerate}
 \end{enumerate}
The next two assumptions concern $b$ and $\Lambda$.
\begin{enumerate}[label=\textbf{A.\arabic*}]
\setcounter{enumi}{4}
\item \label{A5} $b:[0,T]\times \mathbb{R}^d \times \mathbb{R} \rightarrow \mathbb{R}^d$ is uniformly bounded by $M_b>0$. 
Similarly, $\Lambda:[0,T]\times \mathbb{R}^d \times \mathbb{R} \rightarrow \mathbb{R}$ is uniformly bounded by 
$M_{\Lambda} >  0$. 

\item \label{A6}
\begin{enumerate}
\item $b:[0,T]\times \mathbb{R}^d \times \mathbb{R} \rightarrow \mathbb{R}^d$ is Borel, Lipschitz continuous in its third argument, uniformly in space and time, i.e. there exists an $L_b>0$ such that for all $t\in [0,T], x\in \R^d$ and  $z_1,z_2 \in \R$ one has
\begin{align*}
\left\vert b(t,x,z_1)-b(t,x,z_2) \right\vert \leq L_b \left\vert z_1- z_2 \right\vert. 
\end{align*}
\item Similarly, $\Lambda:[0,T]\times \mathbb{R}^d \times \mathbb{R} \rightarrow \mathbb{R}$ is Borel, Lipschitz continuous in its third argument, uniformly in space and time, i.e. there exists an $L_b>0$ such that for all $t\in [0,T], x\in \R^d$ and  $z_1,z_2 \in \R$ one has
\begin{align*}
\left\vert \Lambda(t,x,z_1)-\Lambda(t,x,z_2) \right\vert \leq L_{\Lambda} \left\vert z_1- z_2 \right\vert. 
\end{align*}
\item We also suppose that $(t,x) \mapsto \Lambda (t,x,0)$ and 
$(t,x) \mapsto b (t,x,0)$
are bounded. 
\end{enumerate}
\end{enumerate}
By now, we have imposed assumptions on all the terms appearing in the PDE \eqref{eq:PDE_original} as well as in MFKE~\eqref{eq:McKean_representation}. Let us consider now the initial condition of both the PDE and the MFKE.
\begin{enumerate}[label=\textbf{A.\arabic*}]
\setcounter{enumi}{6}
\item \label{A7} ${\bf u_0}$ admits a bounded density $u_0$ with respect to the Lebesgue measure.
\end{enumerate}

\noindent Except for \ref{A4}, all assumptions are straightforward to verify. Fortunately, there are results specifying manageable conditions which are sufficient for \ref{A4}. 
\begin{rem}\label{rem:Friedman_conditions} 
Proposition \ref{PAppReg} in the Appendix 
precises regularity conditions on the coefficients $(a_{ij})$ and $(b_0)_j$ which ensure that Assumption \ref{A4} is fulfilled.
\end{rem}

\subsection{Weak and mild solutions}
\label{S3}
We will begin by introducing the notion of a weak solution to \eqref{eq:PDE_original}. 
\begin{defi} \label{def_weak_sol}
 Assume \ref{A1} and $b, \Lambda$ to be locally bounded.
 A \textbf{weak solution of  PDE \eqref{eq:PDE_original}} is given by a function $u:[0,T]\times \mathbb{R}^d \rightarrow \mathbb{R}$ such that $u \in L_{\text{loc}}^1([0,T]\times \mathbb{R}^d)$ and we have, for any $\varphi \in C_0^{\infty}$,
\begin{align}
\int\limits_{\mathbb{R}^d} \varphi(x)u(t,x)\dif x = &\int\limits_{\mathbb{R}^d} \varphi(x){\bf u_0}(\dif x) + \int\limits_0^t \int\limits_{\mathbb{R}^d} u(s,x)L_s \varphi(x) \dif x \dif s \notag \\ 
 & +\sum\limits_{j=1}^d \int\limits_0^t \int\limits_{\mathbb{R}^d} \partial_j (\varphi (x)) b_j(s,x,u(s,x))u(s,x) \dif x \dif s \notag\\
& + \int\limits_0^t \int\limits_{\mathbb{R}^d} \varphi(x)  \Lambda (s,x,u(s,x)) u(s,x) \dif x\dif s. \label{eq:weak_sol_def}
\end{align}
\end{defi}
\noindent 
Contrarily to the case  when $b_0$ and $b$
vanish, but $\Phi$ may depend on $u$ as in \cite{BRR1, BRR2, BCR2},
 we do not have analytical tools at our disposal with which existence and uniqueness of weak solutions can be established. This is the main reason why we now introduce the notion of mild solutions. Assume that $p$ is a fundamental solution of~\eqref{eq:FokkerPlanck} in the sense of Assumption \ref{A4}. 
The classical and natural formulation of mild solution for \eqref{eq:PDE_original} looks as
follows:
\begin{align*}
u(t,x)=&\int_{\mathbb{R}^d} p(0,x_0,t,x)\mathbf{u_0}(\dif x_0)+\int_0^t\int_{\mathbb{R}^d} \Lambda(t,x_0,u(t,x_0))u(t,x_0)p(s,x_0,t,x)\dif x_0 \dif s\\ &-\sum\limits_{j=1}^d \int_0^t\int_{\mathbb{R}^d} \partial_{x_0,j} \left[b_j(t,x_0,u(t,x_0))u(t,x_0) \right]p(s,x_0,t,x)\dif x_0 \dif s.
\end{align*}
Note, however, that we do not assume any differentiability of $b$ in space and therefore a definition including $\partial_{x_0,j} b_j(t,x_0,u(t,x_0))$ does not make sense. This motivates to formally integrate by parts. The boundary term would disappear if we set $u\in L^1([0,T]\times \R^d)$ because $b$ is bounded and $p$ satisfies inequality \eqref{eq:1011}. By shifting the derivative from $b$ to $p$, this leads to the following definition.
	\begin{defi} \label{def_mild_sol} 
Suppose that assumptions \ref{A1}, \ref{A4} and \ref{A5} hold.
 A function $u:[0,T]\times \mathbb{R}^d \rightarrow \R$ will be called \textbf{mild solution of \eqref{eq:PDE_original}} if $u \in L^1([0,T] \times \mathbb{R}^d)$ and we have, for any $t \in [0,T]$ , that
\begin{align}
u(t,x)= &\int\limits_{\mathbb{R}^d} p(0,x_0,t,x){\bf u_0} (\dif x_0) + \int\limits_0^t \int\limits_{\mathbb{R}^d}u(s,x_0)\Lambda(s,x_0,u(s,x_0))p(s,x_0,t,x) \dif x_0 \dif s \notag \\
&+ \sum\limits_{j=1}^d \int\limits_0^t \int\limits_{\mathbb{R}^d}u(s,x_0) b_j(s,x_0,u(s,x_0)) \partial_{x_0,j} p(s,x_0,t,x) \dif x_0 \dif s \label{eq:mild_sol_def}.
\end{align} 
\end{defi}
\noindent We observe that, whenever $u \in L^1([0,T] \times \R^d)$ and assumptions  \ref{A1}, \ref{A4} and \ref{A5} hold, the right-hand side of  \eqref{eq:mild_sol_def} is indeed a well-defined function in $ L^1([0,T] \times \R^d)$. 

\medskip
\noindent
In the sequel we will often make use of the assumption below, which is
in fact automatically implied by Assumptions \ref{A1}, \ref{A2}, \ref{A3}.
\begin{enumerate}[label=\textbf{C}]
\item \label{C}
The PDE 
\begin{align}
\begin{cases}
\partial_t w &= L_t^* w, \\
w(0,\cdot)&= 0, \label{eq:uniqueness_pde}
\end{cases}
\end{align}
admits $w=0$ as unique weak solution 
among measure valued functions from $[0,T]$ to ${\mathcal M} ({\mathbb R^d})$, i.e. there exists a
unique measure-weak solution in the sense of  Definition~\ref{def:measure_weak_solution}.
\end{enumerate}

\begin{rem} \label{rem:discussion_uniqueness_pde}
\begin{enumerate}
\item
As we have written above, under Assumptions \ref{A1}, \ref{A2} and \ref{A3},
Assumption  \ref{C} is verified. 
Indeed by  Lemma 2.3 in \cite{figalli}, Assumption \ref{C} holds when
the martingale problem related to $(L_t)$ is well-posed.
Under Assumptions \ref{A1}, \ref{A2} and \ref{A3}
this is always the case
see e.g.  Theorem 7.2.1 in Chapter 7, Section 2 of \cite{stroock}.
\item Other conditions of validity for the uniqueness of weak solutions to
 \eqref{eq:uniqueness_pde} are discussed in the literature.
 Recent results include Theorem 1 in \cite{bogachevSFB}, Theorem 1.1 in \cite{roeckner2010} and Theorem 3.1 in \cite{BCR2}. 

\end{enumerate}
\end{rem}

\section{Main results and strategy of the proofs}
\label{SMain}

\subsection{Well-posedness for McKean Feynman-Kac equation}

Throughout this section, we impose Assumption \ref{A1}.
 In particular, \ref{A1} implies that we can write $a(t,x)=\Phi(t,x)\Phi(t,x)^t$ for some bounded $\Phi$ which we fix.
 Given a filtered probability space 
$(\Omega, \mathcal{F},(\mathcal{F}_t)_{t\in [0,T]},\mathbb{P})$ equipped with a $d$-dimensional
$(\mathcal{F}_t)_{t \in [0,T]}$ Brownian motion $(W_t)_{t\in [0,T]}$ and an ${\mathcal F}_0$-measurable random variable $Y_0 \sim {\bf u_0}$, we say  that
 a couple $(Y,u)$ is a solution to~\eqref{eq:McKean_representation} if the following conditions hold.
\begin{enumerate}
\item $Y$ is an  $(\mathcal{F}_t)_{t \geq 0}$-adapted process and $u:[0,T] \times \R^d \rightarrow \R $, such that $(Y,u)$ verifies~\eqref{eq:McKean_representation};
\item $(t,x) \mapsto b(t,x,u(t,x))$ and $(t,x) \mapsto \Lambda(t,x,u(t,x))$ 
are bounded. 
\end{enumerate}
Below we introduce the notions of existence and uniqueness to \eqref{eq:McKean_representation}.
The uniqueness aspect will be defined only in the class of
pairs $(Y,u)$ such that $u$ is bounded.

\begin{defi} \label{def-strong-sol}\hfill
\begin{enumerate} 
\item We say that \eqref{eq:McKean_representation} admits \textbf{strong existence}
if for any complete filtered probability space 
$(\Omega, \mathcal{F},(\mathcal{F}_t)_{t\in [0,T]},\mathbb{P})$ equipped with a $d$-dimensional
$(\mathcal{F}_t)_{t \in [0,T]}$ Brownian motion $(W_t)_{t\in [0,T]}$ and an
 ${\mathcal F}_0$-measurable random variable $Y_0 \sim {\bf u_0}$, there is a couple $(Y,u)$ such that $Y$ is an  $(\mathcal{F}_t)_{t \geq 0}$-adapted process, $u:[0,T] \times \R^d \rightarrow \R $ and $(Y,u)$ is a solution to \eqref{eq:McKean_representation}. 
\item We say that \eqref{eq:McKean_representation} admits \textbf{pathwise uniqueness} if for any complete filtered probability space 
$(\Omega, \mathcal{F},(\mathcal{F}_t)_{t\in [0,T]},\mathbb{P})$ equipped with a $d$-dimensional
$(\mathcal{F}_t)_{t \in [0,T]}$-Brownian motion $(W_t)_{t\in [0,T]}$, an $\mathcal{F}_0$-random variable $Y_0\sim {\bf u_0}$, the following holds.
For any given two  pairs $(Y^1,u^1)$ and $(Y^2,u^2)$ of solutions
 to \eqref{eq:McKean_representation} such that 
$u^1, u^2$ are bounded and
$Y^1_0 = Y^2_0$ $\mathbb{P}$-a.s. we have that $Y^1$ and $Y^2$ are indistinguishable and $u^1 = u^2$.
\end{enumerate}
\end{defi}
\begin{defi} \label{def-weak-sol}\hfill 
\begin{enumerate}
\item
We say that \eqref{eq:McKean_representation}  admits \textbf{existence in law} if 
there is a complete filtered probability space 
$(\Omega, \mathcal{F},(\mathcal{F}_t)_{t\in [0,T]},\mathbb{P})$ equipped with a $d$-dimensional
$(\mathcal{F}_t)_{t \in [0,T]}$-Brownian motion $(W_t)_{t\in [0,T]}$, 
 a  pair $(Y,u)$  solution of \eqref{eq:McKean_representation}, where 
$Y$ is an $(\mathcal{F}_t)_{t \in [0,T]}$-adapted process
 and  $u$ is a real valued function defined on $[0,T] \times \R^d$. 
\item 
We say that  \eqref{eq:McKean_representation}  admits \textbf{uniqueness in law}, if the following holds.
For any two solutions, $(Y,u)$ and $(\tilde{Y},\tilde{u}$) of \eqref{eq:McKean_representation}, defined on complete filtered probability spaces $(\Omega, \mathcal{F},(\mathcal{F}_t)_{t\in [0,T]},\mathbb{P})$ and $(\tilde{\Omega}, \tilde{\mathcal{F}},(\tilde{\mathcal{F}}_t)_{t\in [0,T]},\tilde{\mathbb{P}})$, 
respectively, which are such that $u$ and $\tilde u$ are bounded, then $u=\tilde{u}$ and $Y$ and $\tilde{Y}$ have the same law.
\end{enumerate}
\end{defi}

\subsection{A relaxation of Assumption \ref{A5}}.
While Assumption \ref{A5} is convenient, 
it is not verified in many interesting situations.
However, it can be replaced with the following assumption.
\begin{enumerate}[label=\textbf{C5}]
\item \label{C5}
There exists a mild solution  $u \in L^1([0,T]\times \mathbb{R}^d)$
of \eqref{eq:PDE_original},
such that $(t,x) \mapsto b(t,x,u(t,x))$ and 
$(t,x) \mapsto \Lambda(t,x,u(t,x))$ 
are bounded.
\end{enumerate}
\begin{rem} \label{RGeneral}
Under Assumption \ref{A6}, if
$u$ is a bounded mild solution of  \eqref{eq:PDE_original}
then Assumption \ref{C5} is verified.
\end{rem}
\begin{rem}\label{rem:Burgers_apriori}
For example, consider Burgers' equation
\begin{align}
\begin{cases}
\partial_t u(t,x)&=\frac{\nu}{2}\partial_{xx}^2 u(t,x)-u(t,x)\partial_x u(t,x),\qquad \forall (t,x)\in [0,T]\times \mathbb{R},\\
u(0,\cdot)&=u_0,
\end{cases}
\label{eq:Burgers_equation}
\end{align}
where the constant $\nu>0$ and $u_0$ is a bounded probability density with respect to the Lebesgue measure. In our framework, the representation would be
 $b(t,x,z) =\frac{1}{2} z$, $\Lambda(t,x,z)=0$, $b_0(t,x)=0$ and $\Phi(t,x)=\sqrt{\nu}$. As the reader may easily verify using Proposition \ref{PAppReg}, all assumptions except for \ref{A5} are satisfied. Assumption \ref{A5} is violated because $ z \mapsto \frac{1}{2} z$ 
 is only locally bounded.
However Assumption \ref{C5} is verified.
Indeed there exists a bounded classical solution $u$.
In fact for instance \cite{DelarueMenozzi} states that
 \eqref{eq:Burgers_equation} admits a classical solution 
given by
\begin{eqnarray}
\label{eq:ExpliBurg}
u(t,x) = \frac{{\mathbb E}[u_0(x+ \nu B_t)e^{-\frac{U_0(x+ \nu B_t)}{\nu^2}}]}
{{\mathbb E}[e^{-\frac{U_0(x+\nu B_t)}{\nu^2}}]}, \quad (t,x) \in [0,T] \times \R, 
\end{eqnarray}
where $B$ denotes a one-dimensional Brownian motion and $U_0$ is the cumulative distribution function associated with $u_0$. Since $u_0$ is bounded, $u$ is obviously bounded.
This is therefore a weak solution. 
Taking into account
 Assumption \ref{C} and Proposition \ref{prop:equiv_mild_weak_sol},  
it is also a mild solution. Then by Remark \ref{RGeneral}, Assumption \ref{C5}
is verified.
\end{rem}

\subsection{Main results}
In this section, we state the two main theorems of the article.
\begin{thm} \label{thm:representation}
Assume that Assumptions \ref{A1}, \ref{A2}, \ref{A3}, \ref{A4} 
 hold.
\begin{enumerate}
\item Let $(Y,u)$  
 be a solution of \eqref{eq:McKean_representation},
where $u \in L^1([0,T]\times \mathbb{R}^d)$:
in particular $(t,x) \mapsto b(t,x,u(t,x))$ and 
$(t,x) \mapsto \Lambda(t,x,u(t,x))$ 
are bounded.
Then $u$ is a weak solution of \eqref{eq:PDE_original}.
\item 
 Let 
$u \in L^1([0,T]\times \mathbb{R}^d)$
 be a weak solution of \eqref{eq:PDE_original}
 such that
  $(t,x) \mapsto b(t,x,u(t,x))$ and 
$(t,x) \mapsto \Lambda(t,x,u(t,x))$ 
are bounded. 
Then  \eqref{eq:McKean_representation} admits existence 
and uniqueness  in law.
In particular there is  a (unique in law) process $Y$ such that
 $(Y,u)$ is a solution of \eqref{eq:McKean_representation}.
\item 
If Assumption \ref{A3} is replaced with  \ref{B3}, 
 we obtain an analogous results to 
item 2. where
existence and uniqueness in law 
 is replaced by strong existence
and pathwise uniqueness.
\end{enumerate}
\end{thm}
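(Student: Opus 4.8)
The plan is to prove the three items in order, since each builds on the previous one. For item 1, I would start from a solution $(Y,u)$ of \eqref{eq:McKean_representation} and apply It\^o's formula to $\varphi(Y_t)$ for $\varphi \in C_0^\infty$, using the first line of \eqref{eq:McKean_representation}. This produces a martingale term (which vanishes in expectation) plus drift contributions involving $L_s\varphi(Y_s)$ and $\partial_j\varphi(Y_s)\,b_j(s,Y_s,u(s,Y_s))$. Then I would multiply by the Feynman-Kac exponential weight $E_t := \exp(\int_0^t \Lambda(s,Y_s,u(s,Y_s))\,ds)$ and use the product rule for the pair $(\varphi(Y_t), E_t)$: since $E$ has finite variation with $dE_t = \Lambda(t,Y_t,u(t,Y_t))E_t\,dt$, the cross-variation vanishes and I get $d(\varphi(Y_t)E_t) = E_t\,d\varphi(Y_t) + \varphi(Y_t)\Lambda(t,Y_t,u(t,Y_t))E_t\,dt$. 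Taking expectations and invoking the linking equation (third line of \eqref{eq:McKean_representation}) to rewrite $\mathbb{E}[\psi(Y_t)E_t] = \int \psi(x)u(t,x)\,dx$ for the relevant integrands $\psi$, one recovers exactly the weak formulation \eqref{eq:weak_sol_def}. The integrability needed to justify taking expectations and applying Fubini comes from boundedness of $\Phi$, $b(\cdot,\cdot,u)$, $\Lambda(\cdot,\cdot,u)$ and from $u \in L^1$.

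For item 2, given a weak solution $u$ with $b(\cdot,\cdot,u)$ and $\Lambda(\cdot,\cdot,u)$ bounded, I would \emph{freeze} $u$ and consider the classical SDE
\[
dY_t = \Phi(t,Y_t)\,dW_t + \big[b_0(t,Y_t) + b(t,Y_t,u(t,Y_t))\big]\,dt, \qquad Y_0 \sim \mathbf{u}_0.
\]
Its coefficients are bounded and measurable, the diffusion matrix is non-degenerate (\ref{A2}) and satisfies the continuity condition (\ref{A3}), so by the classical theory (e.g. Stroock-Varadhan, Theorem 7.2.1 in \cite{stroock}) the associated martingale problem is well-posed; this yields weak existence and uniqueness in law for the frozen SDE. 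It remains to check that the pair $(Y,u)$ built this way satisfies the linking equation, i.e. that $\int \varphi(x)u(t,x)\,dx = \mathbb{E}[\varphi(Y_t)E_t]$. The candidate density $t \mapsto \mathbb{E}[\varphi(Y_t)E_t]$ solves, by the It\^o computation of item 1, the \emph{linear} PDE obtained from \eqref{eq:PDE_original} by freezing the nonlinear arguments at $u$; but $u$ itself is, by hypothesis, also a weak solution of that same linear PDE. Hence both coincide by the uniqueness of (measure-)weak solutions, which is precisely Assumption \ref{C} — valid here since \ref{A1}, \ref{A2}, \ref{A3} hold (Remark \ref{rem:discussion_uniqueness_pde}), or more precisely Proposition \ref{prop:uniqueness_measure_mild}. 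This identifies $(Y,u)$ as a genuine solution of \eqref{eq:McKean_representation}, and uniqueness in law of $Y$ follows from uniqueness in law of the frozen SDE together with the fact that $u$ is determined by the linking equation.

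For item 3, the argument is structurally identical; only the input to the frozen SDE changes. Under \ref{B3} the diffusion coefficient $\Phi$ is continuous and Lipschitz in space and the drift is bounded measurable, so by Veretennikov's theorem \cite{veretennikov1981strong} the frozen SDE enjoys strong existence and pathwise uniqueness. One then repeats the linking-equation verification exactly as above (which only used weak-solution uniqueness of the linear PDE, unchanged under \ref{B3} since \ref{B3} implies \ref{A3}), and concludes strong existence and pathwise uniqueness for \eqref{eq:McKean_representation}; note that pathwise uniqueness of the pair requires in addition that $u^1 = u^2$, which again follows from the linear-PDE uniqueness argument applied to either solution. The main obstacle I anticipate is the rigorous justification of the It\^o/product-rule computation and the ensuing Fubini interchanges at the level of generality of the assumptions (merely bounded measurable coefficients, $u$ only $L^1$, derivatives only distributional), and in particular making precise the claim that $t \mapsto \mathbb{E}[\varphi(Y_t)E_t]$ is a (measure-)weak solution of the frozen linear PDE in the exact sense demanded by Assumption \ref{C}; the rest is bookkeeping and appeals to the cited well-posedness results.
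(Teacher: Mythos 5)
Your plan follows the paper's own route almost step for step: item 1 by the It\^o/product-rule computation on $\varphi(Y_t)\exp(\int_0^t\Lambda(s,Y_s,u(s,Y_s))\,ds)$ plus the linking equation; item 2 by freezing $u$, solving the frozen SDE via Stroock--Varadhan (the paper's Lemma \ref{T127}), and identifying the Feynman--Kac measure $\mu_t$ with $u(t,x)\,dx$ through a uniqueness statement for the frozen linear PDE \eqref{eq:linearized_PDE}; item 3 by swapping in Veretennikov (Lemma \ref{veretennikov}). So the decomposition and the key external inputs are the same.

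The one step that would fail as literally written is the appeal to Assumption \ref{C} for the identification $\mu_t(\dif x)=u(t,x)\dif x$. Assumption \ref{C} is uniqueness for the \emph{homogeneous} Fokker--Planck equation \eqref{eq:uniqueness_pde}, i.e.\ $\partial_t w=L_t^*w$ with $w(0)=0$; but the difference $\mu_t-u(t,x)\dif x$ of your two candidates satisfies the frozen linear PDE \eqref{eq:linearized_PDE} with zero initial datum, which still carries the terms $-\mathrm{div}(\hat b\,w)+\hat\Lambda\,w$ acting on the unknown, so \ref{C} does not apply to it directly. (It does apply in the paper's Proposition \ref{prop:equiv_mild_weak_sol}, because there the $b$- and $\Lambda$-terms are frozen as \emph{source} terms, so the difference solves the homogeneous equation.) The correct mechanism --- which you gesture at with ``or more precisely Proposition \ref{prop:uniqueness_measure_mild}'' but do not actually wire in --- is the paper's detour through measure-mild solutions: first upgrade the given weak solution $u$ to a mild solution via Proposition \ref{prop:equiv_mild_weak_sol} (this is where \ref{C} legitimately enters), then show by the It\^o computation that $\mu$ is a measure-weak solution of \eqref{eq:linearized_PDE} and convert it to a measure-mild solution via Proposition \ref{prop:equiv_mild_weak_sol_lin}, and finally invoke the uniqueness of measure-mild solutions of \eqref{eq:linearized_PDE} (Proposition \ref{prop:uniqueness_measure_mild}), which is proved by a Gronwall/iteration argument using the Gaussian bounds of \ref{A4}, not by \ref{C}. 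With that substitution your argument matches the paper's; also note that the uniqueness-in-law conclusion for the pair only requires, at this stage, uniqueness in law of the frozen SDE once $u$ is fixed, exactly as you say.
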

This, together with Theorem \ref{thm:e_and_u}  will allow us to formulate 
 existence and uniqueness for the MFKE equation.
\begin{thm} \label{thm:final} 
Assume that \ref{A1},\ref{A2}, \ref{A3}, \ref{A4}, 
\ref{A6}, \ref{A7} 
hold.
\begin{enumerate}
 \item Under Assumption \ref{A5} or Assumption \ref{C5}
\eqref{eq:McKean_representation} admits existence in law.
Under Assumption \ref{A5} the constructed solution
$(Y,u)$ is such that $u$ is bounded. 
\item  
\eqref{eq:McKean_representation} 
admits uniqueness in law.
\item Suppose \ref{A3} is replaced with
 \ref{B3}.
\begin{itemize} 
\item Under Assumption \ref{A5} or \ref{C5}
 we  have strong existence 
of a solution to \eqref{eq:McKean_representation}.
\item Under Assumption \ref{A5} we have pathwise
uniqueness of a solution $(Y,u)$
to \eqref{eq:McKean_representation}. 
\end{itemize}
\end{enumerate}
\end{thm}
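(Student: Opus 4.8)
The plan is to deduce Theorem \ref{thm:final} from the two structural results already available: the representation Theorem \ref{thm:representation}, which makes solutions of the MFKE \eqref{eq:McKean_representation} correspond bijectively (up to law, resp. up to indistinguishability under \ref{B3}) to weak $L^1$-solutions of the PDE \eqref{eq:PDE_original}, and Theorem \ref{thm:e_and_u}, which asserts existence and uniqueness of \emph{mild} solutions of \eqref{eq:PDE_original}. The only conceptual glue needed is the equivalence between weak and mild solutions, which is Proposition \ref{prop:equiv_mild_weak_sol} and relies on Assumption \ref{C}; note that \ref{C} is automatic under \ref{A1}, \ref{A2}, \ref{A3} by Remark \ref{rem:discussion_uniqueness_pde}, so it is available throughout. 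So the architecture is: PDE mild solution $\rightleftharpoons$ PDE weak solution $\rightleftharpoons$ MFKE solution, and each statement of the theorem is obtained by running this chain in the appropriate direction with the appropriate notion of uniqueness.

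First I would prove item 1 (existence in law). Under \ref{A5}, Theorem \ref{thm:e_and_u} produces a mild solution $u\in L^1([0,T]\times\R^d)$ of \eqref{eq:PDE_original}; one must check it is bounded, which should come out of the fixed-point/iteration construction in Theorem \ref{thm:e_and_u} together with the Gaussian bounds \eqref{eq:1011a}--\eqref{eq:1011} and the boundedness of $u_0$ (Assumption \ref{A7}), $b$, $\Lambda$ (Assumption \ref{A5}); since $b,\Lambda$ are globally bounded, $(t,x)\mapsto b(t,x,u(t,x))$ and $(t,x)\mapsto\Lambda(t,x,u(t,x))$ are trivially bounded. Then Proposition \ref{prop:equiv_mild_weak_sol} upgrades $u$ to a weak solution, and part 2 of Theorem \ref{thm:representation} furnishes a process $Y$ with $(Y,u)$ a solution of \eqref{eq:McKean_representation}, giving existence in law with $u$ bounded. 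Under \ref{C5} instead, a mild solution $u\in L^1$ with the required boundedness of the composed coefficients is postulated directly; one applies Proposition \ref{prop:equiv_mild_weak_sol} and then Theorem \ref{thm:representation}(2) verbatim — here $u$ itself need not be bounded, only the compositions, which is exactly what Theorem \ref{thm:representation}(2) requires.

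For item 2 (uniqueness in law): given two solutions $(Y,u)$ and $(\tilde Y,\tilde u)$ with $u,\tilde u$ bounded, Theorem \ref{thm:representation}(1) shows $u$ and $\tilde u$ are both weak $L^1$-solutions of \eqref{eq:PDE_original}; Proposition \ref{prop:equiv_mild_weak_sol} makes them mild solutions; the uniqueness part of Theorem \ref{thm:e_and_u} forces $u=\tilde u$; and then the uniqueness-in-law clause of Theorem \ref{thm:representation}(2), applied to the now-common function $u$, yields that $Y$ and $\tilde Y$ have the same law. A mild technical point: $u$ bounded plus $b,\Lambda$ Lipschitz in the third variable with $b(\cdot,\cdot,0),\Lambda(\cdot,\cdot,0)$ bounded (Assumption \ref{A6}) gives boundedness of the composed coefficients, so the hypotheses of Theorem \ref{thm:representation} are met — this is essentially Remark \ref{RGeneral}. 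Item 3 is the same three arguments with \ref{A3} replaced by \ref{B3}: Theorem \ref{thm:representation}(3) now gives strong existence (from a bounded mild solution under \ref{A5}, or from a \ref{C5}-type mild solution) and pathwise uniqueness in the class of pairs with $u$ bounded; the PDE-level inputs (Theorem \ref{thm:e_and_u}, Proposition \ref{prop:equiv_mild_weak_sol}) are unaffected since \ref{B3} implies \ref{A3}.

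The main obstacle I anticipate is not in this assembly but in the boundedness claim for the mild solution under \ref{A5} — one needs an a priori $L^\infty$ bound propagating through the mild formulation \eqref{eq:mild_sol_def}, controlling the term $\int_0^t\!\int u\, b\,\partial_{x_0}p\,dx_0\,ds$ via the singular but integrable factor $(t-s)^{-1/2}$ from \eqref{eq:1011}; this is presumably handled inside Theorem \ref{thm:e_and_u} or its proof, so here I would simply invoke it. The only other point requiring care is bookkeeping the precise regularity hypotheses of Theorem \ref{thm:representation} versus what \ref{A5}/\ref{C5}/\ref{A6} deliver, i.e. making sure "composed coefficients bounded" is in force in each branch before citing the representation theorem.
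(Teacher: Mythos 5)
Your proposal is correct and follows essentially the same route as the paper: obtain a mild solution from Theorem \ref{thm:e_and_u} (or postulate it via \ref{C5}), pass to a weak solution by Proposition \ref{prop:equiv_mild_weak_sol} (using that \ref{C} holds under \ref{A1}--\ref{A3}, resp.\ under \ref{B3}), invoke Theorem \ref{thm:representation} for existence, and for uniqueness run the chain backwards (representation item 1, weak-to-mild equivalence, uniqueness of mild solutions) before identifying the laws of the processes. The only cosmetic difference is that for the final identification you cite the uniqueness-in-law clause of Theorem \ref{thm:representation} (item 2, resp.\ item 3) where the paper cites Lemma \ref{T127} (resp.\ Lemma \ref{veretennikov}) directly, which is the same ingredient one level up.
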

  \begin{rem} \label{RModifVeret}
 Concerning item 1. of the theorem above, if we substitute \ref{A3} with the hypothesis
that $\Phi$ is non-degenerate, i.e. \eqref{eq:non_degenerate}
is fulfilled replacing $a$ with $\Phi$,
we still have weak existence of a 
solution $(Y,u)$ (but not necessarily uniqueness) to \eqref{eq:McKean_representation}. Instead of appealing to 
 Lemma \ref{T127}
  for the weak existence and uniqueness in law of a solution to the SDE,
 we then appeal to
  Remark \ref{rem:e_without_u}. 
 \end{rem}

\subsection{Strategy of the proof}
In this section, we sketch the proofs to our main theorems, beginning with Theorem
\ref{thm:representation}. 

\subsubsection{Proof of Item 1. of Theorem \ref{thm:representation}}
\label{SRT}

Item 1. 
of Theorem \ref{thm:representation} follows from  Proposition~\ref{prop:strong_sol_of_sde_is_weak_sol_of_pde} below, which is obtained by a direct application of It\^o's formula.
\begin{prop} \label{prop:strong_sol_of_sde_is_weak_sol_of_pde}
Suppose the validity of Assumption \ref{A1}.
Suppose that $(Y,u)$ is a solution 
of \eqref{eq:McKean_representation}.
 Then $u$ is a weak solution of \eqref{eq:PDE_original}.
\end{prop}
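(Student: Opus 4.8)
The plan is to apply It\^o's formula to the process $\varphi(Y_t)\exp\left(\int_0^t \Lambda(s,Y_s,u(s,Y_s))\,\dif s\right)$ for an arbitrary test function $\varphi \in C_0^{\infty}$, and then take expectations, using the linking equation (third line of \eqref{eq:McKean_representation}) to translate the resulting expressions into integrals against $u(t,\cdot)$. First I would set $Z_t := \exp\left(\int_0^t \Lambda(s,Y_s,u(s,Y_s))\,\dif s\right)$, which is a bounded (by $e^{M_\Lambda T}$, using \ref{A5}, or by the boundedness of $(t,x)\mapsto\Lambda(t,x,u(t,x))$ assumed in the statement) finite-variation process with $\dif Z_t = Z_t\,\Lambda(t,Y_t,u(t,Y_t))\,\dif t$. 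Applying It\^o's product rule to $\varphi(Y_t)Z_t$ and using the SDE for $Y$ (first line of \eqref{eq:McKean_representation}) gives
\begin{align*}
\varphi(Y_t)Z_t = \varphi(Y_0) &+ \int_0^t Z_s\,\nabla\varphi(Y_s)^t\,\Phi(s,Y_s)\,\dif W_s \\
&+ \int_0^t Z_s\Big( L_s\varphi(Y_s) + \nabla\varphi(Y_s)^t\, b(s,Y_s,u(s,Y_s)) + \varphi(Y_s)\Lambda(s,Y_s,u(s,Y_s)) \Big)\dif s,
\end{align*}
where $L_s$ is the operator in \eqref{eq:formal_adjoint} (note $a=\Phi\Phi^t$). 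The boundedness of $\Phi$, $\nabla\varphi$ (compact support) and $Z$ ensures the stochastic integral is a true martingale, so taking expectations kills it.

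Next I would take expectations of both sides. On the left, $\mathbb{E}[\varphi(Y_t)Z_t] = \int_{\R^d}\varphi(x)u(t,x)\,\dif x$ by the linking equation. On the right, $\mathbb{E}[\varphi(Y_0)] = \int_{\R^d}\varphi(x){\bf u_0}(\dif x)$ since $Y_0\sim{\bf u_0}$. For the three drift terms, I would like to write each $\mathbb{E}\big[Z_s\, g(s,Y_s)\big]$ as $\int_{\R^d} g(s,x)u(s,x)\,\dif x$ for the relevant integrand $g$. This identity is not literally the linking equation (which only concerns the time-$t$ weight $Z_t$, not $Z_s$ for $s<t$ paired with a time-$s$ function), so the key point is to justify it. I expect this to be the main obstacle. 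The clean way is to invoke a Fubini/time-localization argument: for fixed $s$, one applies the linking equation relation at time $s$ — but the weight in the linking equation at time $s$ is exactly $Z_s$, so in fact $\mathbb{E}[\psi(Y_s)Z_s] = \int \psi(x)u(s,x)\dif x$ holds for every $\psi\in C_b$ and every fixed $s\in[0,T]$, directly by the third line of \eqref{eq:McKean_representation}. Approximating the bounded Borel functions $x\mapsto L_s\varphi(x)$ (bounded since $\varphi\in C_0^\infty$, $a,b_0$ bounded), $x\mapsto \nabla\varphi(x)^t b(s,x,u(s,x))$ and $x\mapsto \varphi(x)\Lambda(s,x,u(s,x))$ by $C_b$ functions (or using that $u(s,\cdot)\,\dif x$ is a finite measure and a monotone-class argument), one gets the pointwise-in-$s$ identity, and then Fubini (justified by the uniform bounds $M_b$, the bound on $L_s\varphi$, the bound on $\Lambda(\cdot,\cdot,u)$, and $u\in L^1([0,T]\times\R^d)$) lets one exchange $\mathbb{E}$ and $\int_0^t\dif s$.

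Assembling these pieces yields exactly
\begin{align*}
\int_{\R^d}\varphi(x)u(t,x)\dif x = \int_{\R^d}\varphi(x){\bf u_0}(\dif x) + \int_0^t\!\!\int_{\R^d} u(s,x)L_s\varphi(x)\dif x\dif s &+ \sum_{j=1}^d\int_0^t\!\!\int_{\R^d}\partial_j\varphi(x)\,b_j(s,x,u(s,x))u(s,x)\dif x\dif s \\
&+ \int_0^t\!\!\int_{\R^d}\varphi(x)\Lambda(s,x,u(s,x))u(s,x)\dif x\dif s,
\end{align*}
which is precisely \eqref{eq:weak_sol_def}; since $u\in L^1([0,T]\times\R^d)\subset L^1_{\mathrm{loc}}$ and $b,\Lambda$ composed with $u$ are bounded, $u$ is a weak solution of \eqref{eq:PDE_original} in the sense of Definition \ref{def_weak_sol}. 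The only genuinely delicate point, as noted, is the interchange of expectation with the time integral and the passage from the $C_b$-test-function linking equation to bounded Borel integrands; everything else is bookkeeping with It\^o's formula and the standing boundedness assumptions \ref{A1}, \ref{A5}.
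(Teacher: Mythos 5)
Your proposal is correct and follows essentially the same route as the paper: apply It\^o's formula (product rule) to $\varphi(Y_t)\exp\bigl(\int_0^t \Lambda(s,Y_s,u(s,Y_s))\,\dif s\bigr)$, take expectations so the stochastic integral vanishes, exchange expectation and time integral by Fubini, and invoke the linking equation at each time $s$ to obtain \eqref{eq:weak_sol_def}. Your extra care in extending the linking identity from $C_b$ test functions to the bounded Borel integrands $L_s\varphi$, $\nabla\varphi^t b(s,\cdot,u(s,\cdot))$ and $\varphi\,\Lambda(s,\cdot,u(s,\cdot))$ (via a monotone-class argument) is a point the paper passes over silently, but it is the same proof.
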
 
\begin{proof}[Proof \nopunct.]
Let $\varphi \in C_0^{\infty}(\mathbb{R}^d)$ be a test function.
We use stochastic integration by parts to infer that
\begin{eqnarray} 
\label{eq:proof_one_way_ito}
\varphi(Y_t)\exp\Big (\int\limits_0^t \Lambda(s,Y_s,u(s,Y_s)) \dif s \Big ) 
&=& \varphi(Y_0)\exp\left(0 \right) + \int\limits_0^t \exp \Big (\int\limits_0^s \Lambda(r,Y_r,u(r,Y_r)) \dif r \Big )  \dif \varphi(Y_s)  \nonumber \\
&&+ \int\limits_0^t \varphi(Y_s) \dif \ \exp \Big (\int\limits_0^s \Lambda(r,Y_r,u(r,Y_r)) \dif r \Big). 
\end{eqnarray}
To further develop the second term on the right-hand side, an application of It\^{o}'s formula yields
\begin{align*}
\varphi (Y_s)=&\varphi (Y_0)+\sum\limits_{i=1}^d \int\limits_0^s \partial_i \varphi (Y_r) \dif Y_r^i + \frac{1}{2} \sum\limits_{i,j=1}^d \int\limits_0^s \partial_{ij}^2 \varphi (Y_r) \dif \langle Y^i, Y^j \rangle_r \\
= &\varphi (Y_0)+\sum\limits_{i=1}^d \Big ( \sum\limits_{j=1}^d \int\limits_0^s \partial_i  \varphi (Y_r) \Phi_{ij}(r,Y_r) \dif W_r^j +\int\limits_0^s \partial_i \varphi (Y_r)(b_{0,i}(r,Y_r)+b_i(r,Y_r,u(r,Y_r)) \dif r \Big ) \\
&+ \frac{1}{2} \sum\limits_{i,j=1}^d \int\limits_0^s 
 a_{ij}(r,Y_r)
\partial_{ij}^2 \varphi (Y_r) \dif r \\
= &\varphi (Y_0)+\sum\limits_{i,j=1}^d \int\limits_0^s \partial_i  \varphi (Y_r) \Phi_{ij}(r,Y_r) \dif W_r^j + \int\limits_0^s L_r \varphi (Y_r)\dif r + \sum\limits_{i=1}^d  \int\limits_0^s \partial_i  \varphi (Y_r) b_i(r,Y_r,u(r,Y_r)) \dif r.
\end{align*}
For the third term we write
\begin{align*}
\exp\Big(\int\limits_0^s \Lambda(r,Y_r,u(r,Y_r)) \dif r \Big)= 1+\int\limits_0^s  \exp\Big (\int_0^r \Lambda(z,Y_z,u(z,Y_z))\dif z\Big ) \Lambda(r,Y_r,u(r,Y_r)) \dif r.
\end{align*}
Now, plugging all this into equation \eqref{eq:proof_one_way_ito} leaves us with
\begin{align*}
&\varphi(Y_t)\exp\Big(\int\limits_0^t \Lambda(s,Y_s,u(s,Y_s)) \dif s \Big)\\ 
=& \varphi(Y_0)\exp(0) + \sum\limits_{i,j=1}^d \int\limits_0^t \exp\Big (\int\limits_0^s \Lambda(r,Y_r,u(r,Y_r)) \dif r \Big ) \partial_i  \varphi (Y_s)\Phi_{ij}(s,Y_s) \dif W_s^j \notag \\
&+\sum\limits_{i=1}^d\int\limits_0^t \exp\Big (\int\limits_0^s \Lambda(r,Y_r,u(r,Y_r)) \dif r \Big ) \partial_i \varphi (Y_s) b_i(s,Y_s,u(s,Y_s)) \dif s\\
&+\int\limits_0^t \exp\Big (\int\limits_0^s \Lambda(r,Y_r,u(r,Y_r)) \dif r \Big ) L_s \varphi(Y_s) \dif s  \\
&+ \int\limits_0^t \varphi(Y_s)\exp \Big (\int_0^s \Lambda(z,Y_z,u(z,Y_z))\dif z \Big ) \Lambda(s,Y_s,u(s,Y_s)) \dif s, 
\end{align*}
which almost finishes the proof. Indeed, we now only have to take the expectation using
Fubini's theorem to exchange the integral with respect to time with the expectation and then apply the third line 
of \eqref{eq:McKean_representation} which gives exactly \eqref{eq:weak_sol_def}.
\end{proof}

\subsubsection{Sketch of the proof of Item 2. of Theorem \ref{thm:representation}}
To establish item 2. of Theorem \ref{thm:representation}, we proceed through the following steps.
\begin{enumerate}
\item Let $u$ be a (weak) solution to \eqref{eq:PDE_original}
such that $(t,x) \mapsto b(t,x,u(t,x))$ and 
$(t,x) \mapsto \Lambda(t,x,u(t,x))$ 
are bounded.
\item By Proposition \ref{prop:equiv_mild_weak_sol}, $u$ is also a mild solution to \eqref{eq:PDE_original}.
\item Using Stroock-Varadhan arguments, see Lemma \ref{T127},
we construct a process $Y$ satisfying (in law) the SDE
\begin{align}\label{eq:just_the_SDE}
\begin{cases} 
Y_t&=Y_0+\int\limits_0^t \Phi(s,Y_s) \dif W_s+\int\limits_0^t (b_0(s,Y_s)+ b(s,Y_s,u(s,Y_s)) \dif s  \quad \forall t \in [0,T],\\
Y_0&\sim {\bf u_0}.
\end{cases}
\end{align}
\item Let $t \ge 0$. We can then define a measure $\mu_t$ on $(\mathbb{R}^d,\mathcal{B}^d)$ by setting for any measurable and bounded $\varphi:\mathbb{R}^d\rightarrow \mathbb{R}$ 
 \begin{align}
\int\limits_{\mathbb{R}^d} \varphi(x) \mu_t( \dif x):= 
 \mathbb{E}\left[ \varphi(Y_t) \exp\left(\int_0^t \Lambda(s,Y_s,u(s,Y_s))\dif s\right) \right].
 \label{eq:linking_almost}
 \end{align}
We show that $\mu_t$ is a solution in the sense of distributions
(i.e. a measure-weak solution in the sense of Definition~\ref{def:measure_weak_solution}) of the linear equation 
\eqref{eq:linearized_PDE}
 with $\hat \Lambda(s,y) = \Lambda(s,y,u(s,y))$, 
$\hat b(s,y) = b(s,y,u(s,y))$. In fact, we basically apply integration by parts and It\^o's formula on $\varphi(Y_t)\exp\left( \int_0^t \Lambda(s,Y_s, u(s,Y_s))
\dif s \right)$ similarly to the proof of Proposition \ref{prop:strong_sol_of_sde_is_weak_sol_of_pde}.  
According to Proposition \ref{prop:equiv_mild_weak_sol_lin}, this implies 
that $\mu$ is  a measure-mild solution of \eqref{eq:linearized_PDE} in the sense of Definition \ref{def:measure_mild_solution}. 
\item On the other hand, by item 2. $\nu_t(\dif x)= u(t,x)\dif x$
is obviously also a measure-mild solution of \eqref{eq:linearized_PDE}. We thus have two measure-mild solutions of \eqref{eq:linearized_PDE}, $\mu_t(\dif x)$ and $ \nu_t(\dif x).$
\item  Proposition \ref{prop:uniqueness_measure_mild} states uniqueness 
of measure-mild solutions to \eqref{eq:linearized_PDE}. This allows us to infer
$\mu_t(\dif x)=u(t,x)\dif x$, thereby completing the proof
of item 2. of Theorem \ref{thm:representation}.
\end{enumerate}
Item 3. can be established replacing Stroock-Varadhan arguments
(see Lemma \ref{T127})  with Lemma \ref{veretennikov} 
which states strong existence of \eqref{eq:just_the_SDE}.

\bigskip

\subsubsection{Sketch of the proof of Theorem \ref{thm:final}}
Now, we outline the proof of Theorem \ref{thm:final}.
\begin{description}
\item{1.}
Let $u$ be a mild solution of \eqref{eq:PDE_original}, as stated by Assumption \ref{C5} 
or guaranteed by item 1. of Theorem  \ref{thm:e_and_u}
 if Assumption \ref{A5} holds; in that latter case $u$ is bounded.
 According to Proposition \ref{prop:equiv_mild_weak_sol}, $u$ is then a
 weak solution of \eqref{eq:PDE_original}. Hence, by item 2. of Theorem \ref{thm:representation}, 
we conclude that there is a stochastic process $Y$ such that the couple $(Y,u)$ solves \eqref{eq:McKean_representation} in law. 
\item{2.}
As for uniqueness, suppose there are two solutions $(Y_1,u_1)$ and $(Y_2,u_2)$ of \eqref{eq:McKean_representation} such that $u_1, u_2$ are bounded.
In that case 
by Assumption \ref{A6}  
$(s,x) \mapsto b(s,x, u_i(s,x)), (s,x) \mapsto \Lambda(s,x,u_i(s,x))$
are bounded.
Then, by item 1. of 
Theorem \ref{thm:representation}, both $u_1$ and $u_2$ are weak solutions to \eqref{eq:PDE_original}. Then, by Proposition \ref{prop:equiv_mild_weak_sol}, $u_1$ and $u_2$ are also mild solutions to \eqref{eq:PDE_original}. 
But by item 2. of Theorem \ref{thm:e_and_u}, mild solutions are unique, i.e. $u_1=u_2$. 
We conclude 
by Lemma \ref{T127}
that the law of $Y_1$ is the same as the law of $Y_2$.
\item{3.} The same proof works substituting item 2.  
of Theorem \ref{thm:representation} with item 3.
and Lemma  \ref{T127} with Lemma \ref{veretennikov}.
 \end{description}
In the sequel of the paper we will state  and establish
the main ingredients of these proofs.

\section{Equivalence of weak and mild solutions
of the nonlinear PDE}
\label{S3bis}

The main result of this section is 
 Proposition~\ref{prop:equiv_mild_weak_sol}, which specifies conditions under which weak and mild solutions to \eqref{eq:PDE_original} are equivalent.

\begin{prop} \label{prop:equiv_mild_weak_sol}
 Let $u$
 such that $(t,x) \mapsto b(t,x,u(t,x))$  and 
$(t,x) \mapsto \Lambda(t,x,u(t,x))$  are bounded.
Assume \ref{A1} and \ref{A4}. 
\begin{enumerate}
\item If $u$ 
  is  a mild solution of \eqref{eq:PDE_original}
then it is also a weak solution.
\item Conversely, if Assumption \ref{C} holds and
$u$ is a weak solution of \eqref{eq:PDE_original}, 
then $u$ is also a mild solution of \eqref{eq:PDE_original}.
\end{enumerate}
\end{prop}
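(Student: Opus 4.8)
The plan is to treat the two implications separately, using Fubini's theorem and the defining property \eqref{eq:rather_implicit} of the fundamental solution $p$ as the main bridge between the integral (mild) formulation and the distributional (weak) formulation. Throughout, the boundedness of $(t,x)\mapsto b(t,x,u(t,x))$ and $(t,x)\mapsto\Lambda(t,x,u(t,x))$ together with the Gaussian upper bounds \eqref{eq:1011a}--\eqref{eq:1011} on $p$ and $\partial_{x_0}p$ guarantee that all the multiple integrals appearing below are absolutely convergent, so that every interchange of integration order is legitimate; I would isolate this integrability bookkeeping once at the start so as not to repeat it.

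For item~1 (mild $\Rightarrow$ weak), I would start from the mild formula \eqref{eq:mild_sol_def} for $u(t,\cdot)$, multiply by a test function $\varphi\in C_0^\infty$, and integrate in $x$ over $\R^d$. The first term $\int_{\R^d}\varphi(x)\int_{\R^d}p(0,x_0,t,x)\mathbf{u_0}(\dif x_0)\dif x$ is handled by Fubini and by applying \eqref{eq:rather_implicit} with $s=0$, $\nu_0=\mathbf{u_0}$: it equals $\int\varphi\,\dif\mathbf{u_0}+\int_0^t\int L_r\varphi(x)\,\nu_0(r,x)\dif x\,\dif r$, where $\nu_0(r,x)=\int p(0,x_0,r,x)\mathbf{u_0}(\dif x_0)$. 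For the two remaining terms of \eqref{eq:mild_sol_def} — the $\Lambda$-term and the $b$-term with $\partial_{x_0,j}p$ — I would, after interchanging the $x$-integral with the $\dif x_0\,\dif s$-integrals, recognize that the inner object $\int_{\R^d}p(s,x_0,t,x)\varphi(x)\dif x$ (resp.\ its behaviour under the flow) must again be expanded via \eqref{eq:rather_implicit}, now started at time $s$ from the measure with density $\hat h(s,x_0):=u(s,x_0)\Lambda(s,x_0,u(s,x_0))$ (resp.\ a term involving $u(s,x_0)b_j(s,x_0,u(s,x_0))$, where the derivative on $p$ is transferred back onto $\varphi$ by an integration by parts in $x_0$, producing the $\partial_j\varphi$ that appears in \eqref{eq:weak_sol_def}). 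Collecting all contributions, writing the double time integral $\int_0^t\!\int_s^t(\cdots)\dif r\,\dif s$ and swapping to $\int_0^t\!\int_0^r(\cdots)\dif s\,\dif r$, the terms reassemble into exactly the weak formulation \eqref{eq:weak_sol_def}. The technical care here is the integration by parts in $x_0$ transferring $\partial_{x_0,j}p$ to $\partial_j\varphi$: this is where the distributional-differentiability hypothesis \ref{A4}(a) and the bound \eqref{eq:1011} are used to justify the vanishing of boundary terms.

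For item~2 (weak $\Rightarrow$ mild under \ref{C}), the natural strategy is uniqueness-by-subtraction rather than a direct computation. Given a weak solution $u$, define $\tilde u$ by the right-hand side of \eqref{eq:mild_sol_def}; by the integrability remarks this $\tilde u\in L^1([0,T]\times\R^d)$, and by item~1 (applied to the integrands built from $u$, which are the same data) $\tilde u$ is a weak solution of the \emph{linear} equation with frozen coefficients $\hat\Lambda(s,x)=\Lambda(s,x,u(s,x))$, $\hat b(s,x)=b(s,x,u(s,x))$ — and so is $u$ itself, since a weak solution of \eqref{eq:PDE_original} is a fortiori a weak solution of that linearized equation. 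Hence $w:=u-\tilde u$, viewed as a measure-valued function $w(t,\cdot)\dif x$, is a measure-weak solution of the homogeneous problem $\partial_t w=L_t^* w$, $w(0,\cdot)=0$ (the $\hat\Lambda w$ and $\mathrm{div}(\hat b\,w)$ contributions cancel because both $u$ and $\tilde u$ carry the \emph{same} coefficients $\hat\Lambda,\hat b$ built from $u$). Assumption~\ref{C} then forces $w=0$, i.e.\ $u=\tilde u$, which is the mild property. The main obstacle I anticipate is precisely making the reduction to the linear homogeneous PDE rigorous: one must check that $u-\tilde u$ genuinely satisfies the homogeneous equation in the measure-weak sense of Definition~\ref{def:measure_weak_solution} with the coefficient-dependent terms exactly cancelling, and that the resulting function is admissible as a competitor in \ref{C} (finiteness of total variation on $[0,T]$, measurability in time). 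Once that bookkeeping is in place, \ref{C} closes the argument immediately.
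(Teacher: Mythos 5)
Your overall architecture matches the paper's: item~1 via Fubini plus the fundamental-solution identity \eqref{eq:rather_implicit}, and item~2 by comparing $u$ with the function $\tilde u$ defined by the right-hand side of \eqref{eq:mild_sol_def}, noting that both are weak solutions of the linearized equation with frozen coefficients $\hat\Lambda(s,x)=\Lambda(s,x,u(s,x))$, $\hat b(s,x)=b(s,x,u(s,x))$, so that $u-\tilde u$ solves \eqref{eq:uniqueness_pde} and vanishes by Assumption \ref{C}; this is essentially the paper's own argument for the converse direction.

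The gap lies in your treatment of the drift term, which is the only genuinely delicate point. You claim that in $\sum_j\int_0^t\int u\,b_j(\cdot,u)\,\partial_{x_0,j}p\,\dif x_0\,\dif s$ ``the derivative on $p$ is transferred back onto $\varphi$ by an integration by parts in $x_0$''. This cannot work as stated: $\varphi$ is a function of $x$, not of $x_0$, so an integration by parts in $x_0$ necessarily places the derivative on the only other $x_0$-dependent factor, namely $u(s,x_0)b_j(s,x_0,u(s,x_0))$ --- and this function is merely bounded and in $L^1$, not differentiable; avoiding precisely this differentiation is why the mild formulation carries $\partial_{x_0,j}p$ in the first place. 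Moreover, \eqref{eq:rather_implicit} is an identity for the kernel $p$ tested against initial measures and does not directly apply to the kernel $\partial_{x_0,j}p$. The paper closes this gap by an approximation argument (Lemma \ref{Lemma:equiv_mild_weak_sol_2}): first assume the drift datum is in $C_b^{0,1}$, integrate by parts in $x_0$ onto it and absorb $\sum_j\partial_j\tilde b_j$ into the zero-order term, reducing to the pure-$\Lambda$ case handled by \eqref{eq:rather_implicit}; then approximate general $L^1$ data by $C_0^{\infty}$ functions and pass to the limit in $L^1([0,T]\times\R^d)$, the drift contribution being controlled through the Gaussian gradient bound \eqref{eq:1011}, which produces an integrable factor $C_u(t-s)^{-1/2}$. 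To salvage your direct route you would instead have to formulate and justify a differentiated-in-$x_0$ version of \eqref{eq:rather_implicit}, valid for a.e.\ $x_0$ and pairable against bounded $L^1$ densities (the $\partial_j\varphi$ term then arising as the initial-time term of that identity, not from a boundary-free integration by parts); you neither state nor prove such a step. Since item~2 invokes the same machinery for $\tilde u$, the gap affects both items.
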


\noindent Proposition \ref{prop:equiv_mild_weak_sol} is a key result for this work because we do not have analytical tools at our disposal to establish existence or even uniqueness of a weak solution directly. However, we can apply fixed-point theorems to prove existence and uniqueness of mild solutions. Proposition \ref{prop:equiv_mild_weak_sol} states that this mild solution is also a weak solution which proves in particular the existence of weak solutions. Additionally, we can even infer on the uniqueness of weak solutions if we can ensure that \eqref{eq:uniqueness_pde} admits no weak solution other than $0$. For the proof of the Proposition we need a technical lemma.
\begin{lem} \label{Lemma:equiv_mild_weak_sol_2}
Let us suppose Assumptions \ref{A1} and \ref{A4}. Fix an arbitrary function $\tilde{\Lambda}:[0,T]\times \mathbb{R}^d \rightarrow \mathbb{R}$ in $L^1([0,T] \times \mathbb{R}^d)$ and an arbitrary function $\tilde{b}:[0,T]\times \mathbb{R}^d \rightarrow \mathbb{R}^d$ in $L^1([0,T] \times \mathbb{R}^d)$.
 Define the function $v:[0,T]\times \mathbb{R}^d \rightarrow \mathbb{R}$ by 
\begin{align}
v(t,x) = &\int\limits_{\mathbb{R}^d}  p(0,x_0,t,x) {\bf u_0}(\dif x_0) + \int\limits_0^t \int\limits_{\mathbb{R}^d}\tilde{\Lambda}(s,x_0) p(s,x_0,t,x) \dif x_0 \dif s \notag \\
&-\sum\limits_{j=1}^d \int\limits_0^t \int\limits_{\mathbb{R}^d} \tilde{b}_j(s,x_0)\partial_{x_0,j} p(s,x_0,t,x) \dif x_0 \dif s.
\label{eq:mild_sol_def_b_and_lambda_hat}
\end{align}
Then  for any $\varphi \in C_0^{\infty}(\mathbb{R}^d)$ and for any $t\in [0,T]$
\begin{align}
\int\limits_{\mathbb{R}^d} \varphi(x)v(t,x)\dif x = &\int\limits_{\mathbb{R}^d} \varphi(x) {\bf u_0}(\dif x) + \int\limits_0^t \int\limits_{\mathbb{R}^d} v(s,x)L_s \varphi(x) \dif x \dif s   \notag \\
& +\sum\limits_{j=1}^d \int\limits_0^t \int\limits_{\mathbb{R}^d} \partial_j (\varphi (x)) \tilde{b}_j(s,x) \dif x \dif s  + \int\limits_0^t \int\limits_{\mathbb{R}^d} \varphi(x)  \tilde{\Lambda}(s,x)\dif x\dif s. \label{eq:weak_sol_def_b_and_lambda_hat}
\end{align}
\end{lem}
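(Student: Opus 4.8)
The plan is to decompose $v=v_1+v_2+v_3$ according to the three summands on the right-hand side of \eqref{eq:mild_sol_def_b_and_lambda_hat} and to prove the three corresponding ``weak'' identities separately; summing them gives \eqref{eq:weak_sol_def_b_and_lambda_hat}. Throughout I write $P_{s,t}g(x):=\int_{\R^d}p(s,x_0,t,x)g(x_0)\dif x_0$. First I would record the integrability facts that legitimise every iterated integral below: since $\varphi\in C_0^\infty$ and $a,b_0$ are bounded (\ref{A1}), $L_s\varphi$ is bounded; by \eqref{Intequal1} one has $\int_{\R^d}p(s,x_0,t,x)\dif x=1$ and by \eqref{eq:1011} $\int_{\R^d}|\partial_{x_0}p(s,x_0,t,x)|\dif x\le C_u(t-s)^{-1/2}$; combined with $\tilde\Lambda,\tilde b\in L^1([0,T]\times\R^d)$ and $\int_s^T(t-s)^{-1/2}\dif t\le 2\sqrt T$, Fubini--Tonelli applies everywhere, and in particular $v\in L^1([0,T]\times\R^d)$. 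Step~1, the $\mathbf{u_0}$-term: $v_1(t,\cdot)$ is exactly the function $\nu_0(t,\cdot)$ attached to $\mathbf{u_0}$ in Definition~\ref{D3}, so \eqref{eq:rather_implicit} gives immediately that $\int\varphi(x)v_1(t,x)\dif x=\int\varphi(x)\mathbf{u_0}(\dif x)+\int_0^t\int L_s\varphi(x)\,v_1(s,x)\dif x\dif s$ for every $t$.

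Step~2, the $\tilde\Lambda$-term (a Duhamel argument). By Fubini, $\tilde\Lambda(s,\cdot)\in L^1(\R^d)$ for almost every $s$, so $\tilde\Lambda(s,x_0)\dif x_0$ is a finite signed measure. Property \eqref{eq:rather_implicit} of Definition~\ref{D3}, stated there for probability measures, extends by Hahn decomposition and scaling to any finite signed measure used as initial datum; applying it with initial time $s$ and initial datum $\tilde\Lambda(s,\cdot)\dif x_0$ gives, for almost every $s$ and all $t\ge s$,
\[
\int\varphi\,P_{s,t}\tilde\Lambda(s,\cdot)\,\dif x=\int\varphi\,\tilde\Lambda(s,\cdot)\,\dif x+\int_s^t\int L_r\varphi\,P_{s,r}\tilde\Lambda(s,\cdot)\,\dif x\,\dif r .
\]
Since $v_2(t,\cdot)=\int_0^t P_{s,t}\tilde\Lambda(s,\cdot)\,\dif s$, integrating this in $s$ over $[0,t]$ and interchanging the $\dif s\,\dif r$ integration (allowed by the bounds above) turns the double time integral into $\int_0^t\int L_s\varphi\,v_2(s,\cdot)\,\dif x\,\dif s$, whence $\int\varphi(x)v_2(t,x)\dif x=\int_0^t\int\varphi\,\tilde\Lambda(s,x)\dif x\dif s+\int_0^t\int L_s\varphi\,v_2(s,x)\dif x\dif s$ for every $t$.

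Step~3, the $\tilde b$-term (the delicate part). As $\tilde b$ carries no spatial regularity, the integration by parts moving $\partial_{x_0,j}$ off $p$ is not directly available, so I would mollify: set $\tilde b^n_j(s,\cdot):=\big(\tilde b_j(s,\cdot)\mathbf{1}_{B_n}\big)*\rho_n\in C_0^\infty(\R^d)$, so that $\tilde b^n_j\to\tilde b_j$ in $L^1([0,T]\times\R^d)$, and let $v_3^n$ be the analogue of $v_3$ with $\tilde b$ replaced by $\tilde b^n$. For this smooth integrand, Assumption~\ref{A4}(a) and \eqref{eq:1011} (which identify $x_0\mapsto\partial_{x_0,j}p(s,x_0,t,x)$ with an $L^1$ function representing the distributional derivative) permit an integration by parts in $x_0$ rewriting the $\tilde b^n$-part of $v_3^n$ as a $\tilde\Lambda$-type term with $\tilde\Lambda=-\operatorname{div}\tilde b^n$; Step~2 then applies to $v_3^n$, and a second integration by parts, now in $x$ (legitimate since $\varphi\in C_0^\infty$ and $\tilde b^n(s,\cdot)$ is smooth), shifts the derivative onto $\varphi$ and gives, for all $t$ (the signs tracked through the two integrations by parts),
\[
\int\varphi(x)v_3^n(t,x)\dif x=\int_0^t\int L_s\varphi(x)v_3^n(s,x)\dif x\dif s+\sum_{j=1}^d\int_0^t\int\partial_j\varphi(x)\,\tilde b^n_j(s,x)\dif x\dif s .
\]
To pass to the limit, \eqref{eq:1011} and $\int q(s,x_0,t,x)\dif x=1$ yield $\|v_3^n(t,\cdot)-v_3(t,\cdot)\|_{L^1(\R^d)}\le C_u\sum_j\int_0^t(t-s)^{-1/2}\|\tilde b^n_j(s,\cdot)-\tilde b_j(s,\cdot)\|_{L^1}\dif s$; integrating in $t$ and using $\int_s^T(t-s)^{-1/2}\dif t\le 2\sqrt T$ shows $v_3^n\to v_3$ in $L^1([0,T]\times\R^d)$, which handles the $L_s\varphi$-term, while the mollifier bound $\|\tilde b^n_j(s,\cdot)-\tilde b_j(s,\cdot)\|_{L^1}\le 2\|\tilde b_j(s,\cdot)\|_{L^1}$ together with dominated convergence give $\int\varphi\,v_3^n(t,\cdot)\to\int\varphi\,v_3(t,\cdot)$ for almost every $t$; finally $\sum_j\int_0^t\int\partial_j\varphi\,\tilde b^n_j\to\sum_j\int_0^t\int\partial_j\varphi\,\tilde b_j$ trivially. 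Summing the identities of Steps~1--3 yields \eqref{eq:weak_sol_def_b_and_lambda_hat} for almost every $t$; since both sides are continuous in $t$ (the right-hand side because $v,\tilde b,\tilde\Lambda\in L^1([0,T]\times\R^d)$, the left-hand side through the representation formula defining $v$), the identity holds for every $t\in[0,T]$.

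The step I expect to be the main obstacle is Step~3: the lack of spatial regularity of $\tilde b$ rules out a direct integration by parts against $\partial_{x_0}p$, and the only route around it I see --- mollify, reduce to the smooth case via Step~2, then remove the mollification --- works solely because the gradient bound \eqref{eq:1011} provides a Gaussian kernel whose $(t-s)^{-1/2}$ singularity is integrable in time, which is what makes $v_3^n-v_3$ small in $L^1$.
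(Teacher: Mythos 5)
Your proof is correct and follows essentially the same route as the paper's: handle smooth data first by integrating by parts in $x_0$ so that the divergence term becomes a zero-order source treated through the Duhamel/Fokker--Planck property of $p$, then remove the smoothness by $L^1([0,T]\times\R^d)$-approximation and pass to the limit using the Gaussian gradient bound \eqref{eq:1011} together with the integrable $(t-s)^{-1/2}$ singularity. The differences are organizational rather than substantive (you split $v$ by linearity and mollify only $\tilde b$, handling $\tilde\Lambda\in L^1$ directly via the signed-measure extension of Definition \ref{D3}, whereas the paper approximates both $\tilde b$ and $\tilde\Lambda$ by smooth compactly supported functions), and your intermediate sign claim $\tilde\Lambda=-\operatorname{div}\tilde b^n$ simply reflects the paper's own sign discrepancy between the minus sign in \eqref{eq:mild_sol_def_b_and_lambda_hat} and the plus sign in \eqref{eq:weak_sol_def_b_and_lambda_hat} (your bookkeeping is the one consistent with Definition \ref{def_mild_sol} and the stated conclusion).
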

\begin{proof}[\nopunct Proof.] The idea of the proof is to generalize the statement formulated in Step 0., below,  by approximating $b$
and $\Lambda$ with smooth functions. It is thus natural to structure the proof as follows. \\ \\
\textbf{Step 0. Fubini and the fundamental solution}\\ \\	
\noindent Define the function $\tilde{v}:[0,T]\times \mathbb{R}^d \rightarrow \mathbb{R}$ by
\begin{equation} \label{eq:varation_mild_sol_Lemma}
\tilde{v}(s,x)=\int\limits_{\mathbb{R}^d}  p(0,x_0,s,x) {\bf u_0}(\dif x_0)+ \int\limits_0^s \int\limits_{\mathbb{R}^d} \tilde{\Lambda}(r,x_0) p(r,x_0,s,x) \dif x_0 \dif r. 
\end{equation}
Using Fubini's theorem and the definition of the fundamental solution, it is easy to check that $\tilde{v}$ satisfies for any test function $\varphi \in C_0^{\infty}(\mathbb{R}^d)$ and any $t\in [0,T]$
\begin{equation}\label{eq:weak_sol_rearranged}
\int\limits_0^t \int\limits_{\mathbb{R}^d} \tilde{v}(s,x)L_s \varphi(x) \dif x \dif s= 
\int\limits_{\mathbb{R}^d} \varphi(x)\tilde{v}(t,x)\dif x -\int\limits_{\mathbb{R}^d} \varphi(x)\tilde{v}(0,x)\dif x \notag \\ 
- \int\limits_0^t \int\limits_{\mathbb{R}^d} \varphi(x)  \tilde{\Lambda} (s,x) \dif x\dif s. 
\end{equation} 
\textbf{Step 1. Suppose that the mapping} 
$\mathbf{x \mapsto \tilde{\Lambda}(t,x)}$ \textbf{is bounded and that the map }\\
$\mathbf{(t,x) \mapsto \tilde{b}_j(t,x)}$ 
\textbf{belongs to} $\mathbf{C_b^{0,1}}$ \textbf{ for all } $\mathbf{j=1,\ldots,d}$.\\ \\   
In this case, \eqref{eq:mild_sol_def_b_and_lambda_hat} can be manipulated by integration by parts to read
$$
v(s,x) 
= \int\limits_{\mathbb{R}^d} p(0,x_0,s,x) \mathbf{u}_0(\dif x_0) \\
 + \int\limits_0^s \int\limits_{\mathbb{R}^d} \left[\tilde{\Lambda}(r,x_0)+ \left(\sum\limits_{j=1}^d \partial_{x_0,j} \tilde{b}_j(r,x_0))\right) \right] p(r,x_0,s,x) \dif x_0 \dif r.
$$
Note that the boundary term in the integration by parts vanishes because $\tilde{b}$ is bounded and $p$ satisfies inequality \eqref{eq:1011}. Because $\tilde{b}$ has bounded derivatives, the function, $\Psi:[0,T]\times \mathbb{R}^d \rightarrow \mathbb{R}$, such that $\Psi(t,x):=\tilde{\Lambda}(t,x)+ \sum_{j=1}^d \partial_{j}\tilde{b}_j(t,x)$ is bounded, too. Thus we can use the statement of Step
 0. with $\tilde{\Lambda}(t,x):=\Psi(t,x)$.
Thus we immediately obtain the desired equation \eqref{eq:weak_sol_def_b_and_lambda_hat} by inserting the definition of $\Psi$ and using the linearity of the integral and integrating by parts in the reverse sense of above computation. \\ \\ 
\noindent \textbf{Step 2. Assume that the maps}  $\mathbf{(t,x) \mapsto \tilde{b}_j(t,x)}$ \textbf{ and } $\mathbf{(t,x) \mapsto \tilde{\Lambda}(t,x)}$ \textbf{are in} $\mathbf{L^1([0,T]\times \R^d)}$ \textbf{ for all } $\mathbf{j=1,\ldots, d}$.\\ \\
Because $C^\infty_0([0,T]\times \R^d)$ is dense in $L^1([0,T]\times \R^d)$, there  exist sequences $(b_{n,j})_{n\in \N}$, $j=1,\ldots, d$ and $(\Lambda_n)_{n\in \N}$, in $C^\infty_0([0,T]\times \R^d)$, such that $(b_{n,j})_{n\in \N}$ converges to $\tilde{b}_{j}$ and $(\Lambda_n)_{n\in \N}$ converges to $\tilde{\Lambda}$ with respect to $\left\Vert \cdot \right\Vert_{L^1([0,T]\times \R^d)}$. For any $n \in \mathbb{N}$, we define the function $v_n:[0,T]\times \mathbb{R}^d \rightarrow \mathbb{R}$ by
$$
 v_n(s,x) = \int\limits_{\mathbb{R}^d} p(0,x_0,s,x) \mathbf{u}_0(\dif x_0) +
 G_n(s,x) + F_{n,j}(s,x),$$ 
where
\begin{eqnarray*}
G_{n} (s,x)&:=&  \int\limits_0^s \int\limits_{\mathbb{R}^d} \Lambda_n (r,x_0) p(r,x_0,s,x) \dif x_0 \dif r\\ 
 F_{n,j}(s,x)&:=&\int\limits_0^s \int\limits_{\mathbb{R}^d} b_{n,j} (r,x_0)\partial_{x_0,j} p(r,x_0,s,x) \dif x_0 \dif r.
\end{eqnarray*}
We will prove that $F_{n,j}$ converges, as $n$ tends to infinity, in $\left\Vert \cdot \right\Vert_{L^1([0,T]\times \R^d)}$, to $F_{j}: [0,T]\times \R^d \rightarrow \mathbb{R}$ such that 
\begin{align*}
F_j(s,x):=\int\limits_0^s \int\limits_{\mathbb{R}^d} \tilde{b}_j (r,x_0)\partial_{x_0,j} p(r,x_0,s,x) \dif x_0 \dif r\ .
\end{align*}
Indeed, by \eqref{eq:1011} and the fact that
$ q(r,x_0,s, x)$  is a probability density for almost all $r, x_0, s$ and Tonelli
we get
\begin{align*} 
 \Vert F_{n,j}(s,x) - F_j(s,x) \Vert_{L^1([0,T]\times \R^d)} &
 \leq \int\limits_0^T \int\limits_{\R^d} 
\int\limits_0^s \int\limits_{\mathbb{R}^d} \left\vert b_{n,j} (r,x_0)- \tilde{b}_j (r,x_0)\right\vert  
\left\vert \partial_{x_0,j} p(r,x_0,s,x)\right\vert
 \dif x_0 \dif r \dif x \dif s\\
&\leq \int\limits_0^T \int\limits_{\R^d} 
\int\limits_0^s \int\limits_{\mathbb{R}^d} \left\vert b_{n,j} (r,x_0)- \tilde{b}_j (r,x_0)\right\vert  \frac{C_u}{\sqrt{s-r}}q(r,x_0,s,x) \dif x_0 \dif r \dif x \dif s\\
&= \int\limits_0^T \int\limits_{\R^d} 
\int\limits_0^s \left\vert b_{n,j} (r,x_0)- \tilde{b}_j (r,x_0)\right\vert  \frac{C_u}{\sqrt{s-r}} 
\int\limits_{\mathbb{R}^d}q(r,x_0,s,x) \dif x
\dif r \dif x_0 \dif s \\
&=  C_u \int\limits_0^T \left\Vert b_{n,j} (r,\cdot)- \tilde{b}_j (r,\cdot)\right\Vert_{L^1}
\int\limits_r^T \frac{1}{\sqrt{s-r}} \dif s \dif r \\
&\leq 2C_u \sqrt{T}  \int\limits_0^T \left\Vert b_{n,j} (r,\cdot)- 
\tilde{b}_j (r,\cdot)\right\Vert_{L^1} \dif r = 
2C_u \sqrt{T} \left\Vert b_{n,j}-\tilde{b}_j \right\Vert_{L^1([0,T]\times \R^d)}. 
\end{align*}
To conclude, we now simply take the limit $n\rightarrow \infty$. With similar 
arguments, we prove that $G_{n}[0,T]\times \R^d \rightarrow \mathbb{R}$ converges, as $n$ tends to infinity, in $\left\Vert \cdot \right\Vert_{L^1([0,T]\times \R^d)}$, to $G: [0,T]\times \R^d \rightarrow \mathbb{R}$, where 
$$
G(s,x) :=\int\limits_0^s \int\limits_{\mathbb{R}^d} \tilde{\Lambda} (r,x_0) p(r,x_0,s,x) \dif x_0 \dif r.$$
From this, we conclude that $v_n$ converges in $\left\Vert \cdot \right\Vert_{L^1([0,T]\times \R^d)}$ to $v$ as defined in \eqref{eq:mild_sol_def_b_and_lambda_hat}. 
According to Step 1. of the proof, for any $n\in \mathbb{N}$ and any $\varphi \in C_0^{\infty}(\R^d)$
we also have 
\begin{align*}
\int\limits_{\mathbb{R}^d} \varphi(x)v_n(t,x)\dif x = &\int\limits_{\mathbb{R}^d} \varphi(x)v(0,x)\dif x + \int\limits_0^t \int\limits_{\mathbb{R}^d} v_n(s,x)L_s \varphi(x) \dif x \dif s   \notag \\
& +\sum\limits_{j=1}^d \int\limits_0^t \int\limits_{\mathbb{R}^d} \partial_j (\varphi (x)) b_{n,j}(s,x) \dif x \dif s  + \int\limits_0^t \int\limits_{\mathbb{R}^d} \varphi(x)  {\Lambda}_n(s,x)\dif x\dif s.
\end{align*}
We now take the limit as $n$ tends to infinity on both sides of this equation and only have to justify that we can exchange the limits and the integrals. For this, it suffices to observe that whenever a sequence of functions converges in any $L^1$-space, then multiplying this function with a bounded function preserves the $L^1$-convergence. 
\end{proof}
\noindent Now that we have completed the proof of Lemma \ref{Lemma:equiv_mild_weak_sol_2}, we turn to the proof of Proposition \ref{prop:equiv_mild_weak_sol}. 
\begin{proof}[Proof \nopunct] (of Proposition \ref{prop:equiv_mild_weak_sol}).
\begin{description}
\item{a)}
First suppose that $u$ is a mild solution  of PDE \eqref{eq:PDE_original} in the sense of Definition \ref{def_mild_sol}. \\
Then we set $\tilde{\Lambda}(s,x)=\Lambda(s,x,u(s,x))u(s,x)$ and 
$\tilde{b}(s,x)=b(s,x,u(s,x))u(s,x)$ which are both in $L^1([0,T]\times \R^d)$ because $\Lambda(s,x,u(s,x))$ and $b(s,x,u(s,x))$ are bounded by assumption and $u$ is in $L^1([0,T]\times \R^d)$ according to the definition of the mild solution. Hence we can apply Lemma \ref{Lemma:equiv_mild_weak_sol_2} 
which directly yields that $u$ is a  weak solution. 
\item{b)}
Now suppose that $u$ is a weak solution of PDE \eqref{eq:PDE_original} in the sense of Definition \ref{def_weak_sol}. \\  
Then, we define
\begin{align*}
v(t,x):=&\int\limits_{\mathbb{R}^d} p(0,x_0,t,x)\mathbf{u_0} ( \dif x_0)+ \int\limits_{0}^t \int\limits_{\mathbb{R}^d} u(s,x_0)\Lambda(s,x_0,u(s,x_0))p(s,x_0,t,x) \dif x_0 \dif s \\ 
&+ \sum\limits_{j=1}^d \int\limits_{0}^t \int\limits_{\mathbb{R}^d} u(s,x_0) (b_j(u,x_0,u(s,x_0)) \partial_{x_0,j}  p(s,x_0,t,x) \dif x_0 \dif s.
\end{align*}
The strategy of this proof is to establish that  $v=u$. Again, we apply Lemma \ref{Lemma:equiv_mild_weak_sol_2} with $\tilde{\Lambda}(s,x)=\Lambda(s,x,u(s,x))u(s,x)$ and $\tilde{b}(s,x)=b(s,x,u(s,x))u(s,x)$ on $v$ and infer that $v$ is a weak solution of the PDE
\begin{align}
\begin{cases}
\partial_t v(t,x) &= L_t^* v(t,x) -\sum\limits_{j=1}^d \partial_j (b_j(t,x,u(t,x))u(t,x)) + u(t,x)\Lambda(t,x,u(t,x)) \\
v(0,\cdot)&=u_0.
\end{cases} \label{eq:PDE_proof_equivalence_mild_weak}
\end{align}
By assumption, $u$ is a weak solution of PDE \eqref{eq:PDE_original}, so it is in particular a weak solution of \eqref{eq:PDE_proof_equivalence_mild_weak}. 
Hence $u-v$ is a solution of the PDE \eqref{eq:uniqueness_pde}.
 But as by Assumption~\ref{C} the unique weak solution of \eqref{eq:uniqueness_pde} is the constant function equal to zero, we conclude 
that $v=u$ is a mild solution of~\eqref{eq:PDE_original}.
\end{description}
\end{proof}

\section{Existence and uniqueness of a mild solution to the nonlinear PDE}\label{S4}
In this section Assumption~\ref{A4} is in force.
We fix some 
bounded Borel measurable functions
$\hat{\Lambda}:[0,T]\times \mathbb{R}^d  \rightarrow \mathbb{R}$ and $\hat{b}:[0,T]\times \mathbb{R}^d  \rightarrow \mathbb{R}^d$. 

We begin by introducing the notion of a measure-mild solution of the linearized equation
\begin{align}
\label{eq:linearized_PDE}
\begin{cases} 
\partial_t v(t,x) &= L_t^* v(t,x) -\sum \limits_{j=1}^d \partial_j (\hat{b}_j(t,x)v(t,x))+ \hat{\Lambda}(t,x)v(t,x), \\	
v(0,\cdot)&=\mathbf{u_0}.
\end{cases}
\end{align}
In fact \eqref{eq:linearized_PDE}  constitutes the
linearized version of PDE \eqref{eq:PDE_original}.

\subsection{Notion of measure-mild solution}
\label{SMeasureMild}

Recall that $p$ denotes the fundamental solution to \eqref{eq:FokkerPlanck}
as introduced in Definition \ref{D3}.
\begin{defi} \label{def:measure_mild_solution}
Assume \ref{A4} and that $\hat{b}$ and $\hat{\Lambda}$ are bounded. A measure-valued map $\mu : [0,T] \rightarrow \mathcal{M}_f(\mathbb{R}^d)$ will be called \textbf{measure-mild solution} of \eqref{eq:linearized_PDE} if  for all $t\in [0,T]$
\begin{align}
\label{eq:measure_mild_sol_def}
 \mu_t (\dif x) =  &\int\limits_{\mathbb{R}^d} p(0,x_0,t,x)\dif x  {\bf u_0} ( \dif x_0) + \int\limits_0^t \int\limits_{\mathbb{R}^d}\hat{\Lambda}(r,x_0) p(r,x_0,t,x) \dif x \mu_r(\dif x_0) \dif r \notag \\
&+ \sum\limits_{j=1}^d \int\limits_0^t \int\limits_{\mathbb{R}^d} \hat{b}_j(r,x_0) \partial_{x_0,j} p(r,x_0,t,x)\dif x \mu_r(\dif x_0) \dif r.
\end{align}
\end{defi} 
\begin{rem} \label{rem:measure_mild_def_test} \hfill 
\begin{enumerate}
\item Note that a measure is characterized by the integrals of all smooth functions with bounded support. This is why the definition of a measure-mild solution in  \eqref{eq:measure_mild_sol_def} is equivalent to require that for any $\varphi \in C_0^{\infty}$ and for any $t\in [0,T]$
\begin{align*}
\int\limits_{\mathbb{R}^d} \varphi(x) \mu_t(\dif x) =  &\int\limits_{\mathbb{R}^d} \Big ( \int\limits_{\mathbb{R}^d} \varphi(x)  p(0,x_0,t,x)\dif x \Big ) {\bf u_0} ( \dif x_0) \notag \\
&+ \int\limits_0^t \int\limits_{\mathbb{R}^d}\hat{\Lambda}(r,x_0) \Big ( \int\limits_{\mathbb{R}^d} \varphi(x) p(r,x_0,t,x)\dif x \Big ) \mu_r(\dif x_0) \dif r \notag \\
&+ \sum\limits_{j=1}^d \int\limits_0^t \int\limits_{\mathbb{R}^d} \hat{b}_j(r,x_0) \Big ( \int\limits_{\mathbb{R}^d} \varphi(x) \partial_{x_0,j} p(r,x_0,t,x)\dif x \Big ) \mu_r(\dif x_0) \dif r.
\end{align*}
\item If one compares the definition of measure-mild solutions, setting
$$\hat{b}(s,x) = b(s,x,u(s,x)), \ 
\hat{\Lambda}(s,x) = \Lambda(s,x,u(s,x)), $$
to the one of mild solutions in Definition \ref{def_mild_sol}, it first appears that the only difference being that $u(t,x)\dif x$ has been replaced by $\mu(t,\dif x)$.  
 In particular, if $u$ is a mild solution of \eqref{eq:PDE_original},
see Definition \ref{def_mild_sol},
then $\mu_t(\dif x) := u(t,x) \dif x$  is a measure-mild solution
of \eqref{eq:linearized_PDE}. 
\end{enumerate}
\end{rem}
In Proposition \ref{prop:uniqueness_measure_mild} we will  show
under Assumption \ref{A4} uniqueness of
measure-mild solution of the {\it linear} equation
\eqref{eq:linearized_PDE}.

\subsection{Proof of Theorem \ref{thm:e_and_u}}

We will adapt the construction of a mild solution given in \cite{LOR3} to the case of PDE \eqref{eq:PDE_original}.
We will apply the Banach fixed-point theorem to a suitable mapping in order to prove existence and uniqueness of a mild solution on a \textit{small} time interval. \\ 
We will frequently make use of the notation $L^1([t_1,t_2],L^1(\mathbb{R}^d))$ for $t_1<t_2$, which is the set of functions $f$ on $[t_1,t_2]$ with values in $L^1(\mathbb{R}^d)$ such that $\Vert f \Vert_1<\infty$, where this norm is defined as 
\begin{align}
\Vert f \Vert_1:=\int\limits_{t_1}^{t_2} \Vert f(s)(\cdot) \Vert_{L^1(\mathbb{R}^d)} \dif s \label{eq:def_norm_l1}.
\end{align}
Fix $\phi \in L^1(\mathbb{R}^d)\cap L^{\infty}(\mathbb{R}^d), r \in [0,T[$ and $\tau \in ]0,T-r].$
We define $\widehat{u_0}(r,\phi):[r,r+\tau]\times \mathbb{R}^d \rightarrow \R $
 by
\begin{align}
\widehat{u_0}(r,\phi)(t,x)&:=\int\limits_{\mathbb{R}^d} p(r,x_0,t,x) \phi(x_0)\dif x_0. \label{eq:def_u_zero_hat}
\end{align}
We will often forget to indicate the $r$ parameter and note $\widehat{u_0}(\phi)$ instead of $\widehat{u_0}(r,\phi)$. 
By \eqref{eq:1011a}, the following bounds hold 
\begin{equation} \label{E30} 
  \Vert \widehat{u_0}(r,\phi)  \Vert_1 \le \Vert \phi \Vert_1\quad \textrm{and}\quad 
\Vert \widehat{u_0}(r,\phi) \Vert_\infty \le C_u \Vert \phi \Vert_\infty.
\end{equation}
We define $\Pi: L^1([r,r+\tau],L^1(\mathbb{R}^d)) \rightarrow  L^1([r,r+\tau],L^1(\mathbb{R}^d))$ by
\begin{align}
\Pi(r,v)(t,x):=&\int\limits_r^t \int\limits_{\mathbb{R}^d} p(s,x_0,t,x) \hat{\Lambda}(s,x_0,v+\widehat{u_0}(\phi)) \dif x_0  \dif s \notag \\
&+\sum\limits_{j=1}^d  \int\limits_r^t \int\limits_{\mathbb{R}^d} \partial_{x_0,j} p(s,x_0,t,x) \hat{b}_j(s,x_0,v+\widehat{u_0}(\phi)) \dif x_0  \dif s, 
\label{eq:def_pi}
\end{align}
where we have used the shorthand notation
\begin{align*}
\hat{\Lambda}(s,x_0,v+\widehat{u_0}(\phi)) 
&:= \Lambda(s,x_0,v(s,x_0)+\widehat{u_0}(\phi)(s,x_0)) (v(s,x_0)+\widehat{u_0}(\phi)(s,x_0)) ,
\notag \\
\hat{b}(s,x_0,v+\widehat{u_0}(\phi)) 
&:= b(s,x_0,v(s,x_0)+\widehat{u_0}(\phi)(s,x_0)) (v(s,x_0)+\widehat{u_0}(\phi)(s,x_0)).
\end{align*}
We remark that the map $\Pi$ depends on $r, \tau$ but this
will be omitted in the sequel. 
We will denote the centered closed ball of radius $M$ in $L^1(\mathbb{R}^d)$ by $B[0,M]$  and the closed centered ball in $L^{\infty}([r,r+\tau]\times \mathbb{R}^d, \mathbb{R})$ by $B_{\infty}[0,M]$. The following lemma will illuminate why we have engaged in defining these objects.
\begin{lem} \label{lem:fixed_point_prep}
Assume \ref{A1}, \ref{A4}, \ref{A5}, \ref{A6} and \ref{A7}.
\begin{enumerate}
\item For every $(r, \tau)$, such that $r \in [0,T[, \tau \in ]0,T-r]$,
 $L^1([r,r+\tau],B[0,M])\cap B_{\infty}[0,M]$ is a 
closed subset of $L^1([r,r+\tau]\times \R^d) $.  
In particular, it is a complete metric space equipped with $\left\Vert \cdot \right\Vert_1$, defined in \eqref{eq:def_norm_l1}, with $t_1 = r$ and 
$t_2 = r+\tau$.
\item There exists $\tau >0$ such that for any $r\in [0,T-\tau]$, 
we have 
$$\Pi (L^1([r,r+\tau],B[0,M])\cap B_{\infty}[0,M])\subset (L^1([r,r+\tau],B[0,M])\cap B_{\infty}[0,M]).$$
\item Let $\tau$ be as in previous item. There exists an integer $k_0 \in \mathbb{N}$, such that $\Pi
^{k_0}$ is a contraction on $L^1([r,r+\tau],B[0,M])
\cap B_{\infty}[0,M]$ for any 
$r\in [0,T-\tau]$.
\end{enumerate}
In particular, for any $r\in [0,T-\tau]$, there exists a unique fixed-point of $\Pi$. This fixed-point is in 
$L^1([r,r+ \tau],B[0,M])\cap B_{\infty}[0,M].$
\end{lem}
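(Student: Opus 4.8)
The plan is to verify items 1--3 in turn and then to read off the last assertion from the Banach fixed-point theorem applied to $\Pi^{k_0}$. For item 1 I would use the canonical isometric identification $L^1([r,r+\tau],L^1(\R^d))\cong L^1([r,r+\tau]\times\R^d)$, under which the set in question is exactly $\{v:\ \Vert v(s,\cdot)\Vert_{L^1(\R^d)}\le M$ for a.e. $s,\ \Vert v\Vert_\infty\le M\}$. Closedness of this set in $L^1([r,r+\tau]\times\R^d)$ is then routine: from a sequence in the set converging in $L^1$ I extract a subsequence converging a.e. on $[r,r+\tau]\times\R^d$; the a.e. bound $|v_n|\le M$ passes to the limit, giving $\Vert v\Vert_\infty\le M$, while Fatou's lemma in the space variable preserves $\Vert v(s,\cdot)\Vert_{L^1(\R^d)}\le M$ for a.e. $s$. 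Being closed in the complete space $L^1([r,r+\tau]\times\R^d)$, the set is complete for $\Vert\cdot\Vert_1$.

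For item 2 I would estimate $\Pi(r,v)$ directly from \eqref{eq:def_pi}. Writing $w:=v+\widehat{u_0}(\phi)$ and using Assumption \ref{A5} to bound $|\hat\Lambda(s,x_0,w)|\le M_\Lambda|w(s,x_0)|$ and $|\hat b(s,x_0,w)|\le M_b|w(s,x_0)|$, together with $\int_{\R^d}p\,\dif x=1$, $p\le C_u q$, $\int_{\R^d}q\,\dif x=1$ and the gradient bound \eqref{eq:1011}, I expect (integrating the $x$-variable first) that for a.e. $t$
\begin{align*}
\Vert\Pi(r,v)(t,\cdot)\Vert_{L^1(\R^d)}
&\le C_uM_\Lambda\int_r^t\Vert w(s,\cdot)\Vert_{L^1}\,\dif s+dC_uM_b\int_r^t\frac{\Vert w(s,\cdot)\Vert_{L^1}}{\sqrt{t-s}}\,\dif s\\
&\le (M+\Vert\phi\Vert_{L^1})\bigl(C_uM_\Lambda\tau+2dC_uM_b\sqrt\tau\,\bigr),
\end{align*}
where \eqref{E30} was used for $\Vert w(s,\cdot)\Vert_{L^1}\le M+\Vert\phi\Vert_{L^1}$; an entirely parallel computation with the sup-norm gives $\Vert\Pi(r,v)\Vert_\infty\le(M+C_u\Vert\phi\Vert_{L^\infty})\bigl(C_uM_\Lambda\tau+2dC_uM_b\sqrt\tau\,\bigr)$. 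The bracketed factor tends to $0$ as $\tau\downarrow0$ and all constants are independent of $r$, so for the given $M$ I can fix one $\tau>0$ so small that both right-hand sides are $\le M$; this is item 2.

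Item 3 is the heart of the matter. Differencing $\Pi(r,v^1)$ and $\Pi(r,v^2)$ for $v^1,v^2$ in the set and using $\hat\Lambda(s,x_0,w)=\Lambda(s,x_0,w)\,w$ together with Assumptions \ref{A5} and \ref{A6}, one gets the Lipschitz bounds $|\hat\Lambda(s,x_0,w^1)-\hat\Lambda(s,x_0,w^2)|\le(M_\Lambda+L_\Lambda M_\infty)|v^1-v^2|$ and $|\hat b_j(s,x_0,w^1)-\hat b_j(s,x_0,w^2)|\le(M_b+L_bM_\infty)|v^1-v^2|$, with $M_\infty:=M+C_u\Vert\phi\Vert_{L^\infty}$ --- here item 2 is essential, since it guarantees $\Vert w^i\Vert_\infty\le M_\infty$ along all iterates of $\Pi$. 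Arguing as in item 2 (and absorbing the lower-order $\int_r^t(\cdot)\,\dif s$ term coming from $\hat\Lambda$ via $\int_r^t g\le\sqrt\tau\int_r^t g(s)(t-s)^{-1/2}\dif s$), I obtain a constant $C$ depending only on $M,\phi,\tau$ (not on $r$) with $\Vert\Pi(r,v^1)(t,\cdot)-\Pi(r,v^2)(t,\cdot)\Vert_{L^1(\R^d)}\le C\int_r^t (t-s)^{-1/2}\,\Vert v^1(s,\cdot)-v^2(s,\cdot)\Vert_{L^1(\R^d)}\,\dif s$. Iterating this Volterra inequality and using the semigroup property of the Riemann--Liouville fractional integral --- the $k$-fold time-convolution of $(t-s)^{-1/2}$ equals $\Gamma(1/2)^k\,\Gamma(k/2)^{-1}(t-s)^{k/2-1}$ --- and then integrating in $t$ over $[r,r+\tau]$, I expect
\[
\Vert\Pi^k(r,v^1)-\Pi^k(r,v^2)\Vert_1\le\frac{\bigl(C\sqrt{\pi\tau}\,\bigr)^k}{\Gamma(k/2+1)}\,\Vert v^1-v^2\Vert_1
\]
uniformly in $r\in[0,T-\tau]$. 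Since $\Gamma(k/2+1)$ outgrows any geometric sequence, the prefactor is $<1$ for some $k_0\in\N$ independent of $r$, which gives item 3.

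Finally, for each $r\in[0,T-\tau]$ the map $\Pi^{k_0}(r,\cdot)$ is, by items 1--3 (item 2 being what makes it map the complete metric space $L^1([r,r+\tau],B[0,M])\cap B_\infty[0,M]$ into itself), a contraction of that space; Banach's theorem yields a unique fixed point $v^\ast$ of $\Pi^{k_0}(r,\cdot)$ there. Then $\Pi(r,v^\ast)$ is again fixed by $\Pi^{k_0}(r,\cdot)$, hence equals $v^\ast$, and any fixed point of $\Pi(r,\cdot)$ in the set is fixed by $\Pi^{k_0}(r,\cdot)$ and therefore equals $v^\ast$; this is the concluding assertion. The one genuine difficulty is the singular kernel $(t-s)^{-1/2}$ inherited from the gradient estimate \eqref{eq:1011}: it prevents $\Pi$ itself from being a contraction and forces the fractional-convolution bookkeeping above in order to extract a power $\Pi^{k_0}$ that contracts simultaneously for every $r\in[0,T-\tau]$; items 1 and 2 are otherwise standard.
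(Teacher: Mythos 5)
Your proposal is correct and follows essentially the same route as the paper: closedness via an a.e.-convergent subsequence, invariance of the ball for small $\tau$ from the Gaussian bounds \eqref{eq:1011a}--\eqref{eq:1011} and Assumption \ref{A5}, and a singular Volterra iteration showing that some power of $\Pi$ contracts, concluded by Banach's fixed-point theorem. The only cosmetic differences are that you iterate the kernel $(t-s)^{-1/2}$ in one stroke via the Riemann--Liouville composition formula, whereas the paper pairs iterations two at a time using $B\left(\tfrac12,\tfrac12\right)=\pi$ and an induction on $\Pi^{2k}$, and that you spell out the standard passage from the unique fixed point of $\Pi^{k_0}$ to that of $\Pi$, which the paper leaves implicit.
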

\begin{proof}[Proof \nopunct.]
\begin{enumerate}
\item As for the first claim, we note that $L^1([r,r+\tau],B[0,M])$ is a complete space with respect to the norm defined in \eqref{eq:def_norm_l1}. Hence it suffices to show that $B_{\infty}[0,M]$ is closed with respect to this norm. So let $(f_n)_{n \in \mathbb{N}}\subset B_{\infty}[0,M]$ be such that $f_n \rightarrow f$ with respect to \eqref{eq:def_norm_l1}. Then we can extract a subsequence $(f_{n_k})_{k \in \mathbb{N}}$ which converges almost everywhere pointwise 
on $[0,T] \times \R^d$,
against $f$. Because $(f_n)$ are uniformly bounded by $M$, so is $f$. Now we only need to recall that the intersection of closed sets (in this case $L^1([r,r+\tau], B[0,M])$ and $B_{\infty}[0,M]$) is closed.\\ 
\item Let $(r, \tau)$, be fixed for the moment.
 Without loss of generality we assume 
$M= 1$.
 Let $v \in L^1([r,r+\tau],B[0,1])\cap B_{\infty}[0,1]$.
 Now, the triangle inequality yields
\begin{align*}
\Vert\Pi(v)\Vert_{1} =  &\int\limits_r^{r+\tau}  \Vert\Pi(v)(t,\cdot)\Vert_{L^1(\mathbb{R}^d)}  \dif t \\
\leq &\int\limits_r^{r+\tau}  \Vert\int\limits_r^t \int\limits_{\mathbb{R}^d} p(s,x_0,t,\cdot) \hat{\Lambda}(s,x_0,v+\widehat{u_0}(\phi)) \dif x_0  \dif s \Vert_{L^1(\mathbb{R}^d)} \dif t \\
&+ \sum\limits_{j=1}^d \int\limits_r^{r+\tau}  \Vert \int\limits_r^t \int\limits_{\mathbb{R}^d} \partial_{x_0,j} p(s,x_0,t,\cdot) \hat{b}_j(s,x_0,v+\widehat{u_0}(\phi)) \dif x_0  \dif s \Vert_{L^1(\mathbb{R}^d)}  \dif t.
\end{align*}
Note that for the first term on the right-hand side of this inequality, we already have an upper bound from equation~(3.28) in the proof of Lemma~3.7  in~\cite{LOR3} which reads
\begin{align} \label{E31}
\int\limits_r^{r+\tau}  \Vert\int\limits_r^t \int\limits_{\mathbb{R}^d} p(s,x_0,t,\cdot) \hat{\Lambda}(s,x_0,v+\widehat{u_0}(\phi)) \dif x_0  \dif s \Vert_{L^1(\mathbb{R}^d)} \dif t \leq 2 M_{\Lambda}\tau^2.
\end{align}
As for the second term, we restrict our discussion to the first summand, $j=1$,  because all summands can be estimated in the same way. We adapt the estimation of the $W^{1,1}$-norm in the proof of \cite[Lemma 3.7]{LOR3} :
\begin{align*}
&\int\limits_{\mathbb{R}^d} 
\big \vert \int\limits_r^t \int\limits_{\mathbb{R}^d} \partial_{x_0,1} p(s,x_0,t,x) \hat{b}_1(s,x_0,v+\widehat{u_0}(\phi)) \dif x_0  \dif s \Big\vert \dif x \\
&\leq M_{b} \int\limits_{\mathbb{R}^d}  \int\limits_r^t \int\limits_{\mathbb{R}^d} \left\vert \partial_{x_0,1} p(s,x_0,t,x)\right\vert \left\vert (v+\widehat{u_0}(\phi))(s,x_0) \right\vert  \dif x_0  \dif s \dif x \\
&\stackrel{\text{Tonelli}}{=} M_{b} \int\limits_r^t \int\limits_{\mathbb{R}^d}   \int\limits_{\mathbb{R}^d} \left\vert \partial_{x_0,1} p(s,x_0,t,x)\right\vert \left\vert (v+\widehat{u_0}(\phi))(s,x_0) \right\vert  \dif x  \dif x_0 \dif s\\
&\stackrel{\text{ineq.}\eqref{eq:1011}}{\leq } M_{b} C_u \int\limits_r^t \frac{1}{\sqrt{t-s}} \int\limits_{\mathbb{R}^d} \left\vert (v+\widehat{u_0}(\phi))(s,x_0) \right\vert   \int\limits_{\mathbb{R}^d} q(s,x_0,t,x) \dif x \dif x_0 \dif s\\ 
&= M_{b}C_u \int\limits_r^t \frac{1}{\sqrt{t-s}} \int\limits_{\mathbb{R}^d} \left\vert (v+\widehat{u_0}(\phi))(s,x_0) \right\vert  \dif x_0 \dif s\\
&= M_{b}C_u  \int\limits_r^t \frac{1}{\sqrt{t-s}} \underbrace{\left\Vert (v+\widehat{u_0}(\phi))(s,\cdot) \right\Vert_{L^1(\mathbb{R}^d)}}_{\leq \left\Vert v(s,\cdot) \right\Vert_{L^1(\mathbb{R}^d)}+ \left\Vert \widehat{u_0}(\phi)(s,\cdot) \right\Vert_{L^1(\mathbb{R}^d)} \leq 1+ \left\Vert \phi \right\Vert_1} \dif s \\
&\leq (1+ \left\Vert \phi \right\Vert_1) M_b C_u \int\limits_r^t \frac{1}{\sqrt{t-s}} \dif s 
\leq  2(1+ \left\Vert \phi \right\Vert_1) M_b C_u  \sqrt{\tau}.
\end{align*}
This and \eqref{E31} give 
\begin{align}
\Vert\Pi(v)\Vert_{1}\leq 2M_\Lambda \tau^2+ 4 d M_b C_u  \sqrt{\tau}= 
2\sqrt{\tau} (M_{\Lambda} \tau^{\frac{3}{2}} +2 d M_b C_u).
\end{align}
For the $L^{\infty}$-norm of $\Pi(v)$, we repeat the same calculation,
which
yields that there also is a constant $\bar{C}:=\bar{C}(C_u,c_u, \Lambda, b, d)>0$ such that  
\begin{align}
\left\Vert \Pi(v)\right\Vert_{\infty}\leq \bar C \sqrt{\tau}.
\end{align}
In particular, $\Vert\Pi(v)\Vert_{1}$ and $\Vert\Pi(v)\Vert_{\infty}$ converge to zero when $\tau$ converges to zero. So we can choose $\tau$ small enough to ensure that $\Vert\Pi(v)\Vert_{1}\leq 1$ and $\Vert\Pi(v)\Vert_{\infty} \leq 1$. This completes the proof of claim 2.
\item
To verify  
the contraction property on $L^1([r,r+\tau],B[0,M])\cap B_{\infty}[0,M]$ for any $r\in [0,T-\tau]$, we fix $v_1,v_2 \in L^1([r,r+\tau],B[0,M]))\cap B_{\infty}[0,M]$ and $t \in [0,T-\tau]$. For readability, we use the shorthand notation 
$  \Lambda^i(s,x_0):=   \Lambda(s,x_0,(\widehat{u_0}(r,\phi)+v_i)(s,x_0))$ and similarly 
$b^i_j(s,x_0):=\hat{b}_j(s,x_0,(\widehat{u_0}(r,\phi)+v_i)(s,x_0))$ for $i=1,2$
and $ 1 \le j \le d$.
  Then, for $t \in [r,r+\tau]$, we compute
\begin{align*}
& \Vert \Pi(v_1)(t,\cdot)-\Pi(v_2)(t,\cdot) \Vert_{L^1(\mathbb{R}^d)}   \\ 
\le &\int\limits_{\mathbb{R}^d} \Big\vert \int\limits_r^t \int\limits_{\mathbb{R}^d}p(s,x_0,t,x)(\Lambda^1(s,x_0)(\widehat{u_0}+v_1)(s,x_0)-\Lambda^2
(s,x_0)(\widehat{u_0}+v_2)(s,x_0)) \dif x_0 \dif s \Big\vert \dif x \\
&+ \sum\limits_{j=1}^d \int\limits_{\mathbb{R}^d} \Big\vert \int\limits_r^t \int\limits_{\mathbb{R}^d} \partial_{x_0,j}p(s,x_0,t,x) (b_j^1(s,x_0)
(\widehat{u_0}+v_1)(s,x_0)-b_j^2(s,x_0)(\widehat{u_0}+v_2)(s,x_0))
 \dif x_0  \dif s \Big\vert \dif x \\
&=: I_1 + I_2.
\end{align*}
For the rest of the proof, we will forget the argument $(s,x_0)$ for simplicity.
Concerning $I_1$, we compute as follows:
\begin{align*}
I_1&=\int\limits_{\mathbb{R}^d} \Big\vert \int\limits_r^t \int\limits_{\mathbb{R}^d}p(s,x_0,t,x)(\Lambda^1(\widehat{u_0}+v_1)-\Lambda^2(\widehat{u_0}+v_2)) \dif x_0 \dif s \Big\vert \dif x \\
&\leq \int\limits_{\mathbb{R}^d} \int\limits_r^t \int\limits_{\mathbb{R}^d}p(s,x_0,t,x)\Big\vert(\Lambda^1(\widehat{u_0}+v_1)-\Lambda^2(\widehat{u_0}+v_2)\Big\vert  \dif x_0 \dif s \dif x \\
&= \int\limits_r^t \int\limits_{\mathbb{R}^d}\left\vert(\Lambda^1(\widehat{u_0}+v_1)-\Lambda^2(\widehat{u_0}+v_2)) +\Lambda^1v_2-\Lambda^1 v_2 \right\vert \dif x_0 \dif s \\
&=\int\limits_r^t \int\limits_{\mathbb{R}^d}\left\vert(\Lambda^1-\Lambda^2)\widehat{u_0} +\Lambda^1 (v_1-v_2)+(\Lambda^1-\Lambda^2) v_2 \right\vert \dif x_0 \dif s \\
&\leq \int\limits_r^t \int\limits_{\mathbb{R}^d} \underbrace{\left\vert\Lambda^1-\Lambda^2\right\vert}_{\leq L_{\Lambda}\left\vert v_1-v_2 \right\vert } \underbrace{\left\vert\widehat{u_0}\right\vert}_{\leq M} +\underbrace{\left\vert\Lambda^1\right\vert}_{\leq M_{\Lambda}} \left\vert v_1-v_2\right\vert+\underbrace{\left\vert\Lambda^1-\Lambda^2\right\vert}_{\leq L_{\Lambda}\left\vert v_1-v_2 \right\vert } \underbrace{\left\vert v_2 \right\vert}_{\leq M} \dif x_0 \dif s \\
&= (2L_{\Lambda}M+M_{\Lambda}) \int\limits_r^t \left\Vert v_1(s,\cdot )-v_2(s,\cdot ) \right\Vert_{L^1(\mathbb{R}^d)} \dif s.
\end{align*}
For $I_2$, we again restrict our discussion to the first summand. We can estimate
\begin{align*}
&\int\limits_{\mathbb{R}^d} \Big\vert \int\limits_r^t \int\limits_{\mathbb{R}^d} \partial_{x_0,1} p(s,x_0,t,x) (b_1^1(\widehat{u_0}+v_1)-b_1^2(\widehat{u_0}+v_2)) \dif x_0  \dif s \Big\vert \dif x \\
& 
\leq \int\limits_r^t \int\limits_{\mathbb{R}^d}  \int\limits_{\mathbb{R}^d}\big\vert b_1^1(\widehat{u_0}+v_1)-b_1^2(\widehat{u_0}+v_2) \big\vert \big\vert  \partial_{x_0,1} p(s,x_0,t,x)\big\vert   \dif x  \dif x_0  \dif s \\
&\stackrel{\eqref{eq:1011}}{\leq} \int\limits_r^t \frac{1}{\sqrt{t-s}} \int\limits_{\mathbb{R}^d} \left\vert b_1^1(\widehat{u_0}+v_1)-b_1^2(\widehat{u_0}+v_2) 
\right\vert  \int\limits_{\mathbb{R}^d} 
C_u q(s,x_0,t,x)
  \dif x  \dif x_0  \dif s \\
&\leq C_u \int\limits_r^t \frac{1}{\sqrt{t-s}} \int\limits_{\mathbb{R}^d} \left\vert b_1^1(\widehat{u_0}+v_1)-b_1^2(\widehat{u_0}+v_2) \right\vert \dif x_0  \dif s.
\end{align*}
By similar estimates as for $I_1$, previous expression can be shown
to be bounded by
$$ C_u (2L_{b}M+M_{b}) \int\limits_r^t \frac{1}{\sqrt{t-s}} \Vert v_1(s,\cdot )-v_2(s,\cdot ) \Vert_{L^1(\mathbb{R}^d)} \dif s.$$
Until now, we have established that there exists a constant $C>0$ 
which can change from line to line such that
\begin{eqnarray}
 \Vert \Pi(v_1)(t,\cdot)-\Pi(v_2)(t,\cdot) \Vert_{L^1(\mathbb{R}^d)}    
&\leq& C \int\limits_r^t \left(1+ \frac{1}{\sqrt{t-s}}\right) \Vert v_1(s,\cdot )-v_2(s,\cdot ) \Vert_{L^1(\mathbb{R}^d)} \dif s \nonumber  \\
 &\leq& C \int\limits_r^t \frac{1}{\sqrt{t-s}} \Vert v_1(s,\cdot )-v_2(s,\cdot ) \Vert_{L^1(\mathbb{R}^d)} \dif s.  
\label{RFTrick}
\end{eqnarray}
We now try to find a power of $\Pi$ which is a contraction. For this, we iterate $\Pi$, aiming to apply
 a J.B. Walsh iteration  trick, see e.g. the proof of Theorem
3.2 in \cite{walsh1986introduction}.
For $t \in [r,r+\tau]$ we have 
\begin{align} \label{RFTrickBis}
&\Vert \Pi^2(v_1)(t,\cdot)-\Pi^2(v_2)(t,\cdot) \Vert_{L^1(\mathbb{R}^d)}    \nonumber \\
 \leq & C^2 \int\limits_r^t \int\limits_r^s \frac{1}{\sqrt{t-s}}\frac{1}{\sqrt{s-\theta}} \Vert v_1(\theta,\cdot )-v_2(\theta,\cdot ) \Vert_{L^1(\mathbb{R}^d)} \dif \theta \dif  s \nonumber   \\
 = & C^2 \int\limits_r^t \int\limits_{\theta}^t  \frac{1}{\sqrt{t-s}}\frac{1}{\sqrt{s-\theta}} \Vert v_1(\theta,\cdot )-v_2(\theta,\cdot ) \Vert_{L^1(\mathbb{R}^d)} \dif  s \dif \theta \nonumber   \\
= & C^2 \int\limits_r^t \Big( \int\limits_{\theta}^t \frac{1}{\sqrt{t-s}}\frac{1}{\sqrt{s-\theta}} \dif  s \Big) \Vert v_1(\theta,\cdot )-v_2(\theta,\cdot ) \Vert_{L^1(\mathbb{R}^d)} \dif \theta \\
= & C^2 \int\limits_r^t \Big( \int\limits_{0}^{t-\theta}  \frac{1}{\sqrt{(t-\theta)-\omega}}\frac{1}{\sqrt{\omega}} \dif  \omega \Big)  \Vert v_1(\theta,\cdot )-v_2(\theta,\cdot ) \Vert_{L^1(\mathbb{R}^d)} \dif \theta  \nonumber  \\
= & \pi C^2 \int\limits_r^t \Vert v_1(\theta,\cdot )-v_2(\theta,\cdot ) \Vert_{L^1(\mathbb{R}^d)} \dif \theta.  \nonumber 
\end{align}
\noindent Indeed the last equality 
follows because
$$ \int\limits_{0}^{t-\theta}  \frac{1}{\sqrt{(t-\theta)-\omega}}\frac{1}{\sqrt{\omega}} \dif  \omega = B\left(\frac{1}{2}, \frac{1}{2}\right) = \pi,$$
where $(x,y) \mapsto B(x,y)$ is the Beta function.

\noindent Actually, this is the induction basis
 of a mathematical induction over $k\in \mathbb{N}$ to establish
\begin{align}
 \Vert \Pi^{2k}(v_1)(t,\cdot)-\Pi^{2k}(v_2)(t,\cdot) 
\Vert_{L^1(\mathbb{R}^d)}   \dif s
\leq  \pi^k C^{2k} \int\limits_r^t \frac{(t-s)^{k-1}}{(k-1)!} \Vert v_1(s,\cdot )-v_2(s,\cdot ) \Vert_{L^1(\mathbb{R}^d)} \dif s. \label{eq:power_of_pi_is_contraction}
\end{align}
So let us do the inductive step. The third line comes from \eqref{RFTrickBis}. 
\begin{align*}
& \Vert \Pi^{2k+2}(v_1)(t,\cdot)-\Pi^{2k+2}(v_2)(t,\cdot) 
\Vert_{L^1(\mathbb{R}^d)}    \\
= & \Vert \Pi^2 \Pi^{2k}(v_1)(t,\cdot)- \Pi^2 \Pi^{2k}(v_2)(t,\cdot) 
\Vert_{L^1(\mathbb{R}^d)}   \\
\leq & \pi C^2 \int\limits_{r}^{t} \Vert \Pi^{2k}(v_1)(\theta,\cdot)-\Pi^{2k}(v_2)(\theta,\cdot) \Vert_{L^1(\mathbb{R}^d)}     \dif \theta \\
\leq &\pi C^2 \int\limits_{r}^{t} \pi^{k} C^{2k} \int\limits_r^{\theta} \frac{(\theta-s)^{k-1}}{(k-1)!} \Vert v_1(s,\cdot )-v_2(s,\cdot ) \Vert_{L^1(\mathbb{R}^d)} \dif s \dif \theta \\
\leq &\pi^{k+1} C^{2(k+1)} \int\limits_r^t \int\limits_s^t \frac{(\theta-s)^{k-1}}{(k-1)!} \Vert v_1(s,\cdot )-v_2(s,\cdot ) \Vert_{L^1(\mathbb{R}^d)} \dif \theta \dif s \\
= & \pi^{k+1} C^{2(k+1)} \int\limits_r^t \frac{1}{(k-1)!} 
\int\limits_s^t (\theta-s)^{k-1}  \dif \theta 
\Vert v_1(s,\cdot )-v_2(s,\cdot ) \Vert_{L^1(\mathbb{R}^d)} \dif s \\
=& \frac{\pi^{k+1} C^{2(k+1)}}{(k-1)!}  \int\limits_r^t  \frac{(t-s)^k}{k}\Vert v_1(s,\cdot )-v_2(s,\cdot ) \Vert_{L^1(\mathbb{R}^d)} \dif s .
\end{align*}
Now that we have established equation \eqref{eq:power_of_pi_is_contraction}, we can use it to show that a power of $\Pi$ is a contraction: 
\begin{equation*} \label{E33bis}
\Vert \Pi^{2k}(v_1)(t,\cdot)-\Pi^{2k}(v_2)(t,\cdot) \Vert_{L^1(\mathbb{R}^d)}  
\leq  \frac{\pi^k T^{k-1} C^{2k}}{(k-1)!} \int\limits_r^t \Vert v_1(s,\cdot )-
v_2(s,\cdot ) \Vert_{L^1(\mathbb{R}^d)} \dif s. 
\end{equation*}
So, integrating from $r$ to $r +\tau$, we get
\begin{equation}
 \Vert \Pi^{2k}(v_1)-\Pi^{2k}(v_2) \Vert_{1}  \le 
 \frac{\pi^k T^{k} C^{2k}}{(k-1)!} \Vert v_1 - v_2 \Vert_{1}.
\end{equation}
Now we can conclude that there exists a $k_0 \in \mathbb{N}$ such that $\Pi^{2k_0}$ is a contraction because the exponential growth of $\pi^k T^k C^{2k}$
 is dominated by the factorial $(k-1)!$
Finally, $\Pi$ admits a unique fixed point as a contraction on a Banach space by Banach's fixed point theorem.
\end{enumerate}
\end{proof}
\noindent Now that we have constructed fixed points for short time intervals, we are concerned with ``gluing'' these fixed points together.
\begin{lem} \label{Lemma:glue} 
We fix some 
bounded Borel measurable functions
$\hat{\Lambda}:[0,T]\times \mathbb{R}^d  \rightarrow \mathbb{R}$ and $\hat{b}:[0,T]\times \mathbb{R}^d  \rightarrow \mathbb{R}^d$.
We suppose \ref{A4}.
Fix $\tau >0$ such that there is an $N \in \mathbb{N}$ s.t. $N \tau=T$. 
Then a measure valued map $\mu:[0,T]\mapsto \mathcal{M}_f(\mathbb{R}^d)$ satisfies 
\begin{align}
\begin{cases}
\mu_t(\dif x)&=\int\limits_{\mathbb{R}^d} p(k \tau , x_0,t, x) \mu_{k \tau}(\dif x_0)\dif x\\
& \quad +\int\limits_{k\tau}^t \int\limits_{\mathbb{R}^d} p(s,x_0,t,x) \hat{\Lambda}(s,x_0) \mu_s(\dif x_0 )  \dif s \dif x  \\
& \hfill \quad   +\sum\limits_{j=1}^d \int\limits_{k\tau}^t \int\limits_{\mathbb{R}^d} \partial_{x_0,j} p(s,x_0,t,x) \hat{b}_j(s,x_0) \mu_s(\dif x_0 )   \dif s \dif x,\\
\mu_0(\cdot)&={\bf u_0},
\end{cases}
\label{eq:measure_mild_sol_glue}
\end{align}
for all $t\in [k\tau,(k+1)\tau]$ and $k \in \{0,\ldots,N-1\}$ if and only if $\mu$ is a measure-mild solution of \eqref{eq:linearized_PDE}, in the sense of Definition 
\ref{def:measure_mild_solution}.
\end{lem}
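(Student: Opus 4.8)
The plan is to prove the two implications by, respectively, concatenating and splitting the equations at the grid points $k\tau$, the passage between a ``local'' transition kernel (based at $k\tau$) and a ``global'' one (based at $0$) being governed by the Chapman--Kolmogorov relation \eqref{MarkovFund}. The only preliminary ingredient beyond \eqref{MarkovFund} itself is its differentiated form
$$\int_{\mathbb{R}^d}\partial_{x_0,j}p(r,x_0,\sigma,z)\,p(\sigma,z,t,x)\,\dif z=\partial_{x_0,j}p(r,x_0,t,x)\qquad (0\le r\le\sigma\le t\le T),$$
which I would obtain by pairing both sides with an arbitrary $\psi\in C_0^{\infty}(\mathbb{R}^d)$ in the variable $x_0$, moving the derivative onto $\psi$ (definition of the distributional derivative), applying Fubini in $(x_0,z)$, using \eqref{MarkovFund}, and then moving the derivative back. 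All the Fubini exchanges appearing below are legitimated by the Gaussian bounds \eqref{eq:1011a}--\eqref{eq:1011}, the boundedness of $\hat b$ and $\hat\Lambda$, and the very integrability that makes \eqref{eq:measure_mild_sol_def} (resp.\ \eqref{eq:measure_mild_sol_glue}) meaningful; the needed measurability of $s\mapsto\mu_s$ is implicit in Definition \ref{def:measure_mild_solution}.

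For the direction ``\eqref{eq:measure_mild_sol_glue} holds for all $k\in\{0,\dots,N-1\}\ \Rightarrow\ \mu$ is a measure-mild solution'', I would induct on the interval index $k$. For $k=0$ the equation \eqref{eq:measure_mild_sol_glue} on $[0,\tau]$ coincides verbatim with \eqref{eq:measure_mild_sol_def}, since $\mu_0={\bf u_0}$. Suppose \eqref{eq:measure_mild_sol_def} is known for every $t\in[0,k\tau]$; at $t=k\tau$ it exhibits $\mu_{k\tau}$ as an absolutely continuous measure with an explicit Lebesgue density. Fixing $t\in[k\tau,(k+1)\tau]$ and substituting that density into the term $\int_{\mathbb{R}^d}p(k\tau,x_0,t,x)\,\mu_{k\tau}(\dif x_0)$ of \eqref{eq:measure_mild_sol_glue}, then applying Fubini and the identities $\int p(0,\cdot,k\tau,z)p(k\tau,z,t,x)\dif z=p(0,\cdot,t,x)$, $\int p(r,\cdot,k\tau,z)p(k\tau,z,t,x)\dif z=p(r,\cdot,t,x)$ and the differentiated one above (all with $r\le k\tau$), I collapse each two-step transition into the corresponding one-step transition from its source time directly to $t$. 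The resulting contribution, carried by $[0,k\tau]$, merges with the $[k\tau,t]$-contribution already present in \eqref{eq:measure_mild_sol_glue}, and the sum is exactly \eqref{eq:measure_mild_sol_def} at time $t$, completing the induction.

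For the converse, no induction is needed since \eqref{eq:measure_mild_sol_def} is assumed at every time. Fix $k$ and $t\in[k\tau,(k+1)\tau]$, split each $\int_0^t(\cdots)\dif r$ in \eqref{eq:measure_mild_sol_def} as $\int_0^{k\tau}+\int_{k\tau}^t$, and in the initial term together with the three resulting $[0,k\tau]$-integrals factor each transition kernel from its source to $t$ through the intermediate time $k\tau$ by means of \eqref{MarkovFund} and the differentiated identity. After Fubini, the bracketed factor multiplying $p(k\tau,z,t,x)$ is precisely the density of $\mu_{k\tau}$ furnished by \eqref{eq:measure_mild_sol_def} at time $k\tau$; hence these four terms add up to $\int_{\mathbb{R}^d}p(k\tau,x_0,t,x)\,\mu_{k\tau}(\dif x_0)\,\dif x$, the first term of \eqref{eq:measure_mild_sol_glue}. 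Since the two remaining $[k\tau,t]$-integrals are verbatim the other two terms of \eqref{eq:measure_mild_sol_glue}, the identity \eqref{eq:measure_mild_sol_glue} follows.

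I expect the only genuine obstacle to be the differentiated Chapman--Kolmogorov identity of the first paragraph --- i.e.\ exchanging the distributional $x_0$-derivative with the $z$-integral in \eqref{MarkovFund} --- which is precisely where the gradient estimate \eqref{eq:1011} of Assumption \ref{A4} is used; once it is available, everything else is bookkeeping: reindexing finite sums of integrals, a fixed number of Fubini applications dominated by the Gaussian bounds, and the trivial base case of the induction.
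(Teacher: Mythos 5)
Your proposal is correct and follows essentially the same route as the paper: induction over the grid intervals in the first direction, and in both directions the collapse of two-step transitions through the grid time via the Chapman--Kolmogorov identity \eqref{MarkovFund} together with its $x_0$-differentiated form, justified exactly as in the paper by pairing with a test function, Fubini/Tonelli and the Gaussian bounds \eqref{eq:1011a}--\eqref{eq:1011}. The only (immaterial) difference is that for the converse you dispense with the induction the paper formally writes down, observing correctly that \eqref{eq:measure_mild_sol_def} at time $k\tau$ already supplies the density of $\mu_{k\tau}$ needed to reassemble \eqref{eq:measure_mild_sol_glue}.
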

\begin{proof}[Proof \nopunct.] 
\begin{enumerate}
\item Assume first that $\mu$ satisfies \eqref{eq:measure_mild_sol_glue} for all $t\in [k\tau,(k+1)\tau]$ and $k \in \{0,\ldots,N-1\}$.
 \\ \\
We will now show that this implies that $\mu$ is a measure-mild solution of \eqref{eq:linearized_PDE} in the sense of \eqref{eq:measure_mild_sol_def}. We will perform a mathematical induction to show that the statement
\begin{align*}
(S_n) \begin{cases}
\mu_t(\dif x)=&\int_{\mathbb{R}^d} p(0,x_0,t,x) {\bf u_0} (\dif x_0)\dif x +\int\limits_{0}^t \int\limits_{\mathbb{R}^d} p(s,x_0,t,x) \hat{\Lambda}(s,x_0) \mu_s(\dif x_0 )  \dif s \dif x \notag \\
&+ \sum\limits_{j=1}^d \int\limits_{0}^t \int\limits_{\mathbb{R}^d} \partial_{x_0,j} p(s,x_0,t, x) \hat{b}_j(s,x_0) \mu_s(\dif x_0 ) \dif s \dif x \qquad \text{ for all } t \in [0,n\tau]
\end{cases}
\end{align*} 
holds for any $n\in \{1,\ldots,N\}$ .
For the induction basis, it suffices to note that if we set $k=0$, equation \eqref{eq:measure_mild_sol_def} coincides with \eqref{eq:measure_mild_sol_glue}. Now suppose that ($S_{n-1}$) holds for some $n \in \{1,\ldots, N\}$. In particular, we have for $t=(n-1)\tau$
\begin{align}
\mu_{(n-1)\tau}(\dif x_0) =&\int_{\mathbb{R}^d} p(0,\widetilde{x_0},(n-1)\tau, x_0) 
{\bf u_0} (\dif \widetilde{x_0}) \dif x_0\notag \\
 &+\int\limits_{0}^{(n-1)\tau} \int\limits_{\mathbb{R}^d} p(s,\widetilde{x_0},(n-1)\tau,x_0) \hat{\Lambda}(s,\widetilde{x_0}) \mu_s(\dif \widetilde{x_0} )  \dif s \dif x_0 \notag \\
&+  \sum\limits_{j=1}^d \int\limits_{0}^{(n-1)\tau} \int\limits_{\mathbb{R}^d} \partial_{\widetilde{x_0},j} p(s,\widetilde{x_0},(n-1)\tau, x) \hat{b}_j(s,\widetilde{x_0}) \mu_s(\dif \widetilde{x_0} ) \dif s \dif x_0. \label{eq:measure_mild_glue_induction_hypothesis}
\end{align}
Recall that we also suppose the validity of equation \eqref{eq:measure_mild_sol_glue}, i.e. for $k=n-1$ and $t\in [(n-1)\tau,n\tau ]$ we have
\begin{align}
\mu_t(\dif x)=&\int\limits_{\mathbb{R}^d} p((n-1) \tau , x_0,t,x) \mu_{(n-1) \tau}(\dif x_0)\dif x\notag \\ &+\int\limits_{(n-1)\tau}^t \int\limits_{\mathbb{R}^d} p(s,x_0,t, x) \hat{\Lambda}(s,x_0) \mu_s(\dif x_0 )\dif s  \dif x   \notag \\
&+ \sum\limits_{j=1}^d \int\limits_{(n-1)\tau}^t \int\limits_{\mathbb{R}^d} \partial_{\widetilde{x_0},j} p(s,\widetilde{x_0},t, x) \hat{b}_j(s,\widetilde{x_0}) \mu_s(\dif \widetilde{x_0} )  \dif s  \dif x. \label{eq:measure_mild_glue_hypothesis_in_proof}
\end{align}
Now, plugging equation \eqref{eq:measure_mild_glue_induction_hypothesis}
into equation \eqref{eq:measure_mild_glue_hypothesis_in_proof}, we find
\begin{align}
&\mu_t(\dif x)\\
= &\int\limits_{\mathbb{R}^d} p((n-1) \tau , x_0,t,x) \Big (\int_{\mathbb{R}^d} p(0,\widetilde{x_0},(n-1)\tau, x_0) {\bf u_0} (\dif \widetilde{x_0}) \dif x_0  \notag\\
& +\int\limits_{0}^{(n-1)\tau} \int\limits_{\mathbb{R}^d} p(s,\widetilde{x_0},(n-1)\tau,x_0) \hat{\Lambda}(s,\widetilde{x_0}) \mu_s(\dif \widetilde{x_0} )  \dif s \dif x_0  \notag \\
&
 + \sum\limits_{j=1}^d \int\limits_{0}^{(n-1)\tau} \int\limits_{\mathbb{R}^d} \partial_{\widetilde{x_0},j} p(s,\widetilde{x_0},(n-1)\tau, x) \hat{b}_j(s,\widetilde{x_0}) \mu_s(\dif \widetilde{x_0} )  \dif s \dif x_0 \Big ) \dif x \notag \\
&+\int\limits_{(n-1)\tau}^t \int\limits_{\mathbb{R}^d} p(s,x_0,t, x) \hat{\Lambda}(s,x_0) \mu_s(\dif x_0) \dif s  \dif x \notag \\
&+  \sum\limits_{j=1}^d \int\limits_{(n-1)\tau}^t \int\limits_{\mathbb{R}^d} \partial_{x_0,j} p(s,x_0,t, x) \hat{b}_j(s,x_0) \mu_s(\dif x_0) \dif s \dif x\notag \\
= &\int\limits_{\mathbb{R}^d} \Big ( \int\limits_{\mathbb{R}^d} p(0,\widetilde{x_0},(n-1)\tau, x_0) p((n-1) \tau, x_0,t,x)\dif x_0 \Big ) {\bf u_0} (\dif \widetilde{x_0}) \dif x \notag\\
&+ \Big [ \int\limits_{0}^{(n-1)\tau}  \int\limits_{\mathbb{R}^d} \Big (\int\limits_{\mathbb{R}^d} p(s,\widetilde{x_0},(n-1)\tau,x_0) p((n-1) \tau , x_0,t,x)  \dif x_0 \Big ) \hat{\Lambda}(s,\widetilde{x_0}) 
\mu_s(\dif \widetilde{x_0}) \dif s \dif x    \notag\\
& + \int\limits_{(n-1)\tau}^t \int\limits_{\mathbb{R}^d} p(s,\widetilde{x_0},t, x) \hat{\Lambda}(s,\widetilde{x_0}) \mu_s(\dif \widetilde{x_0} ) \dif s  \dif x \Big ] \notag\\
&+ \Big [  \int\limits_{0}^{(n-1)\tau} \sum\limits_{j=1}^d \int\limits_{\mathbb{R}^d} \Big (\int\limits_{\mathbb{R}^d} \partial_{\widetilde{x_0},j} p(s,\widetilde{x_0},(n-1)\tau,x_0) p((n-1) \tau , x_0,t,x)  \dif x_0 \Big ) \hat{b}_j(s,\widetilde{x_0}) \mu_s(\dif \widetilde{x_0}) \dif s \dif x \notag \\
&+  \sum\limits_{j=1}^d \int\limits_{(n-1)\tau}^t \int\limits_{\mathbb{R}^d} \partial_{\widetilde{x_0},j} p(s,\widetilde{x_0},t, x) \hat{b}_j(s,\widetilde{x_0}) \mu_s(\dif \widetilde{x_0} ) \dif s  \dif x \Big ].
\label{eq:some_long_terms}
\end{align}
Now, focus on the right-hand side of the equation. For the first two terms, we can now use the Chapman-Kolmogorov identity (see \eqref{MarkovFund})
for $p$ which is included  in Assumption \ref{A4}. That is, for a.e.
$(t,x)$ and $(s,x_0)$, we have
\begin{align}
\int\limits_{\mathbb{R}^d} p(s,\widetilde{x_0},(n-1)\tau, x_0) p((n-1) \tau, x_0,t,x)\dif x_0 =p(s,\widetilde{x_0},t,x) \label{eq:chapman_kolmogorov}.
\end{align}
We differentiate \eqref{eq:chapman_kolmogorov} with
respect to $x_0$, we integrate 
the resulting identity
 against a test function and use Tonelli's theorem to conclude that
\begin{align}
\int\limits_{\mathbb{R}^d} \partial_{\widetilde{x_0},j}p(s,\widetilde{x_0},(n-1)\tau, x_0) p((n-1) \tau, x_0,t,x)\dif x_0 =\partial_{\widetilde{x_0},j} p(s,\widetilde{x_0},t,x) \label{eq:chapman_kolmogorov_diff}, \ (t,x),(s,x_0),   \ {\rm a.e.}
\end{align}
 for any $j \in \{1,\ldots,d\}$.
Applying \eqref{eq:chapman_kolmogorov} and \eqref{eq:chapman_kolmogorov_diff} to the right-hand side of \eqref{eq:some_long_terms} finally yields that $\mu$ is a measure-mild solution of \eqref{eq:linearized_PDE} in the sense of
 Definition 
\ref{def:measure_mild_solution}.
i.e. establishes $S_n$.
\item Now suppose that $\mu$ is a measure-mild solution of \eqref{eq:linearized_PDE} in the sense of Definition \ref{def:measure_mild_solution}.
\\ \\ 
We use a mathematical induction over $k \in \{0,1,\ldots,N-1\}$ to show that \eqref{eq:measure_mild_sol_glue} holds. Again, the induction basis is obvious because for $k=0$, \eqref{eq:measure_mild_sol_glue} and \eqref{eq:measure_mild_sol_def} co\"{\i}ncide. Now suppose that \eqref{eq:measure_mild_sol_glue} holds for all $k\in \{0,1,\ldots,n-1\}$. 
Recall the induction hypothesis for $k=(n-1)$ and $t =n\tau$:
\begin{align*}
\mu_{n\tau}(\dif x)&=\int\limits_{\mathbb{R}^d} p((n-1) \tau , x_0,n\tau, x) 
\mu_{(n-1) \tau}(\dif x_0)\dif x\\
& \quad +\int\limits_{(n-1)\tau}^{n\tau} \int\limits_{\mathbb{R}^d} p(s,x_0,n\tau ,x) \hat{\Lambda}(s,x_0) \mu_s(\dif x_0 ) \dif s \dif x  \\
& \hfill \quad   +  \sum\limits_{j=1}^d \int\limits_{(n-1)\tau}^{n\tau} \int\limits_{\mathbb{R}^d} \partial_{x_0} p(s,x_0,n\tau ,x) \hat{b}_j(s,x_0) \mu_s(\dif x_0 ) \dif s \dif x.
\end{align*}
We begin by exploiting the definition of a measure-mild solution of
 \eqref{eq:linearized_PDE} in the sense of Definition  \ref{def:measure_mild_solution}:
\begin{align*}
&\int\limits_{\mathbb{R}^d} p(n \tau , x_0,t, x) \mu_{n \tau}  (\dif x_0)\dif x
 +\int\limits_{n\tau}^t \int\limits_{\mathbb{R}^d} p(s,x_0,t,x) \hat{\Lambda}(s,x_0) \mu_s(\dif x_0 )  \dif s \dif x  \\
& \hfill \quad   +\sum\limits_{j=1}^d \int\limits_{n\tau}^t \int\limits_{\mathbb{R}^d} \partial_{x_0,j} p(s,x_0,t,x) \hat{b}_j(s,x_0) \mu_s(\dif x_0 )   \dif s \dif x\\
=&\int\limits_{\mathbb{R}^d} p(n \tau , x_0,t, x) \Big ( \int\limits_{\mathbb{R}^d} p(0, \widetilde{x_0},n\tau, x_0) \mu_0(\dif \widetilde{x_0})\dif x_0 \\
& \quad  +\int\limits_{0}^{n\tau} \int\limits_{\mathbb{R}^d} p(s,\widetilde{x_0},n\tau ,x_0) \hat{\Lambda}(s,\widetilde{x_0}) \mu_s(\dif \widetilde{x_0} ) \dif s \dif x_0  \\
& \hfill \quad    +  \sum\limits_{j=1}^d \int\limits_{0}^{n\tau} \int\limits_{\mathbb{R}^d} \partial_{\widetilde{x_0},j} p(s,\widetilde{x_0},n\tau ,x_0) \hat{b}_j(s,\widetilde{x_0}) \mu_s(\dif \widetilde{x_0} ) \dif s \dif x_0
 \Big ) \\
& \quad +\int\limits_{n\tau}^t \int\limits_{\mathbb{R}^d} p(s,x_0,t,x) \hat{\Lambda}(s,x_0) \mu_s(\dif x_0 )  \dif s \dif x  \\
& \hfill \quad   +\sum\limits_{j=1}^d \int\limits_{n\tau}^t \int\limits_{\mathbb{R}^d} \partial_{x_0,j} p(s,x_0,t,x) \hat{b}_j(s,x_0) \mu_s(\dif x_0 )   \dif s \dif x \\
=&\int\limits_{\mathbb{R}^d} \Big ( \int\limits_{\mathbb{R}^d} p(0 , \widetilde{x_0},n\tau, x_0) p(n \tau , x_0,t, x) \dif x_0 \Big ) \mu_0(\dif \widetilde{x_0})  \\
& \quad +\int\limits_{0}^{n\tau} \int\limits_{\mathbb{R}^d} p(s,\widetilde{x_0},n\tau ,x_0) \hat{\Lambda}(s,\widetilde{x_0}) \mu_s(\dif \widetilde{x_0} ) \dif s \dif x_0  \\
& \hfill \quad  +  \sum\limits_{j=1}^d \int\limits_{0}^{n\tau} \int\limits_{\mathbb{R}^d} \partial_{\widetilde{x_0},j} p(s,\widetilde{x_0},n\tau ,x_0) \hat{b}_j(s,\widetilde{x_0}) \mu_s(\dif \widetilde{x_0} ) \dif s \dif x_0 \\
& \quad +\int\limits_{n\tau}^t \int\limits_{\mathbb{R}^d} p(s,x_0,t,x) \hat{\Lambda}(s,x_0) \mu_s(\dif x_0 )  \dif s \dif x  \\
& \hfill \quad   + \sum\limits_{j=1}^d \int\limits_{n\tau}^t \int\limits_{\mathbb{R}^d} \partial_{x_0,j} p(s,x_0,t,x) \hat{b}_j(s,x_0) \mu_s(\dif x_0 )   \dif s \dif x \\
=& \int_{\mathbb{R}^d}p(0,\widetilde{x_0},t, x)\mu_0(\dif \widetilde{x_0}) +\int\limits_{0}^t \int\limits_{\mathbb{R}^d} p(s,x_0,t,x) \hat{\Lambda}(s,x_0) 
\mu_s(\dif x_0 )  \dif s \dif x  \\
& \hfill \quad   +\sum\limits_{j=1}^d \int\limits_{0}^t \int\limits_{\mathbb{R}^d} \partial_{x_0,j} p(s,x_0,t,x) \hat{b}_j(s,x_0) \mu_s(\dif x_0 )   \dif s \dif x \overbrace{=}^{\eqref{eq:measure_mild_sol_def}} \mu_t(\dif x ).
\end{align*}
Thus the claim of the inductive step is established and the proof accomplished.
\end{enumerate} \end{proof}

\noindent With these results, we can now formulate the central existence and uniqueness theorem. The idea will be to find local "mild solutions" by Lemma \ref{lem:fixed_point_prep} and then to "glue them together" by means of Lemma \ref{Lemma:glue}.
\begin{thm} \label{thm:e_and_u}
Let Assumptions 
\ref{A1}, \ref{A4},
 \ref{A6} and \ref{A7}
 be in force.
\begin{enumerate}
\item Under Assumption \ref{A5}, there exists a  unique
bounded mild solution of \eqref{eq:PDE_original}.
\item Under Assumption \ref{C5}, there is at most one bounded mild solution of \eqref{eq:PDE_original}.
\end{enumerate} 
 \end{thm}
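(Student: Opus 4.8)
The plan is to obtain the mild solution of \eqref{eq:PDE_original} by concatenating the short-time fixed points produced by Lemma \ref{lem:fixed_point_prep} and then invoking Lemma \ref{Lemma:glue} to certify that the concatenation is a genuine global mild solution. Fix for the moment a radius $M$ and a step $\tau$ with $T/\tau\in\N$. On $[0,\tau]$ I would set $\phi=u_0$, take the unique fixed point $v$ of $\Pi(0,\cdot)$ in $L^1([0,\tau],B[0,M])\cap B_\infty[0,M]$ granted by Lemma \ref{lem:fixed_point_prep}, and put $u:=\widehat{u_0}(0,u_0)+v$ on $[0,\tau]$; the identity $v=\Pi(0,v)$ is exactly the mild equation for $u$ on $[0,\tau]$ with datum $u_0$. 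Then I iterate: having constructed $u$ on $[0,k\tau]$, I set $\phi_k:=u(k\tau,\cdot)$, apply Lemma \ref{lem:fixed_point_prep} at starting time $k\tau$ with datum $\phi_k$, and extend $u$ to $[k\tau,(k+1)\tau]$ via $u:=\widehat{u_0}(k\tau,\phi_k)+v_k$. After $N=T/\tau$ steps this yields $u$ on $[0,T]$ which, on every slab $[k\tau,(k+1)\tau]$, satisfies the glued identity \eqref{eq:measure_mild_sol_glue} with $\hat b(s,x)=b(s,x,u(s,x))$ and $\hat\Lambda(s,x)=\Lambda(s,x,u(s,x))$, both bounded by \ref{A5}. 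Lemma \ref{Lemma:glue}, applied to the density-carrying measure $\mu_t(\dif x)=u(t,x)\dif x$, then promotes this to the assertion that $u$ is a mild solution of \eqref{eq:PDE_original} on $[0,T]$.

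The delicate point, and the main obstacle, is to fix $M$ and $\tau$ so that the iteration is consistent: the restart data $\phi_k$ must remain in $L^1\cap L^\infty$ with norms bounded \emph{uniformly in} $k$, and the \emph{same} $\tau$ must be admissible at every step. A crude bound of $\Vert\phi_{k+1}\Vert$ by $\Vert\phi_k\Vert$ loses a factor per step, so the admissible $\tau$ would have to shrink as $N=T/\tau$ grows, which is circular. The remedy is an a priori singular (fractional) Gr\"onwall estimate read directly off the mild formula: using $\int p(s,x_0,t,\cdot)\dif x=1$ from \eqref{Intequal1} and $\int\vert\partial_{x_0,j}p(s,x_0,t,\cdot)\vert\dif x\le C_u(t-s)^{-1/2}$ from \eqref{eq:1011}, one gets for $t\in[0,k\tau]$ that $\Vert u(t,\cdot)\Vert_{L^1}\le 1+M_\Lambda\int_0^t\Vert u(s,\cdot)\Vert_{L^1}\dif s+dM_bC_u\int_0^t(t-s)^{-1/2}\Vert u(s,\cdot)\Vert_{L^1}\dif s$, and analogously for $\Vert u(t,\cdot)\Vert_\infty$ starting from $\Vert\widehat{u_0}(0,u_0)\Vert_\infty\le C_u\Vert u_0\Vert_\infty$. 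A fractional Gr\"onwall lemma then bounds $\sup_{t\le T}\Vert u(t,\cdot)\Vert_{L^1}$ and $\sup_{t\le T}\Vert u(t,\cdot)\Vert_\infty$ by constants $K_1,K_\infty$ depending only on $T$, $\Vert u_0\Vert_\infty$ and $M_b,M_\Lambda,C_u,d$ — crucially not on $N$. With $K_1,K_\infty$ in hand I would take $M:=2(K_1+K_\infty+C_u\Vert u_0\Vert_\infty)$ and then $\tau$ below the threshold that items 2--3 of Lemma \ref{lem:fixed_point_prep} attach to this $M$ (with $T/\tau\in\N$), and run the induction on $k$: at each step Lemma \ref{Lemma:glue} (applied with final time $(k+1)\tau$) already tells us $u$ is a mild solution on $[0,(k+1)\tau]$, so the Gr\"onwall bound holds there and keeps $\phi_{k+1}$ inside $B[0,M]\cap B_\infty[0,M]$. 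The same bounds $K_1,K_\infty$ give that $u$ is bounded and lies in $L^1([0,T]\times\R^d)$, completing item 1.

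For uniqueness in item 1, let $u_1,u_2$ be two bounded mild solutions; the fractional Gr\"onwall estimate above, applied on $[0,T]$ to each, gives $\Vert u_i\Vert_{L^\infty([0,T]\times\R^d)}\le K_\infty$ and $\sup_t\Vert u_i(t,\cdot)\Vert_{L^1}\le K_1$, bounds depending only on the data. Hence both $u_i-\widehat{u_0}(0,u_0)$ lie in $L^1([0,\tau],B[0,M])\cap B_\infty[0,M]$ and are fixed points there of $\Pi(0,\cdot)$, so the contraction of a power of $\Pi$ (item 3 of Lemma \ref{lem:fixed_point_prep}) forces $u_1=u_2$ on $[0,\tau]$; as then $u_1(\tau,\cdot)=u_2(\tau,\cdot)$, repeating on $[\tau,2\tau],\ldots$ yields $u_1=u_2$ on $[0,T]$. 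For item 2, \ref{A5} is no longer available, but the boundedness of $(t,x)\mapsto b(t,x,0)$ and $(t,x)\mapsto\Lambda(t,x,0)$ in \ref{A6}, combined with the Lipschitz bounds, gives $\vert b(t,x,z)\vert\le\vert b(t,x,0)\vert+L_b\vert z\vert$ and $\vert\Lambda(t,x,z)\vert\le\vert\Lambda(t,x,0)\vert+L_\Lambda\vert z\vert$; hence on the range $\vert z\vert\le R:=\max_i\Vert u_i\Vert_\infty+C_u\Vert u_0\Vert_\infty$ the coefficients are bounded by explicit $R$-dependent constants. Using these as surrogates for $M_b,M_\Lambda$, the self-map and contraction estimates of Lemma \ref{lem:fixed_point_prep} go through unchanged, and the same local-uniqueness-then-glue argument produces $u_1=u_2$, which is item 2.
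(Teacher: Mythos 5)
Your existence argument is essentially the paper's: local fixed points of $\Pi$ from Lemma \ref{lem:fixed_point_prep} on slabs of length $\tau$, glued into a global mild solution by Lemma \ref{Lemma:glue} applied to $\mu_t(\dif x)=u(t,x)\dif x$ with $\hat b(s,x)=b(s,x,u(s,x))$, $\hat\Lambda(s,x)=\Lambda(s,x,u(s,x))$. Where you diverge is in the bookkeeping and in uniqueness. The ``circularity'' you worry about is resolved in the paper without any a priori estimate: under \ref{A5} the self-map bounds in Lemma \ref{lem:fixed_point_prep} use only $|b|\le M_b$, $|\Lambda|\le M_\Lambda$ and $\Vert\phi\Vert_{L^1},\Vert\phi\Vert_{\infty}\le M$, so both the bound on $\Pi(v)$ and the target scale linearly in $M$ and the admissible $\tau$ depends only on $M_b,M_\Lambda,C_u,d$; one keeps the same $\tau$ and simply enlarges $M$ at each of the finitely many restarts (the restart data grow at most by a factor of order $1+C_u$ per step). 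Your remedy -- a singular Gr\"onwall a priori bound giving $K_1,K_\infty$ uniform in $k$ -- is a legitimate and arguably more robust alternative (the needed singular Gr\"onwall lemma follows from the same Beta-function iteration used in the paper), but note that your specific choice $M=2(K_1+K_\infty+C_u\Vert u_0\Vert_\infty)$ need not dominate the $(1+C_u)K_\infty$-size differences on later slabs; just take $M$ larger, which costs nothing since $\tau$ does not shrink with $M$. For uniqueness the paper takes a shorter route: it estimates the difference of two bounded mild solutions directly on $[0,T]$, iterating the kernel bound \eqref{eq:1011} to get $\int_0^T\Vert u_1(t,\cdot)-u_2(t,\cdot)\Vert_{L^1}\dif t\le \frac{C^\ell}{\ell!}\int_0^T\Vert u_1(t,\cdot)-u_2(t,\cdot)\Vert_{L^1}\dif t$, where $C$ depends on the sup norms of $u_1,u_2$; this covers items 1 and 2 simultaneously, because under \ref{A6} boundedness of $u_i$ makes $b(\cdot,\cdot,u_i)$ and $\Lambda(\cdot,\cdot,u_i)$ bounded -- exactly the observation behind your ``surrogate'' constants. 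Your slab-by-slab uniqueness (both $u_i-\widehat{u_0}$ are fixed points of the same $\Pi$ in the same ball, then propagate equality of the restart data) is also correct, but it needs the converse direction of Lemma \ref{Lemma:glue} to know that a global mild solution restarts correctly at each $k\tau$; the paper's global iterated estimate avoids both the gluing and the a priori bounds at that stage.
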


\begin{proof}[Proof. \nopunct]
\begin{description}
\item {(a) Existence.}  
We assume Assumption \ref{A5}. To begin constructing the local "mild solutions", we set $r=0$, $\tau>0$
such that $T = N \tau$.
 We also set
 $\phi=u_0$ in \eqref{eq:def_u_zero_hat},  
where ${\bf u_0}(\dif x_0) =   u_0(x_0) \dif x_0$.
 Recall that we have assumed $u_0$ to be a bounded density, so both its $L^{\infty}$-norm and its $L^1$-norm can bounded by a constant $M\geq 1$. First, we define for $(t,x)\in [0,\tau]\times \mathbb{R}^d$
\begin{align}
\widehat{u_0}^0(t,x):= \hat u_0(0, u_0)(t,x) =
\int\limits_{\mathbb{R}^d}p(0,x_0,t,x) u_0(x_0)\dif x_0. \label{eq:u_0^0hat}
\end{align}
Now, we can apply Lemma \ref{lem:fixed_point_prep} to infer the existence of a fixed-point of the mapping 
$\Pi: L^1([0,\tau],L^1(\mathbb{R}^d)) \rightarrow  L^1([0,\tau],L^1(\mathbb{R}^d))$, defined by
\begin{align}
\Pi(r,v)(t,x):=&\int\limits_r^t \int\limits_{\mathbb{R}^d} p(s,x_0,t,x) 
(v+\widehat{u_0}(\phi))(s,x_0)
\Lambda(s,x_0,(v+\widehat{u_0})(\phi)(s,x_0)) 
\dif x_0  \dif s \notag \\
&+\sum\limits_{j=1}^d  \int\limits_r^t \int\limits_{\mathbb{R}^d} \partial_{x_0,j} p(s,x_0,t,x)
(v+\widehat{u_0}(\phi))(s,x_0)
 b_j(s,x_0,(v+\widehat{u_0}(\phi)(s,x_0))) \dif x_0  \dif s. \label{eq:def_pi_Second}
\end{align}
We will refer to this fixed-point as $v^0$. Note that $v^0 \in L^1([0,\tau],B(0,M))\cap B_{\infty}(0,M)$. Then, we set $u^0(t,x)=\widehat{u_0}(t,x)+v^0(t,x)$ for 
$(t,x)\in [0,\tau]\times \mathbb{R}^d$. Note that $u^0$ satisfies equation \eqref{eq:mild_sol_def} from the definition of mild solution restricted on $t\in [0,\tau]$. \\ \\
To extend the construction of $u^0$ for values of $t$ above $\tau$, we proceed by induction. Fix some 
$k \in \{1,\ldots,N-1\}$ and suppose we are given a family of functions $u^1,u^2,\ldots,u^{k-1}$ such that for any $\ell\in \{1,\ldots,k-1\}$ it holds that $u^\ell \in L^1([\ell \tau, (\ell + 1)\tau],L^1 (\mathbb{R}^d)) \cap L^{\infty}([\ell\tau, (\ell + 1)\tau] \times \mathbb{R}^d,\mathbb{R})$ and $u^\ell$ satisfies 
\begin{align} \label{eq:local_mild_solutions} 
u^\ell(t,x)=&\int\limits_{\mathbb{R}^d} p(\ell \tau, x_0,t, x) u^{\ell-1}  (\ell \tau , x_0) \dif x  \nonumber \\
& +\int\limits_{\ell\tau}^t \int\limits_{\mathbb{R}^d} p(s,x_0,t,x) 
\Lambda(s,x_0,u^\ell(s,x_0))
u^\ell (s,x_0) 
 \dif x_0 \dif s \dif x  \\
& + \sum\limits_{j=1}^d \int\limits_{\ell\tau}^t \int\limits_{\mathbb{R}^d} \partial_{x_0,j}
 p(s,x_0,t,x) 
b_j (s,x_0,u^\ell(s,x_0))
u^\ell (s,x_0)  \dif x_0 \dif s \dif x, \nonumber
\end{align}
for all $(t,x)\in [\ell\tau, (\ell + 1)\tau]\times \mathbb{R}^d$. 
In order to define $u^k$, we begin by defining $\widehat{u_0}^k $
\begin{align*}
\widehat{u_0}^k (t,x):= \hat u_0(k \tau, u^{k-1}(k \tau,\cdot))(t,x)
=\int\limits_{\mathbb{R}^d} p(k\tau,x_0,t,x)u^{k-1}(k\tau,x_0) \dif x_0,
\end{align*}
for $(t,x)\in [k\tau,(k+1)\tau]$. Again, we can choose $M$ large enough so as to satisfy $M \geq \max\{ \left\Vert u_{k-1} (k\tau, \cdot)\right\Vert_{\infty},\left\Vert u_{k-1}(k\tau, \cdot)\right\Vert_1\}$. Then set $r = k\tau$ and 
$\phi= u^{k-1}(k\tau,\cdot)$ which allows us to apply Lemma \ref{lem:fixed_point_prep} thereby establishing existence and 
uniqueness of a function $v^k : [k\tau, (k+1)\tau]\times \mathbb{R}^d \rightarrow \mathbb{R}^d$ that
belongs to $L^1([k\tau,(k + 1)\tau],B(0,M)) \cap  B_{\infty}(0,M)$ and satisfying
\begin{align*}
v^k(t,x)= & 
\int\limits_{k\tau}^t \int\limits_{\mathbb{R}^d} p(s,x_0,t,x) 
\Lambda(s,x_0,u^k(s,x_0))
u^k (s,x_0 )
  \dif x_0 \dif s \dif x  \\
& +\sum\limits_{j=1}^d \int\limits_{k\tau}^t \int\limits_{\mathbb{R}^d} \partial_{x_0,j} p(s,x_0,t,x) b_j(s,x_0,u^k(s,x_0)) u^k (s,x_0 )  \dif x_0 \dif s \dif x,
\end{align*}
for all $(t,x)\in [k\tau, (k + 1)\tau]\times \mathbb{R}^d$. Again, we define $u^k(t,x):=\widehat{u_0^k}(t,x)+v^k(t,x)$ for $(t,x)\in [k\tau,(k+1)\tau] \times \R^d$. In particular, for $(t,x)\in [k\tau,(k+1)\tau] \times \R^d$ we have
\begin{align*}
u^k(t,x)= &\int\limits_{\mathbb{R}^d} p(k\tau,x_0,t,x)u^{k-1}(k\tau,x_0)\dif x \\
& +\int\limits_{k\tau}^t \int\limits_{\mathbb{R}^d} p(s,x_0,t,x) 
\Lambda(s,x_0,u^k (s,x_0 ))
 u^k (s,x_0 )  \dif x_0 \dif s \dif x  \\
& +\sum\limits_{j=1}^d \int\limits_{k\tau}^t \int\limits_{\mathbb{R}^d} \partial_{x_0,j} p(s,x_0,t,x)
 b_j(s,x_0, u^k (s,x_0 ) )
u^k (s,x_0 )  \dif x_0 \dif s \dif x.
\end{align*}
This shows that we can indeed use induction to define a sequence $u^1,u^2,\ldots , u^N$ of  functions such that for any $\ell \in \{1,...,N-1\}$ we have $u^\ell \in L^1([\ell\tau, (\ell + 1)\tau],L^1 (\mathbb{R}^d)) \cap L^{\infty}([\ell\tau, (\ell + 1)\tau] \times \mathbb{R}^d,\mathbb{R})$ which satisfies \eqref{eq:local_mild_solutions} for any
 $(t,x)\in [\ell\tau,(\ell+1)\tau] \times \R^d$. \\ \\
Thus we can define a function $u$ on $[0,T]\times \mathbb{R}^d$ by setting $u(t,x)=u^\ell(t,x)$ if $t\in [\ell\tau,(\ell+1)\tau]$. Invoking Lemma \ref{Lemma:glue} with
$$\hat \Lambda(s,x) = \Lambda(s,x,u(s,x)), \ \hat b(s,x) = b(s,x,u(s,x)), 
\
\mu_t(\dif x):=u(t,x)\dif x,$$
 shows that $\mu$ is a measure-mild solution of \eqref{eq:linearized_PDE}. Recalling Remark \ref{rem:measure_mild_def_test} we see that $u$ is a mild solution of \eqref{eq:PDE_original}.  
\item {(b) Uniqueness.}
Here we only suppose the validity of Assumption \ref{C5}.
To establish uniqueness of a bounded mild solution, suppose that $u_1$ and $u_2$ are two bounded mild solutions of \eqref{eq:PDE_original} in the sense
of Definition \ref{def_mild_sol}. 
Then with similar estimations as 
for proving \eqref{E33bis},
we find that there exists a constant $C$, depending only on $u_1,u_2,\Lambda,\Phi,b$ and $b_0$ such that for all $\ell\in \{0, \ldots, N-1\}$, we have 
\begin{align*}
\int\limits_{0}^T \left\Vert u_1(t,\cdot)-u_2(t,\cdot) \right\Vert_{L^1} \dif t \leq \frac{C^\ell}{\ell!} \int\limits_{0}^T \left\Vert u_1(t,\cdot)-u_2(t,\cdot) \right\Vert_{L^1} \dif t,
\end{align*}
from which we can conclude that $ \int\limits_{0}^T \left\Vert u_1(t,\cdot)-u_2(t,\cdot) \right\Vert_{L^1} \dif t=0$. 
\end{description}
\end{proof}

\section{Uniqueness 
for the linear PDE}\label{subsec_equiv_3}

We fix the same framework as the one of Section \ref{SMeasureMild}.

\subsection{Uniqueness of measure-mild solutions}

\begin{prop} \label{prop:uniqueness_measure_mild}
Assume \ref{A4} and that $\hat{b} :[0,T]\times \R^d \rightarrow \R^d $ and $\hat{\Lambda}:[0,T]\times \R^d \rightarrow \R$ are measurable and bounded.
Then there is at most one measure-mild solution of \eqref{eq:linearized_PDE}. 
\end{prop}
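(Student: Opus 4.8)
The plan is to control the total variation norm of the difference of two measure-mild solutions by a Gronwall-type inequality with an integrable singular kernel, and to close it with the same Walsh-type iteration already used in the proof of Lemma~\ref{lem:fixed_point_prep}. So I would start from two measure-mild solutions $\mu^1,\mu^2$ of \eqref{eq:linearized_PDE} and set $\nu_t:=\mu^1_t-\mu^2_t$, a finite signed measure on $\mathbb{R}^d$, for each $t\in[0,T]$. Subtracting the two instances of \eqref{eq:measure_mild_sol_def}, the initial-datum terms cancel and, by linearity of the right-hand side in the measure, $\nu$ satisfies for every $t\in[0,T]$
\[
\nu_t(\dif x)=\int_0^t\!\!\int_{\mathbb{R}^d}\hat{\Lambda}(r,x_0)\,p(r,x_0,t,x)\,\dif x\,\nu_r(\dif x_0)\,\dif r+\sum_{j=1}^d\int_0^t\!\!\int_{\mathbb{R}^d}\hat{b}_j(r,x_0)\,\partial_{x_0,j}p(r,x_0,t,x)\,\dif x\,\nu_r(\dif x_0)\,\dif r,
\]
with $\nu_0={\bf u_0}-{\bf u_0}=0$. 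For $t>0$ the right-hand side is an absolutely continuous measure with a density in $L^1(\mathbb{R}^d)$ that is jointly measurable in $(t,x)$, so $g(t):=\|\nu_t\|_{TV}$ is a measurable function on $[0,T]$ with $g(0)=0$.

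Next I would derive the integral inequality for $g$. Taking absolute values in the display above, integrating first in $x$ by Tonelli, and using the two consequences of Assumption~\ref{A4} --- namely $\int_{\mathbb{R}^d}p(r,x_0,t,x)\,\dif x=1$ from \eqref{Intequal1}, and $\int_{\mathbb{R}^d}|\partial_{x_0,j}p(r,x_0,t,x)|\,\dif x\le C_u/\sqrt{t-r}$ from \eqref{eq:1011} (since $q(r,x_0,t,\cdot)$ is a probability density) --- together with the boundedness of $\hat{\Lambda}$ and $\hat{b}$ by constants $M_{\hat{\Lambda}},M_{\hat{b}}$, I obtain
\[
g(t)\le M_{\hat{\Lambda}}\int_0^t g(r)\,\dif r+d\,M_{\hat{b}}\,C_u\int_0^t\frac{g(r)}{\sqrt{t-r}}\,\dif r\le C\int_0^t\frac{g(r)}{\sqrt{t-r}}\,\dif r,\qquad C:=\sqrt{T}\,M_{\hat{\Lambda}}+d\,M_{\hat{b}}\,C_u,
\]
where I used $1\le\sqrt{T}/\sqrt{t-r}$. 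I would also record the a priori bound $\int_0^T g(r)\,\dif r<\infty$: it follows because the integrals defining a measure-mild solution converge absolutely, which at $t=T$ means $\int_0^T\|\mu^i_r\|_{TV}/\sqrt{T-r}\,\dif r<\infty$ for $i=1,2$, hence $\int_0^T g(r)\,\dif r\le\sqrt{T}\sum_{i=1,2}\int_0^T\|\mu^i_r\|_{TV}/\sqrt{T-r}\,\dif r<\infty$.

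Finally, I would iterate the inequality. Plugging it into itself once and using $\int_s^t\frac{\dif r}{\sqrt{t-r}\sqrt{r-s}}=B(\tfrac12,\tfrac12)=\pi$ gives $g(t)\le\pi C^2\int_0^t g(s)\,\dif s$; iterating this ordinary Gronwall inequality $n$ times yields, exactly as in \eqref{eq:power_of_pi_is_contraction}, $g(t)\le\frac{(\pi C^2)^n T^{n-1}}{(n-1)!}\int_0^T g(s)\,\dif s$ for every $n\in\mathbb{N}$. Since $\int_0^T g<\infty$ and the prefactor tends to $0$, we conclude $g\equiv0$, i.e.\ $\nu_t=0$ for all $t\in[0,T]$, which is the claimed uniqueness $\mu^1=\mu^2$. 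The one genuinely delicate point is the singular kernel $1/\sqrt{t-r}$: it prevents a direct appeal to Gronwall's lemma, and it is also why the concluding limit requires the a priori integrability of $t\mapsto\|\nu_t\|_{TV}$ rather than mere pointwise finiteness --- the two-step reduction to a non-singular kernel (the Walsh trick) is exactly what disposes of both issues.
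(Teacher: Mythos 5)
Your proposal is correct and follows essentially the same route as the paper: subtract the two mild formulations, bound $\|\nu_t\|_{TV}$ via \eqref{Intequal1} and \eqref{eq:1011} to get the singular-kernel inequality, and remove the $1/\sqrt{t-r}$ singularity by one self-composition using $B(\tfrac12,\tfrac12)=\pi$. The only (harmless) difference is at the end: the paper applies Gronwall's lemma directly to the non-singular inequality, while you iterate to factorial decay and use the a priori integrability of $t\mapsto\|\nu_t\|_{TV}$, a slightly more explicit but equivalent conclusion.
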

\begin{proof}[Proof. \nopunct]
Suppose that $\mu_1$ and $\mu_2$ are measure-mild solutions of \eqref{eq:linearized_PDE}. Then we define $\nu:=\mu_1-\mu_2$. Note that 
the definition of measure-mild solutions implies that $\nu$ satisfies
\begin{align*}
\nu_t(\dif x)= &\int\limits_0^t \int\limits_{\mathbb{R}^d}\hat{\Lambda}(r,x_0) p(r,x_0,t,x)\dif x 
~\nu_r(\dif x_0) \dif r \\
&+ \sum\limits_{j=1}^d \int\limits_0^t \int\limits_{\mathbb{R}^d} \hat{b}_j(r,x_0) \partial_{x_0,j} p(r,x_0,t, x)\dif x ~ \nu_r(\dif x_0) \dif r. 
\end{align*}
We prepare the application of  Gronwall's Lemma estimating  the total variation norm of $\nu_t(\cdot)$.
\\ \\
\textbf{Step 1. Estimation of the total variation norm of} $\mathbf{\nu}$ \\ \\ 
Now fix an arbitrary $t\in [0,T]$. Then we have 
\begin{align*}
\Vert \nu_t(\cdot) \Vert_{TV}  =&  
\sup\limits_{\substack{\Psi \in C_b(\mathbb{R}^d)\\ \Vert \Psi \Vert_{\infty} \leq 1 }} 
\Big \vert \int\limits_0^t \int\limits_{\mathbb{R}^d}\hat{\Lambda}(r,x_0) 
\Big( \int\limits_{\mathbb{R}^d} \Psi(x) p(r,x_0,t,x)\dif x\Big)  \nu_r(\dif x_0) \dif r   \\
&+    \sum\limits_{j=1}^d \int\limits_0^t \int\limits_{\mathbb{R}^d} \hat{b}_j(r,x_0) \Big( \int\limits_{\mathbb{R}^d} \Psi(x) \partial_{x_0,j} p(r,x_0,t,x) \dif x \Big) \nu_r(\dif x_0) \dif r  \Big \vert \\
\leq &\sup\limits_{\substack{\Psi \in C_b(\mathbb{R}^d)\\ \left\Vert \Psi \right\Vert_{\infty} \leq 1 }} \Big\vert \int\limits_{\mathbb{R}^d} \int\limits_0^t \hat{\Lambda}(r,x_0) \Big ( \int\limits_{\mathbb{R}^d} \Psi(x)p(r,x_0,t,x)\dif x \Big) \nu_r(\dif x_0) \dif r \Big\vert \\
&+ \sup\limits_{\substack{\Psi \in C_b(\mathbb{R}^d)\\ \Vert \Psi \Vert_{\infty} \leq 1 }} \sum\limits_{j=1}^d \Big \vert \int\limits_{\mathbb{R}^d} \int\limits_0^t \hat{b}_j(r,x_0) \Big ( \int\limits_{\mathbb{R}^d} \Psi(x)\partial_{x_0,j} p(r,x_0,t,x)\dif x\Big ) \nu_r(\dif x_0) \dif r  \Big\vert \\
\leq &\sup\limits_{\substack{\Psi \in C_b(\mathbb{R}^d)\\ \Vert \Psi \Vert_{\infty} \leq 1 }} \int\limits_{\mathbb{R}^d} \int\limits_0^t  \vert \hat{\Lambda}(r,x_0)  \vert \Big ( \int\limits_{\mathbb{R}^d} \underbrace{\vert \Psi(x) \vert}_{\leq 1} p(r,x_0,t,x)\dif x \Big ) \vert \nu_r \vert (\dif x_0)  \dif r  \\
&+ \sup\limits_{\substack{\Psi \in C_b(\mathbb{R}^d)\\ \Vert \Psi \Vert_{\infty} \leq 1 }} \sum\limits_{j=1}^d \int\limits_{\mathbb{R}^d} \int\limits_0^t \vert \hat{b}_j(r,x_0) \vert \Big( \int\limits_{\mathbb{R}^d} \underbrace{\vert\Psi(x) \vert}_{\leq 1} \underbrace{\vert \partial_{x_0,j} p(r,x_0,t,x) \vert}_{\leq C_u \frac{1}{\sqrt{t-r}} q(r,x_0,t,x) \ \ \eqref{eq:1011} } \dif x \Big ) \vert \nu_r(\dif x_0)  \dif r  \\
\leq & \int\limits_{\mathbb{R}^d} \int\limits_0^t \underbrace{\vert  \hat{\Lambda}(r,x_0)\vert }_{\leq M_{\Lambda}}  \underbrace{\int\limits_{\mathbb{R}^d} p(r,x_0,t,x)\dif x}_{=1} \vert \nu_r \vert (\dif x_0)  \dif r  \\
&+C_u \int\limits_{\mathbb{R}^d} \int\limits_0^t \underbrace{\vert  \hat{b}_j(r,x_0)\vert }_{\leq M_{b}} \frac{1}{\sqrt{t-r}} \underbrace{\int\limits_{\mathbb{R}^d}q(r,x_0,t,x) \dif x}_{= 1} 
\vert \nu_r \vert (\dif x_0)  \dif r \\
\leq & C_1 \int\limits_0^t \Big(1+\frac{1}{\sqrt{t-r}}\Big)\int\limits_{\mathbb{R}^d} \vert \nu_r 
\vert (\dif x_0) \dif r \\
=& C_1 \int\limits_0^t \Big(1+\frac{1}{\sqrt{t-r}}\Big) \Vert \nu_r \Vert_{TV} \dif r,\\
\end{align*}
where we have used inequality \eqref{eq:1011}, the fact that $x \mapsto p(s,x_0,t,x)$
 is a probability density and the definition of the total variation norm. For future reference, we note that we have shown
\begin{eqnarray*}
\Vert \nu_t \Vert_{TV} &\leq& C_1 \int\limits_0^t \Big(1+\frac{1}{\sqrt{t-s}}\Big) \Vert 
\nu_s \Vert_{TV} \dif s, \\
&\leq&  C \int\limits_0^t \frac{1}{\sqrt{t-s}} \Vert \nu_s\Vert_{TV} \dif s, \label{eq:unfortunately_not_Gronwall}
\end{eqnarray*}
for $C:= C_1 (T+1)$.

\noindent \textbf{Step 2. Iterating the estimation and Gronwall lemma} \\ \\
Similarly to the estimates from  \eqref{RFTrick} to
\eqref{RFTrickBis}, \eqref{eq:unfortunately_not_Gronwall} allows to prove
 \begin{align}
 \Vert \nu_t \Vert_{TV} \leq \pi C^2  \int\limits_0^t \Vert   \nu_r  \Vert_{TV} \dif r. \label{eq:Gronwall_indeed}
 \end{align}
Applying Grownwall's Lemma on the basis of \eqref{eq:Gronwall_indeed}, we infer that 
$\Vert \nu_t \Vert_{TV}=0$. 
Because this holds for any $t\in [0,T]$ and since $\nu = \mu_1 - \mu_2$, this is exactly the claimed uniqueness of measure-mild solutions. \end{proof}

\subsection{Equivalence weak-mild for the linear PDE}
Similarly as for the nonlinear PDE where we used the notion of weak
solution, see Definition \ref{def_weak_sol}, we  shall also need to introduce the concept of measure-weak solutions
of \eqref{eq:linearized_PDE}.

\begin{defi} \label{def:measure_weak_solution}
 Assume \ref{A1} and that $\hat{b}$ and $\hat{\Lambda}$ are bounded. We then call $\mu :[0,T] \rightarrow \mathcal{M}_f(\mathbb{R}^d)$ a \textbf{measure-weak} solution of \eqref{eq:linearized_PDE} if we have for all $\varphi\in C_0^{\infty}$ 
\begin{align*}
\int\limits_{\mathbb{R}^d} \varphi(x) \mu_t(\dif x)  =&\int\limits_{\mathbb{R}^d} \varphi(x) \mathbf{u_0}(\dif x) +\sum\limits_{j=1}^d \int\limits_{\mathbb{R}^d} \int\limits_0^t \partial_j \varphi (x) \hat{b}(s,x) \mu_s(\dif x) \dif s \\
&+ \int\limits_0^t \int\limits_{\mathbb{R}^d} L_s \varphi(x) \mu_s(\dif x) \dif s+\int\limits_0^t \int\limits_{\mathbb{R}^d} \varphi(x)	\hat{\Lambda}(s,x)\mu_s(\dif x) \dif s.
\end{align*}
\end{defi}

\noindent We can naturally adapt Proposition \ref{prop:equiv_mild_weak_sol} to the linearized case.
\begin{prop}\label{prop:equiv_mild_weak_sol_lin} 
We assume \ref{A1}, \ref{A4}, \ref{C} and that $\hat{b}, \hat{\Lambda}$ are bounded.
Let $ \mu:[0,T] \rightarrow \mathcal{M}_f(\mathbb{R}^d)$. 
$\mu$ is a measure-mild solution of \eqref{eq:linearized_PDE}  if and only if $\mu$ is a measure-weak solution.
\end{prop}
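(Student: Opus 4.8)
The plan is to mimic the proof of Proposition~\ref{prop:equiv_mild_weak_sol} (the nonlinear analogue), adapting the functional framework from $L^1$-functions to finite measures, and using Lemma~\ref{Lemma:equiv_mild_weak_sol_2} as the central computational engine. The key observation is that Lemma~\ref{Lemma:equiv_mild_weak_sol_2} is stated for arbitrary $L^1$ data $\tilde\Lambda,\tilde b$ and already encodes the Fubini/Chapman--Kolmogorov bookkeeping needed to pass between the kernel representation and the test-function (weak) formulation. So the task reduces to (i) reducing the measure-valued statement to the $L^1$-valued one by a time-localization-and-approximation argument, and (ii) invoking Assumption~\ref{C} exactly as in the nonlinear case to go from ``$\mu$ is a weak solution of a \emph{linear} PDE with given coefficients'' back to ``$\mu$ equals the mild candidate''.

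First I would prove the direction \emph{mild $\Rightarrow$ weak}. Let $\mu$ be a measure-mild solution of \eqref{eq:linearized_PDE}. I would like to apply Lemma~\ref{Lemma:equiv_mild_weak_sol_2} with $\tilde\Lambda(s,x)=\hat\Lambda(s,x)$ ``integrated against $\mu_s$'' and $\tilde b(s,x)=\hat b(s,x)$ likewise; the obstacle is that $\mu_s$ is only a measure, not an $L^1$-density, so the lemma does not literally apply. I would circumvent this as follows. Fix $t\in[0,T]$. For $n\in\N$, smooth $\mu_s$ in space by convolution with a mollifier $\rho_n$ on a small preliminary interval; more cleanly, use the semigroup structure. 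Observe from \eqref{eq:measure_mild_sol_def} that for $t>\varepsilon>0$,
\[
\mu_t(\dif x) = \int_{\R^d} p(\varepsilon,x_0,t,x)\,\dif x\;\mu_\varepsilon(\dif x_0) + \cdots,
\]
so $\mu_t$ for $t>0$ automatically has a density $m(t,\cdot)$ with respect to Lebesgue measure (by \eqref{eq:1011a} and \eqref{eq:1011}, the right-hand side of \eqref{eq:measure_mild_sol_def} is absolutely continuous for every $t>0$); moreover $\int_0^T\|m(t,\cdot)\|_{L^1}\,\dif t\le C\,(\mathbf{u_0}(\R^d)+1)\,\|\mu\|_{L^\infty([0,T],TV)}<\infty$ by the same Gaussian bounds and the $\int_r^T (s-r)^{-1/2}\dif s\le 2\sqrt T$ estimate already used in Lemma~\ref{Lemma:equiv_mild_weak_sol_2} and in Lemma~\ref{lem:fixed_point_prep}. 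Hence $m\in L^1([0,T]\times\R^d)$, and $\mu_t(\dif x)=m(t,x)\,\dif x$ for a.e.\ $t$ (for $t=0$ we keep $\mathbf{u_0}$). Now set $\tilde\Lambda(s,x):=\hat\Lambda(s,x)\,m(s,x)$ and $\tilde b_j(s,x):=\hat b_j(s,x)\,m(s,x)$, which lie in $L^1([0,T]\times\R^d)$ since $\hat\Lambda,\hat b$ are bounded. By the measure-mild identity, $m=v$ where $v$ is exactly the function defined in \eqref{eq:mild_sol_def_b_and_lambda_hat}. Lemma~\ref{Lemma:equiv_mild_weak_sol_2} then yields \eqref{eq:weak_sol_def_b_and_lambda_hat}, which is precisely the measure-weak identity of Definition~\ref{def:measure_weak_solution} once one rewrites $\int \tilde b_j(s,x)\partial_j\varphi(x)\,\dif x = \int \hat b_j(s,x)\partial_j\varphi(x)\,\mu_s(\dif x)$ and similarly for $\hat\Lambda$. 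This proves $\mu$ is a measure-weak solution.

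For the converse \emph{weak $\Rightarrow$ mild}, let $\mu$ be a measure-weak solution. Define the candidate measure-mild solution $\lambda$ by the right-hand side of \eqref{eq:measure_mild_sol_def} (with $\mu$ plugged in on the right), i.e.\ $\lambda_t(\dif x):=\int p(0,x_0,t,x)\dif x\,\mathbf{u_0}(\dif x_0)+\int_0^t\!\int \hat\Lambda(r,x_0)p(r,x_0,t,x)\dif x\,\mu_r(\dif x_0)\dif r + \sum_j\int_0^t\!\int \hat b_j(r,x_0)\partial_{x_0,j}p(r,x_0,t,x)\dif x\,\mu_r(\dif x_0)\dif r$. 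As in the mild-$\Rightarrow$-weak direction, $\lambda_t$ for $t>0$ has an $L^1$ density; applying the already-proved first half of the proposition (or directly Lemma~\ref{Lemma:equiv_mild_weak_sol_2} with $\tilde\Lambda(s,x)=\hat\Lambda(s,x)\,\mu_s$-density and $\tilde b$ analogously --- here ``$\mu_s$-density'' makes sense because $\lambda$ is built from $\mu$ and the smoothing kernel, but the cleaner route is to first replace $\mu_r$ by its density argument, noting that in the convolution against $p$ only the measure $\mu_r$ enters and $\hat b,\hat\Lambda$ are bounded so the relevant $L^1$ functions are $\hat\Lambda(\cdot)\mu_\cdot, \hat b(\cdot)\mu_\cdot$ interpreted as the finite measures $\hat\Lambda(r,x_0)\mu_r(\dif x_0)$), one obtains that $\lambda$ is a measure-weak solution of the \emph{same} linear PDE \eqref{eq:linearized_PDE} with the \emph{given} coefficients $\hat b,\hat\Lambda$. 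Therefore $\mu-\lambda$ is a measure-weak solution of \eqref{eq:uniqueness_pde} with zero initial datum: indeed the $\hat b,\hat\Lambda$ terms cancel because $\int\partial_j\varphi\,\hat b_j\,(\mu_s-\lambda_s)(\dif x)$ and $\int\varphi\,\hat\Lambda\,(\mu_s-\lambda_s)(\dif x)$ are linear in the measure, so $\mu-\lambda$ satisfies $\int\varphi\,(\mu_t-\lambda_t)(\dif x)=\int_0^t\int L_s\varphi\,(\mu_s-\lambda_s)(\dif x)\dif s$. By Assumption~\ref{C}, the only such measure-weak solution is $0$, hence $\mu=\lambda$, i.e.\ $\mu$ is a measure-mild solution.

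The main obstacle is the regularity bookkeeping in passing from finite measures to $L^1$-densities so that Lemma~\ref{Lemma:equiv_mild_weak_sol_2} (stated for $L^1$ data) can be applied verbatim: one must verify carefully that the right-hand side of \eqref{eq:measure_mild_sol_def} is genuinely an $L^1([0,T]\times\R^d)$ function, using the Gaussian upper bound \eqref{eq:1011a}, the gradient bound \eqref{eq:1011}, the integrability $\int_r^T(s-r)^{-1/2}\dif s<\infty$, boundedness of $\hat b,\hat\Lambda$, and finiteness of $\sup_{t}\|\mu_t\|_{TV}$ (which itself should be recorded, e.g.\ via the Gronwall-type estimate in Step~1 of the proof of Proposition~\ref{prop:uniqueness_measure_mild}, giving an a priori bound on the total variation of any measure-mild solution). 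Once this absolute-continuity-for-$t>0$ fact is in place, everything else is a line-by-line transcription of the proof of Proposition~\ref{prop:equiv_mild_weak_sol}, with $u(t,x)\dif x$ replaced by $\mu_t(\dif x)$ and ``weak/mild'' replaced by ``measure-weak/measure-mild''.
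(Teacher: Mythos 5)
Your proposal is correct and follows essentially the same route as the paper, whose proof consists precisely in observing that the argument of Proposition~\ref{prop:equiv_mild_weak_sol} (Lemma~\ref{Lemma:equiv_mild_weak_sol_2} for one direction; the mild candidate, cancellation of the source terms, and Assumption~\ref{C} for the other) transcribes verbatim to measures once one uses the test-function characterization of measure-mild solutions in Remark~\ref{rem:measure_mild_def_test}. The only caveat is that in the weak~$\Rightarrow$~mild direction your reduction to the $L^1$ form of Lemma~\ref{Lemma:equiv_mild_weak_sol_2} via densities is not available (a measure-weak solution $\mu$ need not be absolutely continuous), so there one must, as the paper implicitly does, use the measure-valued analogue of the lemma with data $\hat{\Lambda}(s,x)\mu_s(\dif x)$ and $\hat{b}_j(s,x)\mu_s(\dif x)$ --- which is exactly what your parenthetical remark gestures at.
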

\begin{proof}[Proof \nopunct.]
The proof is analogous to the proof of Proposition \ref{prop:equiv_mild_weak_sol}  if one keeps the second statement of Remark \ref{rem:measure_mild_def_test} in mind. 
\end{proof}

%
%

\section{Appendix}
\subsection{A sufficient condition for Assumption 
\ref{A4}}\label{sec:Friedman}

\begin{prop} \label{PAppReg}
Let us suppose the following properties.
\begin{enumerate}
\item For all $i,j \in \{1,\ldots,d\}$ for all $t\in [0,T]$,
 the mapping $a_{ij}(t,\cdot)$ is in $C^2$ and  
the mapping $b_0(t,\cdot)$ is in $C^1$
whose  derivatives
with respect to the space variable $x$ are bounded on 
$[0,T] \times {\mathbb R}^d$.
\item The partial derivatives of order 2 (resp. order 1) for the components of $a$ (resp. $b_0$)  
are H\"older continuous in space, uniformly with respect to time, with some parameter $\alpha$.
\item Assumptions \ref{A1} and \ref{A2} hold.
\end{enumerate}
Then Assumption \ref{A4} is verified. 
In particular
there is a Markov fundamental solution 
of \eqref{eq:rather_implicit}, in the sense of  
 Definition \ref{D3}.
\end{prop}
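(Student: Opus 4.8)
The plan is to reduce the statement to the classical construction of a fundamental solution for a uniformly parabolic operator via the parametrix method, as in \cite[Chapter~1]{friedmanEDP}. First I would rewrite $L^*_t$ from \eqref{eq:formal_adjoint_star} in non-divergence form by expanding the derivatives,
\[
L^*_t\varphi = \tfrac12\sum_{i,j=1}^d a_{ij}\,\partial^2_{ij}\varphi + \sum_{j=1}^d\Big(\sum_{i=1}^d \partial_i a_{ij} - b_{0,j}\Big)\partial_j\varphi + \Big(\tfrac12\sum_{i,j=1}^d\partial^2_{ij}a_{ij} - \sum_{j=1}^d\partial_j b_{0,j}\Big)\varphi ,
\]
and observe that, under the hypotheses of the statement, its second-order coefficients $a_{ij}$ are bounded, uniformly non-degenerate (by \ref{A2}) and of class $C^2$ in space with H\"older-continuous second derivatives uniformly in time; its first-order coefficients $\sum_i\partial_i a_{ij}-b_{0,j}$ are bounded, $C^1$ in space and H\"older continuous; and its zeroth-order coefficient is bounded and H\"older continuous. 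Hence $\partial_t-L^*_t$ meets the standing assumptions under which \cite[Chapter~1]{friedmanEDP} produces a fundamental solution $\Gamma(x,t;x_0,s)$ of $L^*_t u - \partial_t u = 0$ in the sense of \cite{friedmanEDP}, enjoying the Gaussian bounds
\[
\vert\Gamma(x,t;x_0,s)\vert \le C\,(t-s)^{-d/2} e^{-c\vert x-x_0\vert^2/(t-s)}, \qquad \vert\partial_x\Gamma(x,t;x_0,s)\vert \le C\,(t-s)^{-(d+1)/2} e^{-c\vert x-x_0\vert^2/(t-s)},
\]
together with the reproducing identity $\Gamma(y,r;x_0,s)=\int_{\R^d}\Gamma(y,r;x,t)\Gamma(x,t;x_0,s)\dif x$ for $s<t<r$.

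Next I would set $p(s,x_0,t,x):=\Gamma(x,t;x_0,s)$ and verify the requirements of Definition \ref{D3} and of Assumption \ref{A4}. The reproducing identity is precisely the Chapman--Kolmogorov equality \eqref{MarkovFund}, and the first Gaussian bound above is \eqref{eq:1011a}. For the normalisation \eqref{Intequal1}, I would use that $L^*_t$ is in conservative (divergence) form, so that $\int_{\R^d}L^*_t\Gamma(\cdot,t;x_0,s)\dif x=0$ by the Gaussian decay of $\Gamma$ and of its first and second spatial derivatives; together with $\partial_t\Gamma=L^*_t\Gamma$ this gives $\frac{d}{dt}\int_{\R^d}\Gamma(x,t;x_0,s)\dif x=0$, and since $\int_{\R^d}\Gamma(x,t;x_0,s)\dif x\to 1$ as $t\downarrow s$, \eqref{Intequal1} follows. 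Hence $p$ is a Markov fundamental solution, and integrating $\partial_t\Gamma=L^*_t\Gamma$ against a test function $\varphi\in C_0^\infty$ and moving the derivatives onto $\varphi$ yields the distributional formulation \eqref{eq:rather_implicit} of \eqref{eq:FokkerPlanck} for $\nu_s(t,\cdot)=\int_{\R^d}p(s,x_0,t,\cdot)\nu_0(\dif x_0)$, i.e. item 2. of Definition \ref{D3}.

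It then remains to obtain the existence of the distributional spatial derivatives of $x_0\mapsto p(s,x_0,t,x)$ required in \ref{A4}, together with the bound \eqref{eq:1011}. The key point is that, for fixed $(x,t)$, the map $(x_0,s)\mapsto\Gamma(x,t;x_0,s)$ is a fundamental solution of the formal adjoint (backward) equation attached to $(L^*_t)^*=L_t$ (the operator $L_t$ of \eqref{eq:formal_adjoint}); since $a_{ij}$ and $b_{0,j}$ are of class $C^2$, respectively $C^1$, in space with H\"older-continuous top-order derivatives, $L_t$ itself falls under the scope of \cite[Chapter~1]{friedmanEDP}, so the associated fundamental solution is $C^1$ in its spatial pole with $\vert\partial_{x_0}\Gamma(x,t;x_0,s)\vert\le C(t-s)^{-(d+1)/2}e^{-c\vert x-x_0\vert^2/(t-s)}$; after renaming the constants $C_u,c_u$ and rewriting the kernel in the normalised Gaussian form $q$ of \ref{A4}, this is exactly \eqref{eq:1011}. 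This passage to the adjoint equation, which is what controls the dependence of $p$ on the backward variable $x_0$, is the only genuinely delicate step and is precisely where the extra differentiability of the coefficients enters; the remaining verifications are routine bookkeeping with the parametrix estimates. Collecting everything establishes Assumption \ref{A4}, and the concluding assertion that $p$ is a Markov fundamental solution in the sense of Definition \ref{D3} is exactly what the normalisation argument of the second paragraph provides.
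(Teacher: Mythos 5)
Your proposal is correct and follows essentially the same route as the paper: both reduce the statement to Friedman's parametrix construction in Chapter~1 of \cite{friedmanEDP}, take the Gaussian bounds on the fundamental solution and its spatial gradient from the estimates there, control the dependence on the pole $x_0$ through the fundamental solution of the adjoint operator $L_t$ (Friedman's Theorem~15, which is exactly where the extra $C^2$/$C^1$ regularity of $a$ and $b_0$ is used), and obtain the Chapman--Kolmogorov identity \eqref{MarkovFund} from the reproducing property of Friedman's kernel. The only minor difference is the normalisation \eqref{Intequal1}: you argue via the divergence form of $L^*_t$ and conservation of $\int\Gamma\,\dif x$, whereas the paper takes $\varphi=1$ in the weak formulation after a density extension to $C_b$; both are legitimate.
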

\begin{proof}[\nopunct Proof.] 
In  the definition stated in sect. 1, p.3 of \cite{friedmanEDP})
 Friedman considers a notion of fundamental solution of
\begin{eqnarray} 
\partial_t u&=&L_t^*u, \label{Lp} \\
\partial_tu &=& L_tu. \label{Lgamma}
\end{eqnarray}
By Theorems 15, section 9, chap. 1 in \cite{friedmanEDP}
 and inequalities (8.13) and (8.14) just before,
there exist fundamental solutions $p, \Gamma$ in the sense of Friedman
 of \eqref{Lp}, \eqref{Lgamma} fulfilling 
\eqref{eq:1011a} and \eqref{eq:1011}
such that
 \begin{equation}
 \label{eq:Gammap}
 p(s,x_0,t,x)=\Gamma(T-t,x,t-s,x_0)\ .
 \end{equation}
We can verify then that $p$ is a fundamental solution 
 of \eqref{eq:FokkerPlanck} in the sense of Definition \ref{D3}.
The basic argument for this consists essentially in the fact that a smooth
solution of a PDE is also a solution in the sense of distributions;
this establishes \eqref{eq:rather_implicit}. 
By density arguments it is clear that  \eqref{eq:rather_implicit}
also extends to $\varphi \in C_b$.
Taking there $\varphi = 1$ 
for every $\nu_0$ of Delta function type, we get 
\eqref{Intequal1}.
\\
An important point is constituted by the fact that 
this fundamental solution is also a transition probability, i.e.
it verifies \eqref{MarkovFund}:
this is the object of Problem 5, Chapter 1  of \cite{friedmanEDP}.

\end{proof}

\subsection{Existence and uniqueness of a solution to a classical SDE}
\label{subsec_equiv_2}
In this section we emphasize that we will state our results
in two different contexts: in law, using Stroock-Varadhan
 (\cite{stroock}) or Krylov (\cite{krylov})
 arguments for well-posedness of weak solutions of classical SDEs or Veretennikov (\cite{veretennikov1981strong})
  for  strong solutions. 
\\
We fix a measurable function $u:[0,T]\times \mathbb{R}^d \rightarrow \R$.
\begin{lem}
\label{T127}
Let $\Phi: [0,T] \times \R^d \rightarrow \R^{d\times d}$ be such that $a(t,x) = \Phi(t,x) \Phi(t,x)^t$ for all  $(t,x) \in [0,T] \times \R^d.$ 
Let  $\nu$ be a Borel probability measure on $\mathbb R^d$. 
Suppose the validity of Assumptions \ref{A1},~\ref{A2},~\ref{A3}
and that $(t,x) \mapsto b(t,x,u(t,x))$ and 
$(t,x) \mapsto \Lambda(t,x,u(t,x))$ 
are bounded. Then
the SDE
\begin{align}\label{eq:FP_SDE_initial}
\begin{cases}
\dif X_t &= \Phi(t,X_t)\dif B_t + (b_0(t,X_t)+b(t,X_t,u(t,X_t))\dif t, \\
X_0 &\sim {\bf u_0}.
\end{cases}
\end{align}
where $ {\bf u}_0$
 is a probability measure admits weak existence and uniqueness 
in law ($B$ being a Brownian motion). 
\end{lem}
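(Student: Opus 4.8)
The plan is to reduce the well-posedness of \eqref{eq:FP_SDE_initial} to the well-posedness of a time-inhomogeneous martingale problem and then to invoke the Stroock--Varadhan (and Krylov) theory. First I would fix $\Phi$ with $\Phi\Phi^t = a$ as in the statement and set $g(t,x) := b_0(t,x) + b(t,x,u(t,x))$; by \ref{A1} and the standing boundedness hypothesis on $(t,x)\mapsto b(t,x,u(t,x))$, the function $g:[0,T]\times\R^d\to\R^d$ is bounded and Borel measurable (the boundedness hypothesis on $(t,x)\mapsto\Lambda(t,x,u(t,x))$ does not enter here, since $\Lambda$ is absent from \eqref{eq:FP_SDE_initial}; it is kept in the statement only for uniformity with the MFKE framework). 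Introduce the operator $\mathscr{L}_t\varphi(x) := \tfrac12\sum_{i,j=1}^d a_{ij}(t,x)\partial^2_{ij}\varphi(x) + \sum_{j=1}^d g_j(t,x)\partial_j\varphi(x)$, $\varphi\in C_0^\infty$, i.e. $L_t$ from \eqref{eq:formal_adjoint} with $b_0$ replaced by $g$. By the classical equivalence between weak solutions of SDEs and solutions of martingale problems (see e.g. \cite{stroock}), the SDE \eqref{eq:FP_SDE_initial} admits weak existence and uniqueness in law if and only if the martingale problem associated with $(\mathscr{L}_t)_{t\in[0,T]}$ and the prescribed initial law $\mathbf{u_0}$ admits exactly one solution on the canonical path space $C([0,T];\R^d)$. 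Hence it suffices to establish well-posedness of this martingale problem.

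For this, observe that by \ref{A1} both $a$ and $g$ are bounded and measurable, by \ref{A2} the matrix $a$ is uniformly elliptic, and by \ref{A3} the map $x\mapsto a(s,x)$ is continuous, uniformly in $s\in[0,T]$. These are exactly the hypotheses under which the Stroock--Varadhan theory yields existence and uniqueness of solutions to the martingale problem: existence follows from the usual tightness argument together with the Krylov/Calder\'on--Zygmund a priori $L^p$-estimates (see also \cite{krylov}), and uniqueness from the corresponding duality argument, the only regularity required of $a$ being continuity in the spatial variable. The time-dependence is absorbed by appending the time variable to the state, which makes the generator autonomous, so that one may argue as in Theorem 7.2.1 of Chapter 7, Section 2 of \cite{stroock}; equivalently, one first treats the operator $L_t$ of \eqref{eq:formal_adjoint} (with drift $b_0$ only), whose martingale problem is well-posed under \ref{A1}, \ref{A2}, \ref{A3}, and then adds the bounded measurable drift $b(\cdot,\cdot,u(\cdot,\cdot))$ by means of Girsanov's transformation, which preserves well-posedness because $a$ is non-degenerate.

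Finally I would translate the conclusion back to the SDE. Given a solution $\mathbb{P}$ of the martingale problem, note that \ref{A2} forces $\det a(t,x)\ge\mu^d>0$, so the given matrix $\Phi(t,x)$ is invertible with bounded Borel inverse; under the coordinate process $X$ the process $B_t := \int_0^t \Phi(s,X_s)^{-1}\bigl(\dif X_s - g(s,X_s)\,\dif s\bigr)$ is then a continuous local martingale with $\langle B^i,B^j\rangle_t = \delta_{ij}\,t$, hence a $d$-dimensional Brownian motion by L\'evy's characterization, and $(X,B)$ is a weak solution of \eqref{eq:FP_SDE_initial} with $X_0\sim\mathbf{u_0}$; this gives weak existence. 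Conversely, by It\^o's formula every weak solution of \eqref{eq:FP_SDE_initial} induces a solution of the martingale problem for $(\mathscr{L}_t)$, so uniqueness of the latter yields uniqueness in law of the former. The only point demanding some care is to match the weak continuity condition \ref{A3} — which requires continuity of $a$ in $x$ uniformly in $t$ but no regularity of $a$ in time — with the precise hypotheses of the cited well-posedness results in the time-dependent setting; the remaining steps are entirely standard.
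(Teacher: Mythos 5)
Your proposal is correct and follows essentially the same route as the paper: recast \eqref{eq:FP_SDE_initial} as the time-inhomogeneous martingale problem for the operator with diffusion matrix $a$ and bounded measurable drift $b_0+b(\cdot,\cdot,u(\cdot,\cdot))$, observe that Assumptions \ref{A1}, \ref{A2}, \ref{A3} are exactly the hypotheses of Theorem 7.2.1 of \cite{stroock}, and transfer well-posedness back to the SDE. The only difference is cosmetic: the paper cites Corollary 3.4 and Proposition 3.5 in Chapter 5 of \cite{ethier} for the SDE/martingale-problem equivalence, whereas you carry it out directly (using ellipticity to invert $\Phi$ and L\'evy's characterization), which is equally valid.
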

\begin{proof}
We consider the martingale problem associated to 
\begin{align*}
L_t=\frac{1}{2}\sum\limits_{i,j=1}^d a_{ij}(t,x) \partial_{ij}^2 + \sum\limits_{j=1}^d (b_{0,j}(t,x)+b_j(t,x,u(t,x))) \partial_j, 
\end{align*}
as introduced by Stroock and Varadhan in Chapter 6, Section 0 of \cite{stroock}.
We then note that Assumptions \ref{A1}, \ref{A2} and \ref{A3} are exactly the ones required for Theorem 7.2.1 in Chapter 7, Section 2 of \cite{stroock}, according to which our martingale problem is well-posed. 
We can then apply Corollary 3.4 (alongside with Proposition 3.5) in Chapter 5, Section 3 of \cite{ethier}, yielding that SDE \eqref{eq:FP_SDE_initial} admits weak existence and uniqueness in law. 
\end{proof}
\noindent 
\begin{rem}\label{rem:e_without_u}
Note that \ref{A3} is only required for the uniqueness in law of \eqref{eq:FP_SDE_initial}. If we drop \ref{A3} but assume instead that $\Phi$ is non-degenerate, we still have weak existence of a solution to \eqref{eq:FP_SDE_initial}, as can be inferred from Theorem 1 in Chapter 2.6.
in \cite{krylov} and the same arguments as in the proof of
Lemma \ref{T127}. 
Indeed 
it is possible to show that if $a$ is nondegenerate, it is always possible to find a $\Phi$ such that $a=\Phi \Phi^t$ where $\Phi$ is nondegenerate.
\end{rem}
\noindent Under more restrictive assumptions, we have strong existence and pathwise uniqueness of a solution to \eqref{eq:FP_SDE_initial}.
\begin{lem} \label{veretennikov}
Assume \ref{A1}, \ref{A2}, \ref{B3} and that $(t,x) \mapsto b(t,x,u(t,x))$ and 
$(t,x) \mapsto \Lambda(t,x,u(t,x))$ 
are bounded.
 Then \eqref{eq:FP_SDE_initial} 
admits strong existence and pathwise uniqueness.
\end{lem}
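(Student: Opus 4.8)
The plan is to obtain the statement as a direct application of Veretennikov's theorem on strong well-posedness of It\^o SDEs with bounded measurable drift and Lipschitz, uniformly non-degenerate diffusion coefficient, namely Theorem 1 in \cite{veretennikov1981strong}; as indicated in the footnote to Assumption \ref{B3}, the hypotheses imposed here are precisely (a convenient strengthening of) the ones required there. So the proof is a verification of hypotheses followed by a citation.

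First I would absorb the drift of \eqref{eq:FP_SDE_initial} into a single coefficient $\tilde b(t,x) := b_0(t,x) + b(t,x,u(t,x))$. By Assumption \ref{A1}, $b_0$ is bounded and measurable; by hypothesis $(t,x)\mapsto b(t,x,u(t,x))$ is bounded, and it is Borel since $b$ is Borel and $u$ is measurable. Hence $\tilde b:[0,T]\times\R^d\to\R^d$ is bounded and Borel measurable. Next I would record the properties of the diffusion coefficient: by Assumption \ref{B3}, $a=\Phi\Phi^t$ is continuous and $\Phi$ is Lipschitz continuous in space, uniformly in time; by Assumption \ref{A2}, $a$ is uniformly non-degenerate in the sense of \eqref{eq:non_degenerate}. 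These are exactly the ingredients under which Theorem 1 of \cite{veretennikov1981strong} yields strong existence and pathwise uniqueness for the SDE $\dif X_t = \Phi(t,X_t)\dif B_t + \tilde b(t,X_t)\dif t$ with an arbitrarily prescribed initial law. Applying it with initial distribution ${\bf u_0}$ gives the claim; note that the boundedness of $(t,x)\mapsto\Lambda(t,x,u(t,x))$ plays no role in this particular statement and is carried along only for uniformity with the rest of the paper, since $\Lambda$ does not enter \eqref{eq:FP_SDE_initial}.

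I expect no genuine obstacle here: the McKean nonlinearity is invisible once $u$ is frozen, so the equation is a classical SDE with coefficients depending only on $(t,x)$. The only point deserving a line of care is the bookkeeping that the ``simplified'' Assumption \ref{B3}, together with \ref{A1} and \ref{A2}, really does cover the precise assumption set of \cite{veretennikov1981strong} (in particular that continuity of $a$ plus Lipschitz continuity of $\Phi$ suffice, and that no regularity of $\tilde b$ beyond boundedness and measurability is needed). This is the step I would spell out, but it is routine.
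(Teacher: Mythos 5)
Your proposal is correct and matches the paper's own argument, which likewise disposes of the lemma by citing Theorem 1 of \cite{veretennikov1981strong} once the frozen drift $b_0(t,x)+b(t,x,u(t,x))$ is recognized as bounded and measurable and the diffusion coefficient as Lipschitz and non-degenerate. Your additional remarks (the merging of the drift terms and the irrelevance of $\Lambda$ here) are accurate elaborations of the same verification-and-citation route.
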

\begin{proof}[Proof \nopunct.]
The result follows from Theorem 1 in \cite{veretennikov1981strong}, because $b$ is assumed to be bounded and $u$ is assumed to be measurable.

\end{proof}
\begin{rem} \hfill
\begin{itemize}
\item If $d=1$, 
Theorem 1 in \cite{veretennikov1981strong},
 admits some extensions, see Theorem 2 in  \cite{veretennikov1981strong}. 
\item If $\Phi=I_d$ 
 \cite{veretennikov1981strong} result
admits extensions to unbounded drifts, see  \cite{KrylovRoeck}.
\end{itemize}
\end{rem}

{\bf ACKNOWLEDGEMENTS.} 
The research related to this article
was supported by a public grant as part of the
{\it Investissement d'avenir project, reference ANR-11-LABX-0056-LMH,
  LabEx LMH,}
in a joint call with Gaspard Monge Program for optimization, 
operations research and their interactions with data sciences.
Partial financial support of the third named author was  provided
   by the DFG through the CRC ''Taming uncertainty and profiting from 
  randomness and low regularity in analysis, stochastics and their application''.

\bibliographystyle{plain}
\bibliography{NonConservativePDEJonas}

\begin{thebibliography}{10}

\bibitem{aronsonb}
D.~G. Aronson and H.~F. Weinberger.
\newblock Multidimensional nonlinear diffusion arising in population genetics.
\newblock {\em Adv. Math.}, 30:33--76, 1978.

\bibitem{BRR2}
V.~Barbu, M.~R\"ockner, and F.~Russo.
\newblock Probabilistic representation for solutions of an irregular porous
  media type equation: the irregular degenerate case.
\newblock {\em Probab. Theory Related Fields}, 151(1-2):1--43, 2011.

\bibitem{BRR3}
V.~Barbu, M.~R{\"o}ckner, and F.~Russo.
\newblock Doubly probabilistic representation for the stochastic porous media
  type equation.
\newblock {\em Ann. Inst. Henri Poincar\'e Probab. Stat.}, 53(4):2043--2073,
  2017.

\bibitem{BCR3}
N.~Belaribi, F.~Cuvelier, and F.~Russo.
\newblock Probabilistic and deterministic algorithms for space multidimensional
  irregular porous media equation.
\newblock {\em SPDEs: Analysis and Computations}, 1(1):3--62, 2013.

\bibitem{BCR2}
N.~Belaribi and F.~Russo.
\newblock Uniqueness for {F}okker-{P}lanck equations with measurable
  coefficients and applications to the fast diffusion equation.
\newblock {\em Electron. J. Probab.}, 17:no. 84, 28, 2012.

\bibitem{Ben_Vallois}
S.~Benachour, P.~Chassaing, B.~Roynette, and P.~Vallois.
\newblock Processus associ\'es \`a\ l'\'equation des milieux poreux.
\newblock {\em Ann. Scuola Norm. Sup. Pisa Cl. Sci. (4)}, 23(4):793--832, 1996.

\bibitem{BRR1}
P.~Blanchard, M.~R{\"o}ckner, and F.~Russo.
\newblock Probabilistic representation for solutions of an irregular porous
  media type equation.
\newblock {\em Ann. Probab.}, 38(5):1870--1900, 2010.

\bibitem{bogachevSFB}
V.~I. Bogachev, M.~R\"ockner, and S.~V. Shaposhnikov.
\newblock On uniqueness of solutions to the {C}auchy problem for degenerate
  {F}okker-{P}lanck-{K}olmogorov equations.
\newblock {\em J. Evol. Equ.}, 13(3):577--593, 2013.

\bibitem{bossytalay1}
M.~Bossy and D.~Talay.
\newblock Convergence rate for the approximation of the limit law of weakly
  interacting particles: application to the {B}urgers equation.
\newblock {\em Ann. Appl. Probab.}, 6(3):818--861, 1996.

\bibitem{DelarueMenozzi}
F.~Delarue and S.~Menozzi.
\newblock An interpolated stochastic algorithm for quasi-linear {PDE}s.
\newblock {\em Math. Comp.}, 77(261):125--158 (electronic), 2008.

\bibitem{ethier}
S.~N. Ethier and T.~G. Kurtz.
\newblock {\em Markov Processes Characterization and Convergence}.
\newblock Wiley Series in probability and statistics. John Wiley \& Sons, 1986.

\bibitem{figalli}
A.~Figalli.
\newblock Existence and uniqueness of martingale solutions for {SDE}s with
  rough or degenerate coefficients.
\newblock {\em J. Funct. Anal.}, 254(1):109--153, 2008.

\bibitem{friedmanEDP}
A.~Friedman.
\newblock {\em Partial differential equations of parabolic type}.
\newblock Prentice-Hall, Inc., Englewood Cliffs, N.J., 1964.

\bibitem{JourMeleard}
B.~Jourdain and S.~M{\'e}l{\'e}ard.
\newblock Propagation of chaos and fluctuations for a moderate model with
  smooth initial data.
\newblock {\em Ann. Inst. H. Poincar\'e Probab. Statist.}, 34(6):727--766,
  1998.

\bibitem{kac}
M.~Kac.
\newblock {\em Probability and related topics in physical sciences}, volume
  1957 of {\em With special lectures by G. E. Uhlenbeck, A. R. Hibbs, and B.
  van der Pol. Lectures in Applied Mathematics. Proceedings of the Summer
  Seminar, Boulder, Colo.}
\newblock Interscience Publishers, London-New York, 1959.

\bibitem{keener}
J.~P. Keener and J.~Sneyd.
\newblock {\em Mathematical Physiology II: Systems Physiology}.
\newblock Springer, New York, 2008.

\bibitem{krylov}
N.~V. Krylov.
\newblock {\em Controlled diffusion processes}, volume~14 of {\em Stochastic
  Modelling and Applied Probability}.
\newblock Springer-Verlag, Berlin, 2009.
\newblock Translated from the 1977 Russian original by A. B. Aries, Reprint of
  the 1980 edition.

\bibitem{KrylovRoeck}
N.~V. Krylov and M.~R\"ockner.
\newblock Strong solutions of stochastic equations with singular time dependent
  drift.
\newblock {\em Probab. Theory Related Fields}, 131(2):154--196, 2005.

\bibitem{LOR2}
A.~Le~Cavil, N.~Oudjane, and F.~Russo.
\newblock Particle system algorithm and chaos propagation related to a
  non-conservative {M}c{K}ean type stochastic differential equations.
\newblock {\em Stochastics and Partial Differential Equations: Analysis and
  Computation.}, pages 1--37, 2016.

\bibitem{LOR1}
A.~Le~Cavil, N.~Oudjane, and F.~Russo.
\newblock Probabilistic representation of a class of non-conservative nonlinear
  partial differential equations.
\newblock {\em ALEA Lat. Am. J. Probab. Math. Stat}, 13(2):1189--1233, 2016.

\bibitem{LOR4}
A.~Le~Cavil, N.~Oudjane, and F.~Russo.
\newblock Monte-{C}arlo algorithms for a forward {F}eynman--{K}ac-type
  representation for semilinear nonconservative partial differential equations.
\newblock {\em Monte Carlo Methods Appl.}, 24(1):55--70, 2018.

\bibitem{LOR3}
A.~Le~Cavil, N.~Oudjane, and F.~Russo.
\newblock Forward {F}eynman-{K}ac type representation for semilinear
  nonconservative partial differential equations.
\newblock {\em Stochastics: an International Journal of Probability and
  Stochastic Processes}, to appear. First version 2016, Preprint hal-01353757.

\bibitem{Mckeana}
H.~P.~Jr. McKean.
\newblock A class of {M}arkov processes associated with nonlinear parabolic
  equations.
\newblock In {\em Proc. {N}at. {A}cad. {S}ci. {U.S.A}., 1966)}, pages
  1907--1911. 1966.

\bibitem{Mckean}
H.~P.~Jr. McKean.
\newblock Propagation of chaos for a class of non-linear parabolic equations.
\newblock In {\em Stochastic {D}ifferential {E}quations ({L}ecture {S}eries in
  {D}ifferential {E}quations, {S}ession 7, {C}atholic {U}niv., 1967)}, pages
  41--57. Air Force Office Sci. Res., Arlington, Va., 1967.

\bibitem{MeleaRoel}
S.~M{\'e}l{\'e}ard and S.~Roelly-Coppoletta.
\newblock A propagation of chaos result for a system of particles with moderate
  interaction.
\newblock {\em Stochastic Process. Appl.}, 26(2):317--332, 1987.

\bibitem{murray}
J.~D. Murray.
\newblock {\em Mathematical biology. {I}}, volume~17 of {\em Interdisciplinary
  Applied Mathematics}.
\newblock Springer-Verlag, New York, third edition, 2002.
\newblock An introduction.

\bibitem{roeckner2010}
M.~R{\"o}ckner and X.~Zhang.
\newblock Weak uniqueness of {F}okker-{P}lanck equations with degenerate and
  bounded coefficients.
\newblock {\em C. R. Math. Acad. Sci. Paris}, 348(7-8):435--438, 2010.

\bibitem{stroock}
D.~W. Stroock and S.~R.~S. Varadhan.
\newblock {\em Multidimensional diffusion processes}.
\newblock Classics in Mathematics. Springer-Verlag, Berlin, 2006.
\newblock Reprint of the 1997 edition.

\bibitem{sznitman}
A-S. Sznitman.
\newblock Topics in propagation of chaos.
\newblock In {\em \'{E}cole d'\'{E}t\'e de {P}robabilit\'es de {S}aint-{F}lour
  {XIX}---1989}, volume 1464 of {\em Lecture Notes in Math.}, pages 165--251.
  Springer, Berlin, 1991.

\bibitem{veretennikov1981strong}
A.~Ju. Veretennikov.
\newblock On strong solutions and explicit formulas for solutions of stochastic
  integral equations.
\newblock {\em Sbornik: Mathematics}, 39(3):387--403, 1981.

\bibitem{walsh1986introduction}
J.~Walsh.
\newblock An introduction to stochastic partial differential equations.
\newblock {\em {\'E}cole d'{\'E}t{\'e} de Probabilit{\'e}s de Saint Flour
  XIV-1984}, pages 265--439, 1986.

\bibitem{wang}
X.~Y. Wang, Z.~S. Zhu, and Y.~K. Lu.
\newblock Solitary wave solutions of the generalized {B}urgers-{H}uxley
  equation.
\newblock {\em J. Phys. A: Math. Gen.}, 23:271--274, 1990.

\end{thebibliography}
\end{document}